\theoremstyle{plain}
\newtheorem{theorem}{Theorem}
\newtheorem{assertion}[theorem]{Assertion}
\newtheorem{proposition}[theorem]{Proposition}
\theoremstyle{definition}
\newtheorem{definition}[theorem]{Definition}
\newtheorem{conjecture}[theorem]{Conjecture}
\newtheorem{corollary}[theorem]{Corollary}
\theoremstyle{remark}
\newtheorem{remark}[theorem]{Remark}
\newtheorem{example}[theorem]{Example}
\numberwithin{equation}{section}
\numberwithin{theorem}{section}
\renewcommand{\mathfrak}[1]{{\textbf{\upshape #1}}}
\renewcommand{\mathbf}{\bm}
\renewcommand{\mathrm}[1]{\scalebox{1.15}{\textsf{\upshape #1}}}
\renewcommand{\emph}[1]{\textrm{{\upshape #1}}}
\renewcommand{\mathit}[1]{\mathscr #1}
\renewcommand{\mathtt}[1]{\scalebox{1}{\bfseries \texttt{\upshape #1}}}
\numberwithin{equation}{section}
\numberwithin{theorem}{section}
\renewcommand*{\backref}[1]{}
\renewcommand*{\backrefalt}[4]{[{\tiny%
    \ifcase #1 Not cited%
          \or Cited on page~\textcolor{BrickRed}{#2}%
          \else Cited on pages \textcolor{BrickRed}{#2}%
    \fi%
    }]}
\author{\small\scshape S\lowercase{teven} D\lowercase{uplij}}
\address{%
Center for Information Technology (WWU IT),
Universit\"at M\"unster,
R\"ontgenstrasse 7-13\\
D-48149 M\"unster,
Deutschland}
\email{\small \sf douplii@uni-muenster.de; http://ivv5hpp.uni-muenster.de/u/douplii}
\title[\scshape G\lowercase{raded medial ${n}$-ary algebras and polyadic tensor categories}
]{\large\bfseries\scshape G\lowercase{{raded medial} $\lowercase{n}$-ary algebras}
\lowercase{and polyadic tensor categories}}
\subjclass[2010]{16T25, 17A42, 20N15, 20F36, 16E50, 16U80, 18D10, 18D35, 19D23}
\date{\textit{of start} July 17, 2019. \textit{Date}: \textit{of completion} {January 11, 2020}.
\newline
\mbox{}\hskip 1.16em
\textit{Total}: 107 references, 20 diagrams.
}
\renewcommand{\refname}{\textsc{References}}
\let\origsection\section
\renewcommand{\section}[1]{\sectionmark{#1}\origsection{#1}}
\let\origsubsection\subsection
\renewcommand{\subsection}[1]{\subsectionmark{#1}\origsubsection{#1}}
\renewenvironment{thebibliography}[1]{%
  \@xp\origsection\@xp*\@xp{\refname}%
  \normalfont\footnotesize\labelsep .9em\relax
  \renewcommand\theenumiv{\arabic{enumiv}}\let\p@enumiv\@empty
  \vspace*{-5pt}
  \list{\@biblabel{\theenumiv}}{\settowidth\labelwidth{\@biblabel{#1}}%
    \leftmargin\labelwidth \advance\leftmargin\labelsep
    \usecounter{enumiv}}%
  \sloppy \clubpenalty\@M \widowpenalty\clubpenalty
  \sfcode`\.=\@m
}{%
  \def\@noitemerr{\@latex@warning{Empty `thebibliography' environment}}%
  \endlist
}
\begin{document}

\mbox{}
\vspace{1cm}
\maketitle
\mbox{}
\vspace{1cm}

\begin{abstract}

\noindent Algebraic structures in which the property of commutativity is
substituted by the mediality property are introduced. We consider
(associative) graded algebras and instead of almost commutativity (generalized
commutativity or $\varepsilon$-commutativity) we introduce almost mediality
("commutativity-to-mediality" ansatz). Higher graded twisted products and
\textquotedblleft deforming\textquotedblright\ brackets (being the medial
analog of Lie brackets) are defined. Toyoda's theorem which connects
(universal) medial algebras with abelian algebras is proven for the almost
medial graded algebras introduced here. In a similar way we generalize tensor
categories and braided tensor categories. A polyadic (non-strict) tensor
category has an $n$-ary tensor product as an additional multiplication with
$n-1$ associators of the arity $2n-1$ satisfying a $\left(  n^{2}+1\right)
$-gon relation, which is a polyadic analog of the pentagon axiom. Polyadic
monoidal categories may contain several unit objects, and it is also possible
that all objects are units. A new kind of polyadic categories (called groupal)
is defined: they are close to monoidal categories, but may not contain units:
instead the querfunctor and (natural) functorial isomorphisms, the quertors,
are considered (by analogy with the querelements in $n$-ary groups). The
arity-nonreducible $n$-ary braiding is introduced and the equation for it is
derived, which for $n=2$ coincides with the Yang-Baxter equation. Then,
analogously to the first part of the paper, we introduce \textquotedblleft
medialing\textquotedblright\ instead of braiding and construct
\textquotedblleft medialed\textquotedblright\ polyadic tensor categories.

\end{abstract}

\thispagestyle{empty}

\mbox{}

\newpage
\mbox{}
\vspace{1cm}

\tableofcontents

\pagestyle{fancy}
\addtolength{\footskip}{15pt}

\renewcommand{\sectionmark}[1]{%
\markboth{\textmd{
\  \thesection.} { \scshape #1}}{}}
\renewcommand{\subsectionmark}[1]{%
\markright{
\mbox{\;}\\[5pt]
\textmd{#1}}{}}

\fancyhead{}
\fancyhead[EL,OR]{\leftmark}
\fancyhead[ER,OL]{\rightmark}
\fancyfoot[C]{\scshape - \textcolor{BrickRed}{\thepage} \ -}

\renewcommand\headrulewidth{0.5pt}
\fancypagestyle {plain1}{ %
\fancyhf{}
\renewcommand {\headrulewidth }{0pt}
\renewcommand {\footrulewidth }{0pt}
}

\fancypagestyle{plain}{ %
\fancyhf{}
\fancyfoot[C]{\scshape - \thepage \ -}
\renewcommand {\headrulewidth }{0pt}
\renewcommand {\footrulewidth }{0pt}
}

\fancypagestyle{fancyref}{ %
\fancyhf{} 
\fancyhead[C]{\scshape R\lowercase{eferences} }
\fancyfoot[C]{\scshape - \thepage \ -}
\renewcommand {\headrulewidth }{0.5pt}
\renewcommand {\footrulewidth }{0pt}
}

\fancypagestyle{emptyf}{
\fancyhead{}
\fancyfoot[C]{\thepage}
\renewcommand{\headrulewidth}{0pt}
}

\section{\textsc{Introduction}}

The commutativity property and its \textquotedblleft
breaking\textquotedblright\ are quite obvious and unique for binary algebraic
structures, because the permutation group $S_{2}$ has only one non-identity
element. If the operation is $n$-ary however, then one has $n!-1$ non-identity
permutations from $S_{n}$, and the uniqueness is lost. The standard way to
bring uniqueness to an $n$-ary structure is by restricting to a particular
$n$-ary commutation by fixing one chosen permutation using external (sometimes
artificial) criteria. We introduce a different, canonical approach: to use
another property which would be unique by definition, but which can give
commutativity in special cases. Mediality \cite{mur39} (acting on $n^{2}$
elements) is such a property which can be substituted for commutativity
(acting on $n$ elements) in the generators/relations description of $n$-ary
structures. For $n=2$, any medial magma is a commutative monoid, and moreover
for binary groups commutativity immediately follows from mediality.

In the first part of our paper we consider $n$-ary graded algebras and propose
the following idea: instead of considering the non-unique commutativity
property and its \textquotedblleft breaking\textquotedblright, to investigate
the unique property of mediality and its \textquotedblleft
breaking\textquotedblright. We exploit this \textquotedblleft
commutativity-to-mediality\textquotedblright\ ansatz to introduce and study
almost medial $n$-ary graded algebras by analogy with almost commutative
algebras (generalized or $\varepsilon$-commutative graded algebras)
\cite{rit/wyl,sch79}, and $\beta$-commutative algebras \cite{bah/mon/zai}
(see, also, \cite{bon/pij,cov/mic,mor/ovs10}). We prove an analogue of
Toyoda's theorem, which originally connected medial algebras with abelian
algebras \cite{toy41}, for almost medial $n$-ary graded algebras, which we
introduce. Note that almost co-mediality for polyadic bialgebras was
introduced earlier in \cite{dup2018d}. For other (binary) generalizations of
grading, see, e.g. \cite{dal/sba,eld05,nys05}.

The second part of the paper is devoted to a similar consideration of tensor
categories \cite{maclane1,eti/gel/nik/ost}. We define polyadic tensor
categories by considering an $n$-ary tensor product (which may not be iterated
from binary tensor products) and $n$-ary coherence conditions for the
corresponding associators. The peculiarities of polyadic semigroupal and
monoidal categories are studied and the differences from the corresponding
binary tensor categories are outlined. We introduce a new kind of tensor
categories, polyadic nonunital \textquotedblleft groupal\textquotedblright%
\ categories, which contain a \textquotedblleft querfunctor\textquotedblright%
\ and \textquotedblleft quertors\textquotedblright\ (similar to querelements
in $n$-ary groups \cite{dor3,pos}). We introduce arity-nonreducible $n$-ary
braidings and find the equation for them that in the binary case turns into
the Yang-Baxter equation in the tensor product form. Finally, we apply the
\textquotedblleft commutativity-to-mediality\textquotedblright\ ansatz to
braided tensor categories \cite{joy/str1} and introduce \textquotedblleft
medialing\textquotedblright\ and corresponding \textquotedblleft
medialed\textquotedblright\ tensor categories.

The proposed \textquotedblleft commutativity-to-mediality\textquotedblright%
\ ansatz can lead to medial $n$-ary superalgebras and Lie superalgebras, as
well as to a medial analog of noncommutative geometry.

\section{\textsc{Preliminaries}}

The standard way to generalize the commutativity is using graded vector spaces
and corresponding algebras together with the commutation factor defined on
some abelian grading group (see, e.g. \cite{rit/wyl,sch79} and
\cite{bourbaki98,nas/oys04}). First, recall this concept from a slightly
different viewpoint.

\subsection{Binary gradation}

Let $\mathcal{A}\equiv\mathcal{A}^{\left(  2\right)  }=\left\langle A\mid
\mu_{2},\nu_{2};\lambda_{1}\right\rangle $ be an associative (binary) algebra
over a field $\Bbbk$ (having unit $\mathtt{1}\in\Bbbk$ and zero $\mathtt{0}%
\in\Bbbk$) with unit $e$ (i.e. it is a unital $\Bbbk$-algebra) and zero $z\in
A$. Here $A$ is the underling set and $\mu_{2}:A\otimes A\rightarrow A$ is the
(bilinear) binary multiplication (which we write as $\mu_{2}\left[
a,b\right]  $, $a,b\in A$), usually in the binary case denoted by dot $\mu
_{2}\equiv\left(  \cdot\right)  $, and $\nu_{2}:A\otimes A\rightarrow A$ is
the (binary) addition denoted by $\left(  +\right)  $, and a third (linear)
operation $\lambda_{1}$ is the action $\lambda_{1}:K\otimes A\rightarrow A$
(widely called a \textquotedblleft scalar multiplication\textquotedblright,
but this is not always true, as can be seen from the polyadic case
\cite{dup2019}).

Informally, if $\mathcal{A}$ as a vector space can be decomposed into a direct
sum, then one can introduce the \textit{binary gradation concept}: each
element $a\in A$ is endowed by an additional characteristic, its
\textit{gradation} denoted by a prime $a^{\prime}$ showing to which subspace
it belongs, such that $a^{\prime}$ belongs to a discrete abelian group
(initially $\mathbb{N}$ simply to \textquotedblleft
enumerate\textquotedblright\ the subspaces, and this can be further
generalized to a commutative semigroup). This group is called the
\textit{binary grading group} $\mathcal{G}=\left\langle G,\nu_{2}^{\prime
}\right\rangle $, and usually its operation is written as plus $\nu
_{2}^{\prime}\equiv\left(  +^{\prime}\right)  $, and the neutral element by
$0^{\prime}$. Denote the subset of \textit{homogeneous elements} of degree
$a^{\prime}\in G$ by $A_{a^{\prime}}$ \cite{sch79,dade80}.

\begin{definition}
An associative algebra $\mathcal{A}$ is called a \textit{binary graded
algebra} over $\Bbbk$ (or $G$-algebra $\mathcal{A}_{G}$), if the algebra
multiplication $\mu_{2}$ respects the gradation i.e.%
\begin{equation}
\mu_{2}\left[  A_{a^{\prime}},A_{b^{\prime}}\right]  \equiv A_{a^{\prime}%
}\cdot A_{b^{\prime}}\subseteq A_{a^{\prime}+^{\prime}b^{\prime}%
},\ \ \ \ \ \ \forall a^{\prime},b^{\prime}\in G, \label{maa}%
\end{equation}
where equality corresponds to \textit{ strong gradation}.
\end{definition}

If there exist invertible elements of each degree $a^{\prime}\in G$, then
$\mathcal{A}$ is called a \textit{cross product}, and if all non-zero
homogeneous elements are invertible, $\mathcal{A}$ is a \textit{graded
division algebra} \cite{dade80}. Homogeneous (binary) morphisms $\varphi
:\mathcal{A}_{G}\rightarrow\mathcal{B}_{G}$ preserve the grading
$\varphi\left(  A_{a^{\prime}}\right)  \subset B_{a^{\prime}},\forall
a^{\prime}\in G$, and the kernel of $\varphi$ is an homogeneous ideal. The
corresponding class of $G$-algebras and the homogeneous morphisms form a
category of $G$-algebras $G$-$\mathtt{Alg}$ (for details, see, e.g.
\cite{bourbaki98,dade80}).

\subsection{Almost commutativity}

The graded algebras have a rich multiplicative structure, because of the
possibility to deform (or twist) the algebra product $\mu_{2}$ by a function
depending on the gradation. Let us consider the \textit{twisting function
}(\textit{twist factor}) $\tau:G\times G\rightarrow\Bbbk$.

\begin{definition}
A \textit{twisted graded product} $\mu_{2}^{\left(  \tau\right)  }$ is defined
for homogeneous elements by%
\begin{equation}
\mu_{2}^{\left(  \tau\right)  }\left[  a,b\right]  =\tau\left(  a^{\prime
},b^{\prime}\right)  \mu_{2}\left[  a,b\right]  ,\ \ \ a,b\in A;\ \ a^{\prime
},b^{\prime}\in G. \label{mt}%
\end{equation}

\end{definition}

\begin{proposition}
If the twisted algebra $\left\langle A\mid\mu_{2}^{\left(  \tau\right)
}\right\rangle $ is associative, then the twisting function becomes a
2-cocycle $\tau\mapsto\sigma:G\times G\rightarrow\Bbbk^{\times}$ on the
abelian group $G$ satisfying%
\begin{equation}
\sigma\left(  a^{\prime},b^{\prime}\right)  \sigma\left(  a^{\prime}%
+b^{\prime},c^{\prime}\right)  =\sigma\left(  a^{\prime},b^{\prime}+c^{\prime
}\right)  \sigma\left(  b^{\prime},c^{\prime}\right)  ,\ \ \ a^{\prime
},b^{\prime},c^{\prime}\in G. \label{s}%
\end{equation}

\end{proposition}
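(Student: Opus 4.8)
The plan is to expand the associativity condition for $\mu_2^{(\tau)}$ on homogeneous elements and read off the constraint on $\tau$. First I would take homogeneous $a,b,c\in A$ with degrees $a',b',c'\in G$ and compute both iterated products. Using \eqref{mt} twice together with the fact that $\mu_2$ respects the gradation (so $\mu_2[a,b]$ is homogeneous of degree $a'+'b'$), one gets
\begin{equation}
\mu_2^{(\tau)}\bigl[\mu_2^{(\tau)}[a,b],c\bigr]
=\tau(a'+'b',c')\,\tau(a',b')\,\mu_2\bigl[\mu_2[a,b],c\bigr],
\end{equation}
and symmetrically
\begin{equation}
\mu_2^{(\tau)}\bigl[a,\mu_2^{(\tau)}[b,c]\bigr]
=\tau(a',b'+'c')\,\tau(b',c')\,\mu_2\bigl[a,\mu_2[b,c]\bigr].
\end{equation}
Since $\mathcal{A}$ itself is associative, $\mu_2[\mu_2[a,b],c]=\mu_2[a,\mu_2[b,c]]$, so assuming associativity of the twisted product forces
\begin{equation}
\tau(a',b')\,\tau(a'+'b',c')\,\mu_2[a,\mu_2[b,c]]
=\tau(a',b'+'c')\,\tau(b',c')\,\mu_2[a,\mu_2[b,c]].
\end{equation}

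Next I would argue that the scalar factors must agree whenever the common product $\mu_2[a,\mu_2[b,c]]$ is nonzero, i.e. for all triples of degrees that are actually realized by nonzero homogeneous elements with nonzero product. To make the statement clean one restricts to the part of the grading group that is "supported" (or simply assumes the algebra is graded in such a way that no degree is vacuous and no such product vanishes — this is the standard convention, and $\Bbbk$ being a field lets us cancel), obtaining \eqref{s} as an identity of elements of $\Bbbk$. Setting $c'=0'$ (the neutral degree) and using a unit of degree $0'$ shows $\tau(a',b')\,\tau(a'+'b',0')=\tau(a',b')\,\tau(b',0')$, hence $\tau(a'+'b',0')=\tau(b',0')$ for all $a',b'$, so $\tau(\cdot,0')$ is constant; a symmetric argument at $a'=0'$ gives $\tau(0',\cdot)$ constant, and normalizing (rescaling by this constant, or composing with a coboundary) one may take these values to be $\mathtt{1}$, so in particular $\tau$ never vanishes, i.e. $\tau$ takes values in $\Bbbk^\times$; rename it $\sigma$. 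Finally, commutativity of $G$ enters only to make the relation symmetric under the natural relabelings and is what identifies \eqref{s} as the standard (multiplicative) $2$-cocycle condition on the abelian group $G$.

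The main obstacle is the cancellation step: a priori $\mu_2[a,\mu_2[b,c]]$ could be zero for some homogeneous $a,b,c$ even when none of them is zero (the algebra need not be a graded division algebra), and then associativity of the twisted product imposes no constraint for that triple of degrees. The honest fix is to phrase the conclusion over the support of the gradation, or to assume (as is implicit in the statement) that $\mathcal{A}$ is nondegenerate enough — e.g. strongly graded, cf. \eqref{maa} with equality, or that each $A_{a'}\neq 0$ and products of homogeneous elements of complementary degree are nonzero — so that for every triple $a',b',c'\in G$ one can choose witnesses making the common factor invertible in $\Bbbk$. Granting that, the derivation above is routine algebra, and the only genuinely used hypotheses are: bilinearity of $\mu_2$, that $\mu_2$ respects the grading, associativity of $\mu_2$, and associativity of $\mu_2^{(\tau)}$; the abelian-ness of $G$ is used purely cosmetically to write $+$ without worrying about order.
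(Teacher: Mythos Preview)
Your proof is correct and follows exactly the approach of the paper, whose entire proof is the single line ``The result follows from the binary associativity condition for $\mu_{2}^{(\sigma)}$''; you have simply unpacked that line. Your careful discussion of the cancellation subtlety (the possibility that $\mu_2[a,\mu_2[b,c]]=0$) is a legitimate point that the paper glosses over.
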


\begin{proof}
The result follows from the binary associativity condition for $\mu
_{2}^{\left(  \sigma\right)  }$.
\end{proof}

\begin{example}
An example of a solution to the functional equation (\ref{s}) is
$\sigma\left(  a^{\prime},b^{\prime}\right)  =\left(  \exp\left(  a^{\prime
}\right)  \right)  ^{b^{\prime}}$.
\end{example}

The classes of $\sigma$ form the (Schur) multiplier group \cite{sch79}, and
for further properties of $\sigma$ and a connection with the cohomology
classes $H^{2}\left(  G,\Bbbk\right)  $, see, e.g., \cite{cov/mic}.

In general, the twisted product (\ref{mt}) can be any polynomial in algebra
elements. Nevertheless, the special cases where $\mu_{2}^{\left(
\varepsilon_{0}\right)  }\left[  a,b\right]  $ becomes a fixed expression for
elements $a,b\in A$ are important.

\begin{definition}
If the twisted product coincides with the opposite product for all $a,b\in A$,
we call the twisting function a \textit{ 0-level} \textit{commutation factor
}$\tau\mapsto\varepsilon_{0}:G\times G\rightarrow\Bbbk^{\times}$, such that%
\begin{equation}
\mu_{2}^{\left(  \varepsilon_{0}\right)  }\left[  a,b\right]  =\mu_{2}\left[
b,a\right]  ,\ \ \text{or\ \ }\varepsilon_{0}\left(  a^{\prime},b^{\prime
}\right)  a\cdot b=b\cdot a,\ \ \ \forall a,b\in A,\ \ \ a^{\prime},b^{\prime
}\in G. \label{e0}%
\end{equation}

\end{definition}

\begin{definition}
A binary algebra $\mathcal{A}_{2}^{\left(  \varepsilon_{0}\right)  }$ for
which the twisted product coincides with the opposite product (\ref{e0}), is
called $0$\textit{-level almost commutative} ($\varepsilon_{0}$-commutative).
\end{definition}

\begin{assertion}
If the algebra for which (\ref{e0}) takes place is associative, the 0-level
commutation factor $\varepsilon_{0}$ satisfies the relations%
\begin{align}
\varepsilon_{0}\left(  a^{\prime},b^{\prime}\right)  \varepsilon_{0}\left(
b^{\prime},a^{\prime}\right)   &  =\mathtt{1},\label{e01}\\
\varepsilon_{0}\left(  a^{\prime}+b^{\prime},c^{\prime}\right)   &
=\varepsilon_{0}\left(  a^{\prime},c^{\prime}\right)  \varepsilon_{0}\left(
b^{\prime},c^{\prime}\right)  ,\label{e02}\\
\varepsilon_{0}\left(  a^{\prime},b^{\prime}+c^{\prime}\right)   &
=\varepsilon_{0}\left(  a^{\prime},b^{\prime}\right)  \varepsilon_{0}\left(
a^{\prime},c^{\prime}\right)  ,\ \ \ a^{\prime},b^{\prime},c^{\prime
},d^{\prime}\in G. \label{e03}%
\end{align}

\end{assertion}

\begin{proof}
The first relation (\ref{e01}) follows from permutation in (\ref{e0}) twice.
The next ones follow from permutation in two ways: for (\ref{e02}) $a\cdot
b\cdot c\mapsto a\cdot c\cdot b\mapsto c\cdot a\cdot b$ and $\left(  a\cdot
b\right)  \cdot c\mapsto c\cdot\left(  a\cdot b\right)  $, and for (\ref{e03})
$a\cdot b\cdot c\mapsto b\cdot a\cdot c\mapsto b\cdot c\cdot a$ and
$a\cdot\left(  b\cdot c\right)  \mapsto\left(  b\cdot c\right)  \cdot a$,
using (\ref{e0}).
\end{proof}

In a more symmetric form this is%
\begin{equation}
\varepsilon_{0}\left(  a^{\prime}+b^{\prime},c^{\prime}+d^{\prime}\right)
=\varepsilon_{0}\left(  a^{\prime},c^{\prime}\right)  \varepsilon_{0}\left(
b^{\prime},c^{\prime}\right)  \varepsilon_{0}\left(  a^{\prime},d^{\prime
}\right)  \varepsilon_{0}\left(  b^{\prime},d^{\prime}\right)  . \label{e00}%
\end{equation}

The following general expression%
\begin{equation}
\varepsilon_{0}\left(  \sum\limits_{i_{a}=1}^{j_{a}}a_{i_{a}}^{\prime}%
,\sum\limits_{i_{b}=1}^{j_{b}}b_{i_{b}}^{\prime}\right)  =\prod\limits_{i_{a}%
=1}^{j_{a}}\prod\limits_{i_{b}=1}^{j_{b}}\varepsilon_{0}\left(  a_{i_{a}%
}^{\prime},b_{i_{b}}^{\prime}\right)  ,\ \ \ a_{i_{a}}^{\prime},b_{i_{b}%
}^{\prime}\in G,\ \ \ i_{a},i_{b},j_{a},j_{b}\in\mathbb{N},
\end{equation}
can be written. In the case of equal elements we have%
\begin{equation}
\varepsilon_{0}\left(  j_{a}a^{\prime},j_{b}b^{\prime}\right)  =\left(
\varepsilon_{0}\left(  a^{\prime},b^{\prime}\right)  \right)  ^{j_{a}j_{b}}.
\end{equation}

\begin{remark}
\label{rem-estand}Recall that the standard commutation factor $\varepsilon
:G\times G\rightarrow\Bbbk^{\times}$ of an almost commutative ($\varepsilon
$-commutative or $\varepsilon$-symmetric) associative algebra is defined in a
different way \cite{rit/wyl,sch79}%
\begin{equation}
\varepsilon\left(  a^{\prime},b^{\prime}\right)  b\cdot a=a\cdot b. \label{e}%
\end{equation}
Comparing with (\ref{e0}) we have%
\begin{equation}
\varepsilon_{0}\left(  a^{\prime},b^{\prime}\right)  =\varepsilon\left(
b^{\prime},a^{\prime}\right)  ,\ \ \ \forall a^{\prime},b^{\prime}\in G.
\label{ee0}%
\end{equation}

\end{remark}

\subsection{Tower of higher level commutation brackets}

Let us now construct the tower of higher level commutation factors and
brackets using the following informal reasoning. We \textquotedblleft
deform\textquotedblright\ the almost commutativity relation (\ref{e0}) by a
function $L_{0}:A\times A\rightarrow A$ as%
\begin{equation}
\varepsilon_{0}\left(  a^{\prime},b^{\prime}\right)  a\cdot b=b\cdot
a+L_{0}^{\left(  \varepsilon_{0}\right)  }\left(  a,b\right)  ,\ \ \ \forall
a,b\in A,\ \ \ a^{\prime},b^{\prime}\in G, \label{L0}%
\end{equation}
where $\varepsilon_{0}\left(  a^{\prime},b^{\prime}\right)  $ is the $0$-level
commuting factor satisfying (\ref{e01})--(\ref{e03}).

Consider the function (bracket) $L_{0}^{\left(  \varepsilon_{0}\right)
}\left(  a,b\right)  $ as a multiplication of a new algebra%
\begin{equation}
\mathcal{A}_{2}^{L_{0}}=\left\langle A\mid\mu_{2}^{\left(  \varepsilon
_{0},L_{0}\right)  }=L_{0}^{\left(  \varepsilon_{0}\right)  }\left(
a,b\right)  \right\rangle \label{al}%
\end{equation}
called a $0$\textit{-level bracket algebra}. Then (\ref{L0}) can be treated as
its \textquotedblleft representation\textquotedblright\ by the associative
algebra $\mathcal{A}$.

\begin{proposition}
The algebra $\mathcal{A}_{2}^{L_{0}}$ is almost commutative with the
commutation factor $\left(  -\varepsilon_{0}^{-1}\right)  $.
\end{proposition}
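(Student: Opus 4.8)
The plan is to unwind the definition of the $0$-level bracket from (\ref{L0}) and then verify the almost commutativity relation (\ref{e0}) for the new algebra $\mathcal{A}_{2}^{L_{0}}$ by a direct computation, with the only input being the first relation (\ref{e01}).

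First I would rewrite (\ref{L0}) as an explicit expression for the multiplication of $\mathcal{A}_{2}^{L_{0}}$ on homogeneous elements,
\[
\mu_{2}^{\left(  \varepsilon_{0},L_{0}\right)  }\left[  a,b\right]  =L_{0}^{\left(  \varepsilon_{0}\right)  }\left(  a,b\right)  =\varepsilon_{0}\left(  a^{\prime},b^{\prime}\right)  a\cdot b-b\cdot a,\qquad a,b\in A,\ \ a^{\prime},b^{\prime}\in G .
\]
Since $\mu_{2}$ respects the gradation, both $a\cdot b$ and $b\cdot a$ lie in $A_{a^{\prime}+b^{\prime}}$, hence so does $L_{0}^{\left(  \varepsilon_{0}\right)  }\left(  a,b\right)  $; thus $\mathcal{A}_{2}^{L_{0}}$ is itself a $G$-graded algebra (with the same grading group $\mathcal{G}$), and the question of a commutation factor on it is meaningful. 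Bilinearity of $\mu_{2}^{\left(  \varepsilon_{0},L_{0}\right)  }$ is inherited from that of $\mu_{2}$.

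Next I would record that, by (\ref{e01}), $\varepsilon_{0}^{-1}\left(  a^{\prime},b^{\prime}\right)  =\varepsilon_{0}\left(  b^{\prime},a^{\prime}\right)  $, so the proposed factor is $\left(  -\varepsilon_{0}^{-1}\right)  \left(  a^{\prime},b^{\prime}\right)  =-\varepsilon_{0}\left(  b^{\prime},a^{\prime}\right)  $, which indeed takes values in $\Bbbk^{\times}$. Then the verification of (\ref{e0}) for $\mu_{2}^{\left(  \varepsilon_{0},L_{0}\right)  }$ is the one-line computation
\begin{align*}
\bigl(-\varepsilon_{0}^{-1}\bigr)\left(  a^{\prime},b^{\prime}\right)  \mu_{2}^{\left(  \varepsilon_{0},L_{0}\right)  }\left[  a,b\right]
&=-\varepsilon_{0}\left(  b^{\prime},a^{\prime}\right)  \left(  \varepsilon_{0}\left(  a^{\prime},b^{\prime}\right)  a\cdot b-b\cdot a\right) \\
&=-a\cdot b+\varepsilon_{0}\left(  b^{\prime},a^{\prime}\right)  b\cdot a=\mu_{2}^{\left(  \varepsilon_{0},L_{0}\right)  }\left[  b,a\right]  ,
\end{align*}
where we used $\varepsilon_{0}\left(  b^{\prime},a^{\prime}\right)  \varepsilon_{0}\left(  a^{\prime},b^{\prime}\right)  =\mathtt{1}$. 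This is precisely the defining relation (\ref{e0}) for the algebra $\left\langle A\mid\mu_{2}^{\left(  \varepsilon_{0},L_{0}\right)  }\right\rangle $ with $0$-level commutation factor $-\varepsilon_{0}^{-1}$, which establishes the claim.

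I expect essentially no obstacle here, since the statement reduces to a direct computation; the only point that requires a little care is the interpretation of $-\varepsilon_{0}^{-1}$ and its identification with the ``swapped'' factor $-\varepsilon_{0}\left(  b^{\prime},a^{\prime}\right)  $ via (\ref{e01}). Note also that one should not attempt to derive the bicharacter-type relations (\ref{e02})--(\ref{e03}) for $-\varepsilon_{0}^{-1}$: those follow from associativity in the Assertion above, whereas $\mathcal{A}_{2}^{L_{0}}$ need not be associative, and the proposition asserts only the almost commutativity relation (\ref{e0}).
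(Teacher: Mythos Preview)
Your proof is correct and follows essentially the same direct computation as the paper: both expand $L_{0}^{(\varepsilon_{0})}(a,b)$ from (\ref{L0}), use (\ref{e01}) to identify $\varepsilon_{0}^{-1}(a',b')=\varepsilon_{0}(b',a')$, and then verify $(-\varepsilon_{0}^{-1})(a',b')\,L_{0}^{(\varepsilon_{0})}(a,b)=L_{0}^{(\varepsilon_{0})}(b,a)$. Your observation that only (\ref{e01}) is actually needed (the paper cites (\ref{e01})--(\ref{e03}) but uses only the first) and your closing remark that (\ref{e02})--(\ref{e03}) should not be expected for the bracket since $\mathcal{A}_{2}^{L_{0}}$ need not be associative are both apt refinements.
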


\begin{proof}
Using (\ref{L0}) and (\ref{e01})--(\ref{e03}) we get $\varepsilon_{0}\left(
b^{\prime},a^{\prime}\right)  L_{0}^{\left(  \varepsilon_{0}\right)  }\left(
a,b\right)  +L_{0}^{\left(  \varepsilon_{0}\right)  }\left(  b,a\right)  =0$,
which can be rewritten in the almost commutativity form (\ref{e0}) as $\left(
-\varepsilon_{0}\left(  b^{\prime},a^{\prime}\right)  \right)  L_{0}^{\left(
\varepsilon_{0}\right)  }\left(  a,b\right)  =L_{0}^{\left(  \varepsilon
_{0}\right)  }\left(  b,a\right)  $. It follows from (\ref{e01}) that%
\begin{equation}
\left(  -\varepsilon_{0}^{-1}\left(  a^{\prime},b^{\prime}\right)  \right)
L_{0}^{\left(  \varepsilon_{0}\right)  }\left(  a,b\right)  =L_{0}^{\left(
\varepsilon_{0}\right)  }\left(  b,a\right)  . \label{el}%
\end{equation}

\end{proof}

The triple identity for $L_{0}^{\left(  \varepsilon_{0}\right)  }\left(
a,b\right)  $ can be obtained using (\ref{e01})--(\ref{e03}), (\ref{L0})\ and
(\ref{el})%
\begin{align}
&  \varepsilon_{0}\left(  c^{\prime},a^{\prime}\right)  L_{0}^{\left(
\varepsilon_{0}\right)  }\left(  L_{0}^{\left(  \varepsilon_{0}\right)
}\left(  a,b\right)  ,c\right)  +\varepsilon_{0}\left(  a^{\prime},b^{\prime
}\right)  L_{0}^{\left(  \varepsilon_{0}\right)  }\left(  L_{0}^{\left(
\varepsilon_{0}\right)  }\left(  b,c\right)  ,a\right) \label{j0}\\
&  +\varepsilon_{0}\left(  b^{\prime},c^{\prime}\right)  L_{0}^{\left(
\varepsilon_{0}\right)  }\left(  L_{0}^{\left(  \varepsilon_{0}\right)
}\left(  c,a\right)  ,b\right)  =0,\ \ \ \ \forall a,b,c\in A,\ \ \ a^{\prime
},b^{\prime},c^{\prime}\in G.\nonumber
\end{align}

In the more symmetric form using (\ref{e00}) we have{\tiny
\begin{align}
&  \varepsilon_{0}\left(  c^{\prime},b^{\prime}\right)  \varepsilon_{0}\left(
d^{\prime},a^{\prime}\right)  L_{0}^{\left(  \varepsilon_{0}\right)  }\left(
L_{0}^{\left(  \varepsilon_{0}\right)  }\left(  a,b\right)  ,L_{0}^{\left(
\varepsilon_{0}\right)  }\left(  c,d\right)  \right)  +\varepsilon_{0}\left(
d^{\prime},c^{\prime}\right)  \varepsilon_{0}\left(  a^{\prime},b^{\prime
}\right)  L_{0}^{\left(  \varepsilon_{0}\right)  }\left(  L_{0}^{\left(
\varepsilon_{0}\right)  }\left(  b,c\right)  ,L_{0}^{\left(  \varepsilon
_{0}\right)  }\left(  d,a\right)  \right) \nonumber\\
&  \varepsilon_{0}\left(  a^{\prime},d^{\prime}\right)  \varepsilon_{0}\left(
b^{\prime},c^{\prime}\right)  L_{0}^{\left(  \varepsilon_{0}\right)  }\left(
L_{0}^{\left(  \varepsilon_{0}\right)  }\left(  c,d\right)  ,L_{0}^{\left(
\varepsilon_{0}\right)  }\left(  a,b\right)  \right)  +\varepsilon_{0}\left(
b^{\prime},a^{\prime}\right)  \varepsilon_{0}\left(  c^{\prime},d^{\prime
}\right)  L_{0}^{\left(  \varepsilon_{0}\right)  }\left(  L_{0}^{\left(
\varepsilon_{0}\right)  }\left(  d,a\right)  ,L_{0}^{\left(  \varepsilon
_{0}\right)  }\left(  b,c\right)  \right)  =0. \label{j1}%
\end{align}
}By analogy with (\ref{L0}) we successively further \textquotedblleft
deform\textquotedblright\ (\ref{el}) then introduce \textquotedblleft
deforming\textquotedblright\ functions and higher level commutation factors in
the following way.

\begin{definition}
The $k$-level commutation factor $\varepsilon_{k}\left(  a^{\prime},b^{\prime
}\right)  $ is defined by the following \textquotedblleft
difference-like\textquotedblright\ equations%
\begin{align}
\varepsilon_{1}\left(  a^{\prime},b^{\prime}\right)  L_{0}^{\left(
\varepsilon_{0}\right)  }\left(  a,b\right)   &  =L_{0}^{\left(
\varepsilon_{0}\right)  }\left(  b,a\right)  +L_{1}^{\left(  \varepsilon
_{0},\varepsilon_{1}\right)  }\left(  a,b\right)  ,\label{e1}\\
\varepsilon_{2}\left(  a^{\prime},b^{\prime}\right)  L_{1}^{\left(
\varepsilon_{0},\varepsilon_{1}\right)  }\left(  a,b\right)   &
=L_{1}^{\left(  \varepsilon_{0},\varepsilon_{1}\right)  }\left(  b,a\right)
+L_{2}^{\left(  \varepsilon_{0},\varepsilon_{1},\varepsilon_{2}\right)
}\left(  a,b\right)  ,\\
&  \vdots\nonumber\\
\varepsilon_{k}\left(  a^{\prime},b^{\prime}\right)  L_{k-1}^{\left(
\varepsilon_{0},\varepsilon_{1},\ldots,\varepsilon_{k-1}\right)  }\left(
a,b\right)   &  =L_{k-1}^{\left(  \varepsilon_{0},\varepsilon_{1}%
,\ldots,\varepsilon_{k-1}\right)  }\left(  b,a\right)  +L_{k}^{\left(
\varepsilon_{0},\varepsilon_{1},\ldots,\varepsilon_{k}\right)  }\left(
a,b\right)  . \label{ek}%
\end{align}

\end{definition}

\begin{definition}
$k$-level almost commutativity is defined by the vanishing of the last
\textquotedblleft deforming\textquotedblright\ function%
\begin{equation}
L_{k}^{\left(  \varepsilon_{0},\varepsilon_{1},\ldots,\varepsilon_{k}\right)
}\left(  a,b\right)  =0,\ \ \ \forall a,b\in A,
\end{equation}
and can be expressed in a form analogous to (\ref{e0})%
\begin{equation}
\varepsilon_{k}\left(  a^{\prime},b^{\prime}\right)  L_{k-1}^{\left(
\varepsilon_{0},\varepsilon_{1},\ldots,\varepsilon_{k-1}\right)  }\left(
a,b\right)  =L_{k-1}^{\left(  \varepsilon_{0},\varepsilon_{1},\ldots
,\varepsilon_{k-1}\right)  }\left(  b,a\right)  .
\end{equation}

\end{definition}

\begin{proposition}
All higher level \textquotedblleft deforming\textquotedblright\ functions
$L_{i}^{\left(  \varepsilon_{0},\varepsilon_{1},\ldots,\varepsilon_{i}\right)
}$, $i=1,\ldots,k$ can be expressed through $L_{0}^{\left(  \varepsilon
_{0}\right)  }\left(  a,b\right)  $ from (\ref{L0}) multiplied by a
combination of the lower level commutation factors $\varepsilon_{i}\left(
a^{\prime},b^{\prime}\right)  $, $i=1,\ldots,k$.
\end{proposition}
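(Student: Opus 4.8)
The plan is to prove, by induction on the level $i$, the sharper statement that there is a function $\phi_i\colon G\times G\rightarrow\Bbbk^{\times}$, expressible as a Laurent polynomial (in particular, a combination) in the commutation factors $\varepsilon_0,\varepsilon_1,\ldots,\varepsilon_i$, such that
\[
L_i^{\left(\varepsilon_0,\varepsilon_1,\ldots,\varepsilon_i\right)}\left(a,b\right)=\phi_i\left(a^{\prime},b^{\prime}\right)L_0^{\left(\varepsilon_0\right)}\left(a,b\right),\qquad\forall a,b\in A,\ \ a^{\prime},b^{\prime}\in G.
\]
The two ingredients are: the defining relations (\ref{e1})--(\ref{ek}), rewritten as $L_k^{\left(\ldots\right)}\left(a,b\right)=\varepsilon_k\left(a^{\prime},b^{\prime}\right)L_{k-1}^{\left(\ldots\right)}\left(a,b\right)-L_{k-1}^{\left(\ldots\right)}\left(b,a\right)$; and the level-$0$ skew relation (\ref{el}) in the form $L_0^{\left(\varepsilon_0\right)}\left(b,a\right)=-\varepsilon_0^{-1}\left(a^{\prime},b^{\prime}\right)L_0^{\left(\varepsilon_0\right)}\left(a,b\right)$, which lets one collapse an argument swap of $L_0$ back to a scalar multiple of $L_0^{\left(\varepsilon_0\right)}\left(a,b\right)$.

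For the base case I would set $\phi_0\equiv\mathtt{1}$ (so the claim holds trivially for $i=0$) and then substitute (\ref{el}) into (\ref{e1}), giving $L_1^{\left(\varepsilon_0,\varepsilon_1\right)}\left(a,b\right)=\left(\varepsilon_1\left(a^{\prime},b^{\prime}\right)+\varepsilon_0^{-1}\left(a^{\prime},b^{\prime}\right)\right)L_0^{\left(\varepsilon_0\right)}\left(a,b\right)$, i.e. $\phi_1=\varepsilon_1+\varepsilon_0^{-1}$. For the inductive step, assuming $L_{i-1}^{\left(\ldots\right)}\left(a,b\right)=\phi_{i-1}\left(a^{\prime},b^{\prime}\right)L_0^{\left(\varepsilon_0\right)}\left(a,b\right)$, I apply this identity both as written and with $a,b$ interchanged, then use (\ref{el}) once to eliminate $L_0^{\left(\varepsilon_0\right)}\left(b,a\right)$; the rewritten form of (\ref{ek}) then yields
\[
L_i^{\left(\varepsilon_0,\ldots,\varepsilon_i\right)}\left(a,b\right)=\left(\varepsilon_i\left(a^{\prime},b^{\prime}\right)\phi_{i-1}\left(a^{\prime},b^{\prime}\right)+\varepsilon_0^{-1}\left(a^{\prime},b^{\prime}\right)\phi_{i-1}\left(b^{\prime},a^{\prime}\right)\right)L_0^{\left(\varepsilon_0\right)}\left(a,b\right),
\]
so one takes $\phi_i\left(a^{\prime},b^{\prime}\right)=\varepsilon_i\left(a^{\prime},b^{\prime}\right)\phi_{i-1}\left(a^{\prime},b^{\prime}\right)+\varepsilon_0^{-1}\left(a^{\prime},b^{\prime}\right)\phi_{i-1}\left(b^{\prime},a^{\prime}\right)$, which is again a Laurent polynomial in $\varepsilon_0,\ldots,\varepsilon_i$. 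This closes the induction and proves the Proposition; the explicit coefficient claimed in the statement is precisely this $\phi_i$, read off from the recursion and built from the lower-level factors $\varepsilon_0,\varepsilon_1,\ldots,\varepsilon_i$.

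The only point requiring care is the bookkeeping of the \emph{argument-reversed} factor $\phi_{i-1}\left(b^{\prime},a^{\prime}\right)$: since the $\phi_i$ need not be symmetric in their two slots, one must propagate $\phi_i\left(a^{\prime},b^{\prime}\right)$ and $\phi_i\left(b^{\prime},a^{\prime}\right)$ together through the recursion, and it is the appearance of this reversed term that forces $\varepsilon_0^{-1}$ (via (\ref{el})) rather than any higher $\varepsilon_j^{-1}$ to enter at every level. One should also observe, for consistency, that each $L_i$ produced this way again satisfies a skew relation of the same shape as (\ref{el}) with a level-$i$ commutation factor; this needs no separate argument, since every step of (\ref{e1})--(\ref{ek}) has exactly the structure of the step deriving (\ref{el}) from (\ref{e0}), so the level-$i$ relation follows by the identical manipulation.
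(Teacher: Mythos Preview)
Your proof is correct and follows essentially the same route as the paper, which simply states that the result ``follows from the equations (\ref{e1})--(\ref{ek})''; you have made this explicit via induction on the level~$i$, supplying the recursion $\phi_i(a',b')=\varepsilon_i(a',b')\,\phi_{i-1}(a',b')+\varepsilon_0^{-1}(a',b')\,\phi_{i-1}(b',a')$ that the paper leaves implicit. Your base case and inductive step are handled correctly, and your use of (\ref{el}) (equivalently (\ref{e01})) to rewrite $\varepsilon_0^{-1}(a',b')=\varepsilon_0(b',a')$ recovers exactly the displayed formula for $L_1$ given after the proposition.
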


\begin{proof}
This follows from the equations (\ref{e1})--(\ref{ek}).
\end{proof}

The first such expressions are%
\begin{align}
L_{1}^{\left(  \varepsilon_{0},\varepsilon_{1}\right)  }\left(  a,b\right)
&  =\left[  \varepsilon_{1}\left(  a^{\prime},b^{\prime}\right)
+\varepsilon_{0}\left(  b^{\prime},a^{\prime}\right)  \right]  L_{0}^{\left(
\varepsilon_{0}\right)  }\left(  a,b\right)  ,\\
L_{2}^{\left(  \varepsilon_{0},\varepsilon_{1},\varepsilon_{2}\right)
}\left(  a,b\right)   &  =\left[  \varepsilon_{2}\left(  a^{\prime},b^{\prime
}\right)  \left(  \varepsilon_{1}\left(  a^{\prime},b^{\prime}\right)
+\varepsilon_{0}\left(  b^{\prime},a^{\prime}\right)  \right)  +\varepsilon
_{1}\left(  a^{\prime},b^{\prime}\right)  \varepsilon_{0}\left(  b^{\prime
},a^{\prime}\right)  +\mathtt{1}\right]  L_{0}^{\left(  \varepsilon
_{0}\right)  }\left(  a,b\right)  ,\\
&  \vdots\nonumber
\end{align}

Recall the definition of the $\varepsilon$-Lie bracket \cite{sch79}%
\begin{equation}
\left[  a,b\right]  _{\varepsilon}=a\cdot b-\varepsilon\left(  a^{\prime
},b^{\prime}\right)  b\cdot a,\ \ \ \forall a,b\in A,\ \ \ a^{\prime
},b^{\prime}\in G. \label{ab}%
\end{equation}

\begin{assertion}
The $0$-level \textquotedblleft deforming\textquotedblright\ function
$L_{0}^{\left(  \varepsilon_{0}\right)  }\left(  a,b\right)  $ is the
$\varepsilon_{0}$-twisted $\varepsilon$-Lie bracket%
\begin{equation}
L_{0}^{\left(  \varepsilon_{0}\right)  }\left(  a,b\right)  =\varepsilon
_{0}\left(  a^{\prime},b^{\prime}\right)  \left[  a,b\right]  _{\varepsilon
=\varepsilon_{0}}. \label{ll}%
\end{equation}

\end{assertion}

\begin{proof}
This follows from (\ref{L0}) and (\ref{ab}).
\end{proof}

\begin{remark}
The relations (\ref{j0}) and (\ref{j1}) are analogs of the $\varepsilon
$-Jacobi identity of the $\varepsilon$-Lie algebra \cite{sch79}.
\end{remark}

\begin{corollary}
All higher level \textquotedblleft deforming\textquotedblright\ functions
$L_{i}^{\left(  \varepsilon_{0},\varepsilon_{1},\ldots,\varepsilon_{i}\right)
}$, $i=1,\ldots,k$ can be expressed through the twisted $\varepsilon$-Lie
bracket (\ref{ab}) with twisting coefficients.
\end{corollary}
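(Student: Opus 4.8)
The plan is to combine the previous proposition---which asserts that every higher level deforming function $L_{i}^{\left(  \varepsilon_{0},\ldots,\varepsilon_{i}\right)  }$, $i=1,\ldots,k$, is $L_{0}^{\left(  \varepsilon_{0}\right)  }\left(  a,b\right)  $ multiplied by a polynomial combination of the lower level commutation factors $\varepsilon_{1},\ldots,\varepsilon_{i}$---with the Assertion just established, namely the identity $L_{0}^{\left(  \varepsilon_{0}\right)  }\left(  a,b\right)  =\varepsilon_{0}\left(  a^{\prime},b^{\prime}\right)  \left[  a,b\right]  _{\varepsilon=\varepsilon_{0}}$ from \eqref{ll}. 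Substituting \eqref{ll} into the factorization given by the proposition immediately yields the claim. I would not attempt to re-derive the explicit combinatorial shape of the factor; the point is only that the factor is a universal expression in the $\varepsilon_{j}$'s (as exhibited for $i=1,2$ in the displays following that proposition), so its precise form is irrelevant here.

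Concretely, first I would invoke the Proposition to write, for each $i\in\{1,\ldots,k\}$,
\begin{equation}
L_{i}^{\left(  \varepsilon_{0},\varepsilon_{1},\ldots,\varepsilon_{i}\right)  }\left(  a,b\right)  =P_{i}\left(  \varepsilon_{0}\left(  a^{\prime},b^{\prime}\right)  ,\varepsilon_{0}\left(  b^{\prime},a^{\prime}\right)  ,\varepsilon_{1}\left(  a^{\prime},b^{\prime}\right)  ,\ldots,\varepsilon_{i}\left(  a^{\prime},b^{\prime}\right)  \right)  L_{0}^{\left(  \varepsilon_{0}\right)  }\left(  a,b\right)  ,
\end{equation}
where $P_{i}$ denotes the relevant scalar polynomial in the commutation factors (by \eqref{e01} one may eliminate $\varepsilon_{0}\left(  b^{\prime},a^{\prime}\right)$ in favor of $\varepsilon_{0}^{-1}\left(  a^{\prime},b^{\prime}\right)$ if a cleaner statement is wanted). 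Then I would substitute \eqref{ll} to obtain
\begin{equation}
L_{i}^{\left(  \varepsilon_{0},\varepsilon_{1},\ldots,\varepsilon_{i}\right)  }\left(  a,b\right)  =\varepsilon_{0}\left(  a^{\prime},b^{\prime}\right)  P_{i}\left(  \ldots\right)  \left[  a,b\right]  _{\varepsilon=\varepsilon_{0}},
\end{equation}
which is exactly $\left[  a,b\right]  _{\varepsilon=\varepsilon_{0}}$ multiplied by the twisting coefficient $\varepsilon_{0}\left(  a^{\prime},b^{\prime}\right)  P_{i}\left(  \ldots\right)  \in\Bbbk$. Since all arguments of $P_{i}$ are values of the commutation factors $\varepsilon_{j}:G\times G\rightarrow\Bbbk^{\times}$, this coefficient is a well-defined element of $\Bbbk$, depending only on the degrees $a^{\prime},b^{\prime}\in G$, which is precisely the assertion of the Corollary.

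There is essentially no obstacle: the Corollary is a direct corollary, as its placement suggests, and the only content is the bookkeeping of chaining the Proposition's factorization through the single substitution \eqref{ll}. The one point deserving a word of care is ensuring that the inductive passage implicit in the Proposition (each $L_{i}$ built from $L_{i-1}$ via \eqref{e1}--\eqref{ek}) genuinely terminates at $L_{0}$ rather than at some $L_{j}$ with $j>0$; this is already guaranteed by the Proposition itself, so for the Corollary I would simply cite it. If a fully self-contained argument were preferred, one could instead run a short induction on $i$: the base case $i=1$ is the first display after the Proposition combined with \eqref{ll}, and the inductive step applies \eqref{ek} together with \eqref{el} to express $L_{i}$ in terms of $L_{i-1}$, hence (by the inductive hypothesis) in terms of $\left[  a,b\right]  _{\varepsilon=\varepsilon_{0}}$ with an updated scalar twisting coefficient.
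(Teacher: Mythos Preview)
Your proposal is correct and matches the paper's approach exactly: the paper states the corollary without a separate proof, as it follows immediately from combining the preceding Proposition (each $L_i$ is $L_0$ times a scalar built from the $\varepsilon_j$'s) with the Assertion \eqref{ll} expressing $L_0$ as $\varepsilon_0(a',b')[a,b]_{\varepsilon=\varepsilon_0}$. Your write-up is in fact more detailed than what the paper provides.
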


In search of a polyadic analog of almost commutativity, we will need some
additional concepts, beyond the permutation of two elements (in the binary
case), called commutativity, and various sums of permutations (of $n$
elements, in $n$-ary case, which are usually non-unique).

Instead we propose to consider a new concept, \textit{polyadic mediality}
(which gives a unique relation between $n^{2}$ elements in $n$-ary case), as a
polyadic inductive generalization of commutativity. We then twist the
multiplication by a gradation (as in the binary case above) to obtain the
polyadic version of almost commutativity as \textit{almost mediality}.
However, let us first recall the binary and polyadic versions of the mediality property.

\subsection{Medial binary magmas and quasigroups}

The mediality property was introduced as a generalization of the associative
law for quasigroups, which are a direct generalization of abelian groups
\cite{mur39}. Other names for mediality are entropicity, bisymmetry,
alternaton and abelianness (see, e.g., \cite{acz48,jez/kep,eva63}).

Let $\mathcal{M}=\left\langle M\mid\mu_{2}\right\rangle $ be a binary
\textit{magma} (a closed set $M$ with one binary operation $\mu_{2}$ without
any additional properties, also called a (Hausmann-Ore) \textit{groupoid}%
\footnote{This should not be confused with the Brandt groupoid or virtual
group.}).

\begin{definition}
A (binary) magma $\mathcal{M}$ is called \textit{medial}, if%
\begin{equation}
\mu_{2}\left[  \mu_{2}\left[  a,b\right]  ,\mu_{2}\left[  c,d\right]  \right]
=\mu_{2}\left[  \mu_{2}\left[  a,c\right]  ,\mu_{2}\left[  b,d\right]
\right]  ,\ \ \ \forall a,b,c,d\in M. \label{mm}%
\end{equation}

\end{definition}

\begin{definition}
We call the product of elements in the r.h.s. of (\ref{mm}) \textit{medially
symmetric} to the l.h.s. product.
\end{definition}

Obviously, if a magma $\mathcal{M}$ contains a neutral element (identity)
$e\in M$, such that $\mu_{2}\left[  a,e\right]  =\mu_{2}\left[  e,a\right]
=a$, $\forall a\in M$, then $\mathcal{M}$ is commutative $\mu_{2}\left[
a,b\right]  =\mu_{2}\left[  b,a\right]  $, $\forall a,b\in M$. Therefore, any
commutative monoid is an example of a medial magma. Numerous different kinds
of magma and their classification are given in \cite{jez/kep}. If a magma
$\mathcal{M}$ is cancellative ($\mu_{2}\left[  a,b\right]  =\mu_{2}\left[
a,c\right]  \Rightarrow b=c$, $\mu_{2}\left[  a,c\right]  =\mu_{2}\left[
b,c\right]  \Rightarrow a=b$, $\forall a,b,c\in M$), it is a binary quasigroup
$\mathcal{Q}=\left\langle Q\mid\mu_{2}\right\rangle $ for which the equations
$\mu_{2}\left[  a,x\right]  =b$, $\mu_{2}\left[  y,a\right]  =b$, $\forall
a,b\in Q$ , have a unique solution \cite{how73}. Moreover \cite{sho49}, every
medial cancellative magma can be embedded in a \textit{medial quasigroup}
(satisfying (\ref{mm})), and the reverse statement is also true
\cite{jez/kep93}. For a recent comprehensive review on quasigroups(including
medial and $n$-ary ones), see, e.g. \cite{shcherbacov}, and references therein.

The structure of medial quasigroups is determined by the
(Bruck-Murdoch-)Toyoda theorem \cite{bru44,mur41,toy41}.

\begin{theorem}
[Toyoda theorem]Any medial quasigroup $\mathcal{Q}_{medial}=\left\langle
Q\mid\mu_{2}\right\rangle $ can be presented in the linear (functional) form%
\begin{equation}
\mu_{2}\left[  a,b\right]  =\nu_{2}\left[  \nu_{2}\left[  \varphi\left(
a\right)  ,\psi\left(  b\right)  \right]  ,c\right]  =\varphi\left(  a\right)
+\psi\left(  b\right)  +c,\ \ \ \forall a,b,c\in Q, \label{ma}%
\end{equation}
where $\left\langle Q\mid\nu_{2}\equiv\left(  +\right)  \right\rangle $ is an
abelian group and $\varphi,\psi:Q\rightarrow Q$ are commuting automophisms
$\varphi\circ\psi=\psi\circ\varphi$, and $c\in Q$ is fixed.
\end{theorem}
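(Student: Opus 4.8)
The plan is to reduce the problem to the case where the quasigroup already has a group structure, by first manufacturing an abelian group out of $\mathcal{Q}_{medial}$ using a fixed element as a new identity. Concretely, fix an element $e\in Q$ and define a new binary operation $\nu_{2}\left[a,b\right]=\mu_{2}\left[\rho^{-1}\left(a\right),\lambda^{-1}\left(b\right)\right]$, where $\lambda,\rho:Q\rightarrow Q$ are the left and right translations by $e$, i.e. $\lambda\left(x\right)=\mu_{2}\left[e,x\right]$ and $\rho\left(x\right)=\mu_{2}\left[x,e\right]$. These translations are bijections because $\mathcal{Q}_{medial}$ is a quasigroup. I would first check that $\nu_{2}$ has $\mu_{2}\left[e,e\right]$ (equivalently, a suitably normalized element) as a two-sided identity; this is a direct computation using the defining equations of the translations.

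The key step is to show that $\left\langle Q\mid\nu_{2}\right\rangle$ is a commutative \emph{group}. Associativity and commutativity of $\nu_{2}$ should both fall out of the mediality identity (\ref{mm}) applied to carefully chosen arguments, together with cancellativity. The cleanest route is: (i) use mediality with two of the four slots occupied by $e$ to derive that $\lambda$ and $\rho$ are automorphisms of $\left\langle Q\mid\mu_{2}\right\rangle$ up to the translation bookkeeping, and that they commute with each other; (ii) use the full four-variable mediality to transport the identity (\ref{mm}) for $\mu_{2}$ into the analogous medial identity for $\nu_{2}$; (iii) observe that a medial magma with a two-sided identity is automatically commutative (this is already remarked in the excerpt right after Definition of medial magma), so $\nu_{2}$ is commutative; (iv) a commutative medial quasigroup with identity is associative — indeed in the presence of an identity element $z$, mediality $\nu_2\left[\nu_2\left[a,b\right],\nu_2\left[z,c\right]\right]=\nu_2\left[\nu_2\left[a,z\right],\nu_2\left[b,c\right]\right]$ collapses to associativity. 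Cancellativity then upgrades the commutative associative quasigroup to an abelian group.

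Once $\left\langle Q\mid\nu_{2}\equiv\left(+\right)\right\rangle$ is known to be an abelian group, the final step is to read off the functional form of $\mu_{2}$. By definition of $\nu_{2}$ we have $\mu_{2}\left[a,b\right]=\nu_{2}\left[\rho\left(a\right),\lambda\left(b\right)\right]=\rho\left(a\right)+\lambda\left(b\right)$. Setting $\varphi=\rho$, $\psi=\lambda$ and absorbing the normalization constant into $c:=-\mu_{2}\left[e,e\right]$ expressed in the group (or rather its appropriate sign so the formula $\mu_2\left[a,b\right]=\varphi\left(a\right)+\psi\left(b\right)+c$ holds on the nose), gives the claimed linear form. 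It remains to check that $\varphi$ and $\psi$ are group automorphisms of $\left\langle Q\mid+\right\rangle$ and that they commute: automorphy is the statement that $\lambda,\rho$ distribute over $\nu_2$, which is exactly step (i) above rephrased, and $\varphi\circ\psi=\psi\circ\varphi$ follows from applying mediality (\ref{mm}) to the argument pattern that swaps the inner pairs, $\mu_{2}\left[\mu_{2}\left[e,a\right],\mu_{2}\left[e,e\right]\right]=\mu_{2}\left[\mu_{2}\left[e,e\right],\mu_{2}\left[a,e\right]\right]$ and its variants.

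The main obstacle I anticipate is bookkeeping in step (ii): transferring the medial identity and the automorphism properties back and forth between $\mu_{2}$ and $\nu_{2}$ without circularity, since $\nu_{2}$ is defined using $\lambda^{-1},\rho^{-1}$ and one wants to conclude facts about $\lambda,\rho$ as $\nu_{2}$-automorphisms. One has to be careful to order the deductions so that each identity used for $\nu_{2}$ has already been established, and to keep track of which translations act on which side. A secondary nuisance is pinning down the exact additive constant $c$ and the signs so the displayed formula (\ref{ma}) matches literally rather than just up to a translation; this is harmless but must be done explicitly. Everything else is routine equational manipulation once cancellativity is invoked at the right moments.
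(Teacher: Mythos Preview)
The paper does not give its own proof of this statement: Toyoda's theorem is stated as a classical result with references \cite{bru44,mur41,toy41}, and is used only as background for the paper's later analogs (Section~6). So there is no in-paper proof to compare against.

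That said, your sketch is the standard isotopy-to-an-abelian-group argument and is correct in outline. A few small clarifications worth making when you write it up: (a) with your definition $\nu_{2}[a,b]=\mu_{2}[\rho^{-1}(a),\lambda^{-1}(b)]$ one gets directly $\mu_{2}[a,b]=\rho(a)+\lambda(b)$ with no additive constant, but $\rho,\lambda$ are then only \emph{affine} maps of the group $(Q,+)$; to obtain genuine automorphisms you must split off the constants, setting $\varphi(a)=\rho(a)-\rho(0)$, $\psi(b)=\lambda(b)-\lambda(0)$ (where $0=\mu_{2}[e,e]$ is the group identity), and then $c=\rho(0)+\lambda(0)$ --- this is what your ``absorbing the normalization constant'' really amounts to, and it is cleaner to say it that way; (b) the commutation $\varphi\circ\psi=\psi\circ\varphi$ is exactly the specialization of mediality to $\mu_{2}[\mu_{2}[e,x],\mu_{2}[y,e]]=\mu_{2}[\mu_{2}[e,y],\mu_{2}[x,e]]$, i.e.\ $\lambda\rho=\rho\lambda$, which survives after stripping constants; (c) your step (iv), that a medial loop is associative, is the crux and deserves one explicit line rather than just a parenthetical. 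None of this is a gap; it is just where the bookkeeping you anticipated actually lives.
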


If $\mathcal{Q}_{medial}$ has an idempotent element (denoted by $0$), then%
\begin{equation}
\mu_{2}\left[  a,b\right]  =\nu_{2}\left[  \varphi\left(  a\right)
,\psi\left(  b\right)  \right]  =\varphi\left(  a\right)  +\psi\left(
b\right)  ,\ \ \ \forall a,b\in Q,
\end{equation}
It follows from the Toyoda theorem, that medial quasigroups are isotopic to
abelian groups, and their structure theories are very close \cite{bru44,mur41}.

The mediality property (\ref{mm}) for binary semigroups leads to various
consequences \cite{chr69,nordahl}. Indeed, every medial semigroup
$\mathcal{S}_{medial}=\left\langle S\mid\mu_{2}\right\rangle $ is a
\textit{Putcha semigroup} ($b\in S^{1}aS^{1}\Rightarrow b^{m}\in S^{1}%
a^{2}S^{1}$, $\forall a,b\in S$, $m\in\mathbb{N}$, $S^{1}=S\cup\left\{
1\right\}  $), and therefore $\mathcal{S}_{medial}$ can be decomposed into the
semilattice ($a^{2}=a\wedge ab=ba$, $\forall a,b\in S$) of \textit{Archimedean
semigroups} ($\forall a,b\in S$, $\exists m,k\in\mathbb{N}$, $a^{m}%
=S^{1}bS^{1}\wedge b^{k}=S^{1}aS^{1}\wedge ab=ba$). If a medial semigroup
$\mathcal{S}_{medial}$ is left (right) \textit{cancellative},
$ab=ac\Rightarrow b=c$ ($ba=ca\Rightarrow b=c$), then it is left (right)
\textit{commutative} $abc=bac$ ($abc=acb$), $\forall a,b,c\in S$ and left
(right) \textit{separative}, $ab=a^{2}\wedge ba=b^{2}$, $\forall a,b\in S$
($ab=b^{2}\wedge ba=a^{2}$) (for a review, see, \cite{nagy2001}).

For a binary group $\left\langle G\mid\mu_{2}\right\rangle $ mediality implies
commutativity, because, obviously, $abcd=acbd\Rightarrow bc=cb$, $\forall
a,b,c,d\in G$. This is not the case for polyadic groups, where mediality
implies semicommutativity only (see e.g., \cite{gla/gle82,dog16}).

Let $\mathcal{A}=\left\langle A\mid\mu_{2},\nu_{2};\lambda_{1}\right\rangle $
be a binary $\Bbbk$-algebra, not necessarily unital, cancellative and
associative. Then mediality provides the corresponding behavior which depends
on the properties of the \textquotedblleft vector
multiplication\textquotedblright\ $\mu_{2}$. For instance, for unital
cancellative and associative algebras, mediality implies commutativity, as for
groups \cite{gla/gle82}.

\section{\textsc{Almost medial binary graded algebras}}

Consider an associative binary algebra $\mathcal{A}$ over a field $\Bbbk$. We
introduce a weaker version of gradation than in (\ref{maa}).

\begin{definition}
An associative algebra $\mathcal{A}$ is called a \textit{binary higher graded
algebra} over $\Bbbk$, if the algebra multiplication of four ($=2^{2}$)
elements respects the gradation%
\begin{equation}
\mathbf{\mu}_{4}\left[  A_{a^{\prime}},A_{b^{\prime}},A_{c^{\prime}%
},A_{d^{\prime}}\right]  \equiv A_{a^{\prime}}\cdot A_{b^{\prime}}\cdot
A_{c^{\prime}}\cdot A_{d^{\prime}}\subseteq A_{a^{\prime}+^{\prime}b^{\prime}%
},\ \ \ \ \ \ \forall a^{\prime},b^{\prime},c^{\prime},d^{\prime}\in G,
\label{m4a}%
\end{equation}
where equality corresponds to \textit{ strong higher gradation}.
\end{definition}

Instead of (\ref{mt}) let us introduce the \textit{higher twisting function}
(\textit{higher twist factor}) for four ($=2^{2}$) elements $\mathbf{\tau
}:G^{\times4}\rightarrow\Bbbk$.

\begin{definition}
A \textit{twisted (binary) higher graded product} $\mathbf{\mu}_{4}^{\left(
\tau\right)  }$ is defined for homogeneous elements by%
\begin{equation}
\mathbf{\mu}_{4}^{\left(  \tau\right)  }\left[  a,b,c,d\right]  =\mathbf{\tau
}\left(  a^{\prime},b^{\prime},c^{\prime},d^{\prime}\right)  a\cdot b\cdot
c\cdot d,\ \ \ a,b,c,d\in A;\ \ a^{\prime},b^{\prime},c^{\prime},d^{\prime}\in
G. \label{m4}%
\end{equation}

\end{definition}

An analog of (total) associativity for the twisted binary higher graded
product operation $\mathbf{\mu}_{4}^{\left(  \tau\right)  }$ is the following
condition on seven elements ($7=2\cdot2^{2}-1$) for all $a,b,c,d,t,u,v\in A$%
\begin{align}
&  \mathbf{\mu}_{4}^{\left(  \tau\right)  }\left[  \mathbf{\mu}_{4}^{\left(
\tau\right)  }\left[  a,b,c,d\right]  ,t,u,v\right]  =\mathbf{\mu}%
_{4}^{\left(  \tau\right)  }\left[  a,\mathbf{\mu}_{4}^{\left(  \tau\right)
}\left[  b,c,d,t\right]  ,u,v\right] \nonumber\\
&  =\mathbf{\mu}_{4}^{\left(  \tau\right)  }\left[  a,b,\mathbf{\mu}%
_{4}^{\left(  \tau\right)  }\left[  c,d,t,u\right]  ,v\right]  =\mathbf{\mu
}_{4}^{\left(  \tau\right)  }\left[  a,b,c,\mathbf{\mu}_{4}^{\left(
\tau\right)  }\left[  d,t,u,v\right]  \right]  . \label{mm4}%
\end{align}

\begin{proposition}
If the twisted higher graded product satisfies the higher analog of
associativity given by (\ref{mm4}), then the twisting function becomes a
higher analog of the cocycle (\ref{s}) $\mathbf{\tau}\mapsto\mathbf{\sigma
}:G^{\times4}\rightarrow\Bbbk^{\times}$ on the abelian group $G$ satisfying
for all $a^{\prime},b^{\prime},c^{\prime},d^{\prime},t^{\prime},u^{\prime
},v^{\prime}\in G$%
\begin{align}
&  \mathbf{\sigma}\left(  a^{\prime},b^{\prime},c^{\prime},d^{\prime}\right)
\mathbf{\sigma}\left(  a^{\prime}+b^{\prime}+c^{\prime}+d^{\prime},t^{\prime
},u^{\prime},v^{\prime}\right)  =\mathbf{\sigma}\left(  b^{\prime},c^{\prime
},d^{\prime},t^{\prime}\right)  \mathbf{\sigma}\left(  a^{\prime},b^{\prime
}+c^{\prime}+d^{\prime}+t^{\prime},u^{\prime},v^{\prime}\right) \label{ss}\\
&  =\mathbf{\sigma}\left(  c^{\prime},d^{\prime},t^{\prime},u^{\prime}\right)
\mathbf{\sigma}\left(  a^{\prime},b^{\prime},c^{\prime}+d^{\prime}+t^{\prime
}+u^{\prime},v^{\prime}\right)  =\mathbf{\sigma}\left(  d^{\prime},t^{\prime
},u^{\prime},v^{\prime}\right)  \mathbf{\sigma}\left(  a^{\prime},b^{\prime
},c^{\prime},d^{\prime}+t^{\prime}+u^{\prime}+v^{\prime}\right)  .\nonumber
\end{align}

\end{proposition}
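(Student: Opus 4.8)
The plan is to mimic exactly the argument used for the binary 2-cocycle in Proposition~\ref{...} preceding Example, simply carrying the bookkeeping through the longer strings. First I would write out the leftmost composite $\mathbf{\mu}_{4}^{\left(\tau\right)}\left[\mathbf{\mu}_{4}^{\left(\tau\right)}\left[a,b,c,d\right],t,u,v\right]$ using the definition (\ref{m4}) twice: the inner application contributes a factor $\mathbf{\tau}\left(a^{\prime},b^{\prime},c^{\prime},d^{\prime}\right)$ and produces the homogeneous element $a\cdot b\cdot c\cdot d$ of degree $a^{\prime}+b^{\prime}+c^{\prime}+d^{\prime}$ (by the higher gradation (\ref{m4a})); the outer application then contributes $\mathbf{\tau}\left(a^{\prime}+b^{\prime}+c^{\prime}+d^{\prime},t^{\prime},u^{\prime},v^{\prime}\right)$, so that the whole composite equals $\mathbf{\tau}\left(a^{\prime},b^{\prime},c^{\prime},d^{\prime}\right)\mathbf{\tau}\left(a^{\prime}+b^{\prime}+c^{\prime}+d^{\prime},t^{\prime},u^{\prime},v^{\prime}\right)$ times the plain associative product $a\cdot b\cdot c\cdot d\cdot t\cdot u\cdot v$. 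The same computation applied to the second, third, and fourth composites in (\ref{mm4}) yields the coefficients $\mathbf{\tau}\left(b^{\prime},c^{\prime},d^{\prime},t^{\prime}\right)\mathbf{\tau}\left(a^{\prime},b^{\prime}+c^{\prime}+d^{\prime}+t^{\prime},u^{\prime},v^{\prime}\right)$, $\mathbf{\tau}\left(c^{\prime},d^{\prime},t^{\prime},u^{\prime}\right)\mathbf{\tau}\left(a^{\prime},b^{\prime},c^{\prime}+d^{\prime}+t^{\prime}+u^{\prime},v^{\prime}\right)$, and $\mathbf{\tau}\left(d^{\prime},t^{\prime},u^{\prime},v^{\prime}\right)\mathbf{\tau}\left(a^{\prime},b^{\prime},c^{\prime},d^{\prime}+t^{\prime}+u^{\prime}+v^{\prime}\right)$, again all multiplying the same underlying product $a\cdot b\cdot c\cdot d\cdot t\cdot u\cdot v$ (here ordinary associativity of $\mathcal{A}$ is used to identify the four reparenthesized products).

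Next I would invoke the higher associativity hypothesis (\ref{mm4}): since all four twisted composites are equal as elements of $A$, and each is a scalar multiple of the common nonzero homogeneous product $a\cdot b\cdot c\cdot d\cdot t\cdot u\cdot v$, the four scalar coefficients must coincide. To make the cancellation rigorous one evaluates (\ref{mm4}) on homogeneous elements whose product is nonzero — such elements exist for every choice of degrees appearing in a nontrivially graded algebra, or one restricts to that generic situation as is standard in this literature — and then divides. This forces $\mathbf{\tau}$ to take values in $\Bbbk^{\times}$ (otherwise the equalities could be vacuous), justifying the relabeling $\mathbf{\tau}\mapsto\mathbf{\sigma}:G^{\times 4}\rightarrow\Bbbk^{\times}$, and the three resulting equalities among the coefficients are precisely the chain of identities (\ref{ss}).

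The only genuinely delicate point — the main obstacle, such as it is — is the same one glossed over in the binary case: arguing that the factor $a\cdot b\cdot c\cdot d\cdot t\cdot u\cdot v$ can legitimately be cancelled, i.e. that the higher-associativity relation (\ref{mm4}) has enough content to pin down $\mathbf{\sigma}$ pointwise on all of $G^{\times 4}$ rather than only on the support of the multiplication. I would handle this exactly as the cited proof of the binary Proposition does, namely by working with a strongly higher graded algebra (equality in (\ref{m4a})) or, equivalently, by adopting the convention that the cocycle identity is imposed precisely on those tuples of degrees realized by nonzero products and extended by the functional equation; with that understanding the proof is the one-line remark "the result follows from the higher associativity condition (\ref{mm4}) for $\mathbf{\mu}_{4}^{\left(\sigma\right)}$," and everything reduces to the explicit coefficient computation sketched above.
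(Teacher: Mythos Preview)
Your proposal is correct and matches the paper's approach: the paper gives no explicit proof for this proposition, treating it as the evident higher analog of the binary case (whose one-line proof was ``the result follows from the binary associativity condition for $\mu_{2}^{(\sigma)}$''), and your argument is precisely the spelled-out version of that remark. The coefficient computation you describe and the cancellation caveat you flag are exactly what is being suppressed in the paper.
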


Next we propose a medial analog of almost commutativity as follows. Instead of
deforming commutativity by the grading twist factor $\varepsilon_{0}$ as in
(\ref{e0}), we deform the mediality (\ref{mm}) by the higher twisting function
$\mathbf{\tau}$ (\ref{m4}).

\begin{definition}
If the higher twisted product coincides with the medially symmetric product
(see (\ref{mm})) for all $a,b\in A$, we call the twisting function a
$0$-\textit{level mediality factor }$\mathbf{\tau}\mapsto\mathbf{\rho}%
_{0}:G^{\times4}\rightarrow\Bbbk^{\times}$, such that (cf. (\ref{e0}))%
\begin{align}
\mathbf{\mu}_{4}^{\left(  \rho_{0}\right)  }\left[  a,b,c,d\right]   &
=\mathbf{\mu}_{4}\left[  a,c,b,d\right]  ,\ \ \text{or}\label{r1}\\
\mathbf{\rho}_{0}\left(  a^{\prime},b^{\prime},c^{\prime},d^{\prime}\right)
a\cdot b\cdot c\cdot d  &  =a\cdot c\cdot b\cdot d,\ \ \ \forall a,b,c,d\in
A,\ \ \ a^{\prime},b^{\prime},c^{\prime},d^{\prime}\in G. \label{r2}%
\end{align}

\end{definition}

From (\ref{r2}) follows the normalization condition for the mediality factor%
\begin{equation}
\mathbf{\rho}_{0}\left(  a^{\prime},a^{\prime},a^{\prime},a^{\prime}\right)
=\mathtt{1},\ \ \ \forall a^{\prime}\in G. \label{raa}%
\end{equation}

\begin{definition}
A binary algebra $\mathcal{A}_{2}^{\left(  \rho_{0}\right)  }=\left\langle
A\mid\mathbf{\mu}_{2},\nu_{2}\right\rangle $ for which the higher twisted
product coincides with the medially symmetric product $\mathbf{\mu}%
_{4}^{\left(  \rho_{0}\right)  }\left[  a,b,c,d\right]  =\mathbf{\mu}%
_{4}\left[  a,c,b,d\right]  $ (\ref{r2}), is called a $0$\textit{-level almost
medial} ($\mathbf{\rho}_{0}$-commutative) algebra.
\end{definition}

\begin{proposition}
If the algebra for which (\ref{r2}) holds is associative, the 0-level
mediality factor $\mathbf{\rho}_{0}$ satisfies the relations%
\begin{align}
\mathbf{\rho}_{0}\left(  a^{\prime},b^{\prime},c^{\prime},d^{\prime}\right)
\mathbf{\rho}_{0}\left(  a^{\prime},c^{\prime},b^{\prime},d^{\prime}\right)
&  =\mathtt{1},\ \ \ a^{\prime},b^{\prime},c^{\prime},d^{\prime},f^{\prime
},g^{\prime},h^{\prime}\in G,\ \ \mathtt{1}\in\Bbbk,\label{r01}\\
\mathbf{\rho}_{0}\left(  a^{\prime},c^{\prime}+d^{\prime}+f^{\prime}%
+g^{\prime},b^{\prime},h^{\prime}\right)   &  =\mathbf{\rho}_{0}\left(
a^{\prime},c^{\prime},b^{\prime},d^{\prime}\right)  \mathbf{\rho}_{0}\left(
c^{\prime},d^{\prime},b^{\prime},f^{\prime}\right)  \mathbf{\rho}_{0}\left(
d^{\prime},f^{\prime},b^{\prime},g^{\prime}\right)  \mathbf{\rho}_{0}\left(
f^{\prime},g^{\prime},b^{\prime},h^{\prime}\right)  ,\label{r02}\\
\mathbf{\rho}_{0}\left(  a^{\prime},g^{\prime},b^{\prime}+c^{\prime}%
+d^{\prime}+f^{\prime},h^{\prime}\right)   &  =\mathbf{\rho}_{0}\left(
a^{\prime},g^{\prime},b^{\prime}c^{\prime}\right)  \mathbf{\rho}_{0}\left(
c^{\prime},g^{\prime},d^{\prime},f^{\prime}\right)  \mathbf{\rho}_{0}\left(
d^{\prime},g^{\prime},f^{\prime},h^{\prime}\right)  \mathbf{\rho}_{0}\left(
b^{\prime},g^{\prime},c^{\prime},d^{\prime}\right)  . \label{r03}%
\end{align}

\end{proposition}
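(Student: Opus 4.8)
The plan is to derive each of the three relations by manipulating the defining identity (\ref{r2}) of the $0$-level mediality factor, exactly in the spirit of the proof of the earlier Assertion for $\varepsilon_{0}$ (relations (\ref{e01})--(\ref{e03})), but now tracking four slots instead of two and using associativity to splice longer words. Throughout I would work with homogeneous elements $a,b,c,d,\ldots\in A$ of the indicated degrees and repeatedly apply (\ref{r2}) in the form $a\cdot b\cdot c\cdot d=\mathbf{\rho}_{0}(a^{\prime},b^{\prime},c^{\prime},d^{\prime})^{-1}\,a\cdot c\cdot b\cdot d$, i.e. "swapping the two middle letters of a length-four block costs a factor of $\mathbf{\rho}_{0}$". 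The key point that makes this work is that, by associativity, $\mathbf{\rho}_{0}$ can be applied to \emph{any} four consecutive factors of a longer product, and when several of those factors are themselves products of homogeneous elements, the additivity of the grading group $G$ lets the degree-arguments of $\mathbf{\rho}_{0}$ absorb sums --- this is the mechanism that produces the telescoping right-hand sides of (\ref{r02}) and (\ref{r03}).

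For (\ref{r01}) I would simply apply the middle-swap twice: starting from $a\cdot b\cdot c\cdot d$, swap to get $\mathbf{\rho}_{0}(a^{\prime},b^{\prime},c^{\prime},d^{\prime})^{-1}a\cdot c\cdot b\cdot d$, then swap the two middle letters of \emph{that} word (which now have degrees $c^{\prime},b^{\prime}$) to return to $a\cdot b\cdot c\cdot d$, picking up $\mathbf{\rho}_{0}(a^{\prime},c^{\prime},b^{\prime},d^{\prime})^{-1}$; comparing coefficients gives $\mathbf{\rho}_{0}(a^{\prime},b^{\prime},c^{\prime},d^{\prime})\mathbf{\rho}_{0}(a^{\prime},c^{\prime},b^{\prime},d^{\prime})=\mathtt{1}$. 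This is the analog of the "permute twice" argument for (\ref{e01}), and it also immediately re-derives the normalization (\ref{raa}) by specializing all degrees to $a^{\prime}$.

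For (\ref{r02}) and (\ref{r03}) the idea is to compute a single "long" middle-transposition in two different ways. Concretely, to get (\ref{r02}) I would take a product of six homogeneous elements whose degrees are $a^{\prime},c^{\prime},d^{\prime},f^{\prime},g^{\prime},h^{\prime}$ together with one extra factor of degree $b^{\prime}$, and move the $b^{\prime}$-factor from one side of the block $c^{\prime}d^{\prime}f^{\prime}g^{\prime}$ to the other: done in one step (treating $c^{\prime}+d^{\prime}+f^{\prime}+g^{\prime}$ as a single middle argument via associativity) it costs $\mathbf{\rho}_{0}(a^{\prime},c^{\prime}+d^{\prime}+f^{\prime}+g^{\prime},b^{\prime},h^{\prime})$; done by four successive elementary middle-swaps that walk the $b^{\prime}$-factor past $c^{\prime}$, then $d^{\prime}$, then $f^{\prime}$, then $g^{\prime}$, it costs the product of the four $\mathbf{\rho}_{0}$'s on the right of (\ref{r02}). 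Equating the two coefficients (and using (\ref{r01}) to fix the direction of each swap) yields (\ref{r02}); relation (\ref{r03}) is obtained the same way with the roles of the two "position" slots of $\mathbf{\rho}_{0}$ interchanged, i.e. by moving a single $g^{\prime}$-factor past a block $b^{\prime}c^{\prime}d^{\prime}f^{\prime}$ sitting in the other middle position. I would also note the fully symmetric consequence analogous to (\ref{e00}), $\mathbf{\rho}_{0}(a^{\prime},b^{\prime}+c^{\prime},d^{\prime},e^{\prime}+f^{\prime})=$ product of the four $\mathbf{\rho}_{0}(\cdot,\cdot,\cdot,\cdot)$'s, as a corollary.

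The main obstacle --- and the only place real care is needed --- is bookkeeping: unlike the binary case, where "swap two elements" is unambiguous, here one must be scrupulous about \emph{which} four consecutive factors a given application of $\mathbf{\rho}_{0}$ acts on, in what order the elementary swaps are performed, and whether each swap contributes $\mathbf{\rho}_{0}$ or $\mathbf{\rho}_{0}^{-1}$ (this is why (\ref{r01}) must be established first). A secondary subtlety is that the relations as displayed in (\ref{r02})--(\ref{r03}) are asserted for \emph{arbitrary} degrees in $G$, not merely for degrees actually realized by nonzero homogeneous elements of $\mathcal{A}$; strictly speaking one gets the identities only on the support of the grading, and to state them for all of $G$ one either assumes the algebra is "full" (every degree occurs, e.g. a graded division algebra or cross product as in the Preliminaries) or reads (\ref{r02})--(\ref{r03}) as identities in the multiplier group generated by the realized values. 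I would flag this once and then carry out the computation on homogeneous elements as above.
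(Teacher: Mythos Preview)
Your proposal is correct and follows essentially the same route as the paper: (\ref{r01}) comes from applying (\ref{r2}) twice, and (\ref{r02})--(\ref{r03}) come from moving a single letter $b$ (resp.\ $g$) across a four-letter block inside a length-seven word in two ways---once in a single swap treating the block as one graded element, once by four elementary middle-swaps---and equating the accumulated $\mathbf{\rho}_{0}$ factors. Your remark that (\ref{r01}) must be in hand first to normalize the direction of each swap, and your caveat about support of the grading, are both sound refinements that the paper leaves implicit.
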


\begin{proof}
As in (\ref{e01}), the relation (\ref{r01}) follows from applying (\ref{r2})
twice. The next ones follow from permutation in two ways using (\ref{r1}): for
(\ref{r02})%
\begin{align}
a\cdot b\cdot\left(  c\cdot d\cdot f\cdot g\right)  \cdot h  &  \mapsto
a\cdot\left(  c\cdot d\cdot f\cdot g\right)  \cdot b\cdot
h,\ \ \ \ a,c,d,f,g,b,h\in A,\\
a\cdot b\cdot c\cdot d\cdot f\cdot g\cdot h  &  \mapsto a\cdot c\cdot b\cdot
d\cdot f\cdot g\cdot h\mapsto a\cdot c\cdot d\cdot b\cdot f\cdot g\cdot
h\nonumber\\
&  \mapsto a\cdot c\cdot d\cdot f\cdot b\cdot g\cdot h\mapsto a\cdot c\cdot
d\cdot f\cdot g\cdot b\cdot h,
\end{align}
and for (\ref{r03})%
\begin{align}
a\cdot\left(  b\cdot c\cdot d\cdot f\right)  \cdot g\cdot h  &  \mapsto a\cdot
g\cdot\left(  b\cdot c\cdot d\cdot f\right)  \cdot h,\\
a\cdot b\cdot c\cdot d\cdot f\cdot g\cdot h  &  \mapsto a\cdot b\cdot c\cdot
d\cdot g\cdot f\cdot h\mapsto a\cdot b\cdot c\cdot g\cdot d\cdot f\cdot
h\nonumber\\
&  \mapsto a\cdot b\cdot g\cdot c\cdot d\cdot f\cdot h\mapsto a\cdot g\cdot
b\cdot c\cdot d\cdot f\cdot h.
\end{align}

\end{proof}

\begin{assertion}
If the $0$-level almost medial algebra $\mathcal{A}_{2}^{\left(  \rho
_{0}\right)  }$ is cancellative, then it is isomorphic to an almost
commutative algebra.
\end{assertion}

\begin{proof}
After cancellation by $a$ and $d$ in (\ref{r2}), we obtain $\varepsilon
_{0}\left(  b^{\prime},c^{\prime}\right)  b\cdot c=c\cdot b$, where%
\begin{equation}
\varepsilon_{0}\left(  b^{\prime},c^{\prime}\right)  =\mathbf{\rho}_{0}\left(
a^{\prime},b^{\prime},c^{\prime},d^{\prime}\right)  .
\end{equation}
In case $\mathcal{A}_{2}^{\left(  \rho_{0}\right)  }$ is unital, one can
choose $\varepsilon_{0}\left(  b^{\prime},c^{\prime}\right)  =\mathbf{\rho
}_{0}\left(  e^{\prime},b^{\prime},c^{\prime},e^{\prime}\right)
\equiv\mathbf{\rho}_{0}\left(  0^{\prime},b^{\prime},c^{\prime},0^{\prime
}\right)  $, since the identity $e\in A$ is zero graded.
\end{proof}

\subsection{Tower of higher binary mediality brackets}

By analogy with (\ref{L0}), let us deform the medial twisted product
$\mathbf{\mu}_{4}^{\left(  \rho_{0}\right)  }$ (\ref{r1}) by the function
$M_{0}^{\left(  \rho_{0}\right)  }:A\times A\times A\times A\rightarrow A$ as
follows%
\begin{equation}
\mathbf{\rho}_{0}\left(  a^{\prime},b^{\prime},c^{\prime},d^{\prime}\right)
a\cdot b\cdot c\cdot d=a\cdot c\cdot b\cdot d+M_{0}^{\left(  \rho_{0}\right)
}\left(  a,b,c,d\right)  ,\ \ \ \forall a,b,c,d\in A,\ \ \ a^{\prime
},b^{\prime},c^{\prime},d^{\prime}\in G, \label{M0}%
\end{equation}
where $\mathbf{\rho}_{0}$ is a $0$-level mediality factor (\ref{r1}) which
satisfies (\ref{r01})--(\ref{r03}).

Let us next introduce a $4$-ary multiplication $\mathbf{\mu}_{4}^{\left(
\rho_{0},M_{0}\right)  }\left[  a,b,c,d\right]  =M_{0}^{\left(  \rho
_{0}\right)  }\left(  a,b,c,d\right)  $, $\forall a,b,c,d\in A$.

\begin{definition}
A $4$-ary algebra
\begin{equation}
\mathcal{A}_{4}^{\left(  \rho_{0},M_{0}\right)  }=\left\langle A\mid
\mathbf{\mu}_{4}^{\left(  \rho_{0},M_{0}\right)  }\right\rangle
\end{equation}
is called a $0$\textit{-level medial bracket algebra}.
\end{definition}

\begin{proposition}
The $4$-ary algebra $\mathcal{A}_{4}^{\left(  \rho_{0},M_{0}\right)  }$ is
almost medial with the mediality factor $\left(  -\mathbf{\rho}_{0}%
^{-1}\right)  $.
\end{proposition}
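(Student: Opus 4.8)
The plan is to mimic exactly the proof of the earlier proposition stating that the $0$-level bracket algebra $\mathcal{A}_{2}^{L_{0}}$ is almost commutative with commutation factor $\left(-\varepsilon_{0}^{-1}\right)$, but now in the medial setting with four slots instead of two. First I would start from the defining deformation equation (\ref{M0}) for $M_{0}^{\left(\rho_{0}\right)}\left(a,b,c,d\right)$ and write down the companion equation obtained by performing the medial swap on the pair $(b,c)$, i.e. replacing the quadruple $\left(a,b,c,d\right)$ by $\left(a,c,b,d\right)$ throughout (\ref{M0}). This gives
\begin{equation}
\mathbf{\rho}_{0}\left(a^{\prime},c^{\prime},b^{\prime},d^{\prime}\right)a\cdot c\cdot b\cdot d=a\cdot b\cdot c\cdot d+M_{0}^{\left(\rho_{0}\right)}\left(a,c,b,d\right).\nonumber
\end{equation}

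Next I would combine the two equations to eliminate the associative products $a\cdot b\cdot c\cdot d$ and $a\cdot c\cdot b\cdot d$. Multiplying the second equation by $\mathbf{\rho}_{0}\left(a^{\prime},b^{\prime},c^{\prime},d^{\prime}\right)$ and substituting from the first, the two-fold swap normalization (\ref{r01}), namely $\mathbf{\rho}_{0}\left(a^{\prime},b^{\prime},c^{\prime},d^{\prime}\right)\mathbf{\rho}_{0}\left(a^{\prime},c^{\prime},b^{\prime},d^{\prime}\right)=\mathtt{1}$, collapses the left-hand side and yields a relation of the form
\begin{equation}
\mathbf{\rho}_{0}\left(a^{\prime},b^{\prime},c^{\prime},d^{\prime}\right)M_{0}^{\left(\rho_{0}\right)}\left(a,c,b,d\right)+M_{0}^{\left(\rho_{0}\right)}\left(a,b,c,d\right)=0.\nonumber
\end{equation}
Rearranging and using (\ref{r01}) once more to pass from $\mathbf{\rho}_{0}\left(a^{\prime},b^{\prime},c^{\prime},d^{\prime}\right)$ to $-\mathbf{\rho}_{0}^{-1}\left(a^{\prime},c^{\prime},b^{\prime},d^{\prime}\right)$ (or the appropriately indexed inverse), one gets
\begin{equation}
\left(-\mathbf{\rho}_{0}^{-1}\left(a^{\prime},b^{\prime},c^{\prime},d^{\prime}\right)\right)M_{0}^{\left(\rho_{0}\right)}\left(a,b,c,d\right)=M_{0}^{\left(\rho_{0}\right)}\left(a,c,b,d\right),\nonumber
\end{equation}
which is precisely the almost-mediality relation (\ref{r2}) for the $4$-ary multiplication $\mathbf{\mu}_{4}^{\left(\rho_{0},M_{0}\right)}$ with mediality factor $\left(-\mathbf{\rho}_{0}^{-1}\right)$, since by definition $\mathbf{\mu}_{4}^{\left(\rho_{0},M_{0}\right)}\left[a,b,c,d\right]=M_{0}^{\left(\rho_{0}\right)}\left(a,b,c,d\right)$ and the medially symmetric product is obtained by the $(b,c)$-swap.

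The only genuinely delicate point — and the step I would expect to require the most care — is bookkeeping the argument ordering of $\mathbf{\rho}_{0}$ when inverting: one must check that the factor appearing is consistent with the sign/index conventions of (\ref{r2}) and (\ref{r01}), so that the claimed factor is $\left(-\mathbf{\rho}_{0}^{-1}\right)$ evaluated on the \emph{same} quadruple $\left(a^{\prime},b^{\prime},c^{\prime},d^{\prime}\right)$ rather than on a permuted one; this parallels exactly the passage in the binary case from $\left(-\varepsilon_{0}\left(b^{\prime},a^{\prime}\right)\right)$ to $\left(-\varepsilon_{0}^{-1}\left(a^{\prime},b^{\prime}\right)\right)$ via (\ref{e01}). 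Everything else is a routine two-line manipulation, entirely analogous to the proof already given for $\mathcal{A}_{2}^{L_{0}}$, so no new idea is needed beyond the substitution $\varepsilon_{0}\rightsquigarrow\mathbf{\rho}_{0}$, $L_{0}\rightsquigarrow M_{0}$, and "opposite product" $\rightsquigarrow$ "medially symmetric product".
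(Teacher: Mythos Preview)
Your proposal is correct and follows essentially the same approach as the paper: both start from (\ref{M0}), write its $(b,c)$-swapped companion, use (\ref{r01}) to eliminate the associative products, and then invoke (\ref{r01}) once more to convert $-\mathbf{\rho}_{0}\left(a^{\prime},c^{\prime},b^{\prime},d^{\prime}\right)$ into $-\mathbf{\rho}_{0}^{-1}\left(a^{\prime},b^{\prime},c^{\prime},d^{\prime}\right)$. Your identification of the argument with the earlier $\mathcal{A}_{2}^{L_{0}}$ proof and your attention to the index bookkeeping are exactly on point.
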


\begin{proof}
Using (\ref{M0}) and (\ref{r01})--(\ref{r03}) we get $\mathbf{\rho}_{0}\left(
a^{\prime},c^{\prime},b^{\prime},d^{\prime}\right)  M_{0}^{\left(  \rho
_{0}\right)  }\left(  a,b,c,d\right)  +M_{0}^{\left(  \rho_{0}\right)
}\left(  a,c,b,d\right)  =0$, which can be rewritten in the almost medial form
(\ref{r2}) as $\left(  -\mathbf{\rho}_{0}\left(  a^{\prime},c^{\prime
},b^{\prime},d^{\prime}\right)  \right)  M_{0}^{\left(  \rho_{0}\right)
}\left(  a,b,c,d\right)  =M_{0}^{\left(  \rho_{0}\right)  }\left(
a,c,b,d\right)  $. From (\ref{r01}) we get%
\begin{equation}
\left(  -\mathbf{\rho}_{0}^{-1}\left(  a^{\prime},b^{\prime},c^{\prime
},d^{\prime}\right)  \right)  M_{0}^{\left(  \rho_{0}\right)  }\left(
a,b,c,d\right)  =M_{0}^{\left(  \rho_{0}\right)  }\left(  a,c,b,d\right)  .
\end{equation}

\end{proof}

Let us \textquotedblleft deform\textquotedblright\ (\ref{r2}) again
successively by introducing further \textquotedblleft
deforming\textquotedblright\ functions $M_{k}$ and higher level mediality
factors $\mathbf{\rho}_{k}:G\times G\times G\times G\rightarrow\Bbbk$ in the
following way.

\begin{definition}
The $k$\textit{-level mediality factor} $\mathbf{\rho}_{k}\left(  a^{\prime
},b^{\prime},c^{\prime},d^{\prime}\right)  $ is defined by the following
\textquotedblleft difference-like\textquotedblright\ equations%
\begin{align}
\mathbf{\rho}_{1}\left(  a^{\prime},b^{\prime},c^{\prime},d^{\prime}\right)
M_{0}^{\left(  \rho_{0}\right)  }\left(  a,b,c,d\right)   &  =M_{0}^{\left(
\rho_{0}\right)  }\left(  a,c,b,d\right)  +M_{1}^{\left(  \rho_{0},\rho
_{1}\right)  }\left(  a,b,c,d\right)  ,\label{m1}\\
\mathbf{\rho}_{2}\left(  a^{\prime},b^{\prime},c^{\prime},d^{\prime}\right)
M_{1}^{\left(  \rho_{0}\right)  }\left(  a,b,c,d\right)   &  =M_{1}^{\left(
\rho_{0}\right)  }\left(  a,c,b,d\right)  +M_{2}^{\left(  \rho_{0},\rho
_{1},\rho_{2}\right)  }\left(  a,b,c,d\right)  ,\\
&  \vdots\nonumber\\
\mathbf{\rho}_{k}\left(  a^{\prime},b^{\prime},c^{\prime},d^{\prime}\right)
M_{k-1}^{\left(  \rho_{0},\rho_{1},\ldots,\rho_{k-1}\right)  }\left(
a,b,c,d\right)   &  =M_{k-1}^{\left(  \rho_{0},\rho_{1},\ldots,\rho
_{k-1}\right)  }\left(  a,c,b,d\right)  +M_{k}^{\left(  \rho_{0},\rho
_{1},\ldots,\rho_{k}\right)  }\left(  a,b,c,d\right)  ,\label{mk}\\
\forall a,b,c,d  &  \in A,\ \ \ a^{\prime},b^{\prime},c^{\prime},d^{\prime}\in
G.\nonumber
\end{align}

\end{definition}

\begin{definition}
$k$\textit{-level almost mediality} is defined by the vanishing of the last
\textquotedblleft deforming\textquotedblright\ medial function%
\begin{equation}
M_{k}^{\left(  \rho_{0},\rho_{1},\ldots,\rho_{k}\right)  }\left(
a,b,c,d\right)  =0,\ \ \ \forall a,b,c,d\in A,
\end{equation}
and can be expressed in a form analogous to (\ref{e0}) and (\ref{r2})%
\begin{equation}
\mathbf{\rho}_{k}\left(  a^{\prime},b^{\prime},c^{\prime},d^{\prime}\right)
M_{k-1}^{\left(  \rho_{0},\rho_{1},\ldots,\rho_{k-1}\right)  }\left(
a,b,c,d\right)  =M_{k-1}^{\left(  \rho_{0},\rho_{1},\ldots,\rho_{k-1}\right)
}\left(  a,c,b,d\right)  .
\end{equation}

\end{definition}

\begin{proposition}
The higher level \textquotedblleft deforming\textquotedblright\ functions
$M_{i}^{\left(  \rho_{0},\rho_{1},\ldots,\rho_{i}\right)  }\left(
a,b,c,d\right)  $, $i=1,\ldots,k$ can be expressed through $M_{0}^{\left(
\rho_{0}\right)  }\left(  a,b,c,d\right)  $ from (\ref{M0}) multiplied by a
combination of the lower level mediality factors $\mathbf{\rho}_{i}\left(
a^{\prime},b^{\prime},c^{\prime},d^{\prime}\right)  $, $i=1,\ldots,k$.
\end{proposition}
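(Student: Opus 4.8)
The plan is to proceed by induction on the level $i$, exactly mirroring the binary almost-commutative case (the Proposition preceding the tower of $\varepsilon$-Lie brackets and its Corollary), but now with the medial permutation $(a,b,c,d)\mapsto(a,c,b,d)$ in place of the transposition $(a,b)\mapsto(b,a)$. For the base case $i=1$, I would simply rearrange the defining equation \eqref{m1}: it reads
\begin{equation*}
M_{1}^{\left(  \rho_{0},\rho_{1}\right)  }\left(  a,b,c,d\right)
=\mathbf{\rho}_{1}\left(  a^{\prime},b^{\prime},c^{\prime},d^{\prime}\right)
M_{0}^{\left(  \rho_{0}\right)  }\left(  a,b,c,d\right)
-M_{0}^{\left(  \rho_{0}\right)  }\left(  a,c,b,d\right)  ,
\end{equation*}
and then I would invoke the already-established relation (from the Proposition stating that $\mathcal{A}_{4}^{\left(  \rho_{0},M_{0}\right)  }$ is almost medial with factor $-\mathbf{\rho}_{0}^{-1}$) that $M_{0}^{\left(  \rho_{0}\right)  }\left(  a,c,b,d\right) =\left(  -\mathbf{\rho}_{0}^{-1}\left(  a^{\prime},b^{\prime},c^{\prime},d^{\prime}\right)  \right)  M_{0}^{\left(  \rho_{0}\right)  }\left(  a,b,c,d\right)$. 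Substituting gives
\begin{equation*}
M_{1}^{\left(  \rho_{0},\rho_{1}\right)  }\left(  a,b,c,d\right)
=\left[  \mathbf{\rho}_{1}\left(  a^{\prime},b^{\prime},c^{\prime},d^{\prime
}\right)  +\mathbf{\rho}_{0}^{-1}\left(  a^{\prime},b^{\prime},c^{\prime
},d^{\prime}\right)  \right]  M_{0}^{\left(  \rho_{0}\right)  }\left(
a,b,c,d\right)  ,
\end{equation*}
which is precisely the asserted form: $M_0^{(\rho_0)}$ times a combination of lower-level mediality factors.

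For the inductive step, assume the claim holds for $M_{i-1}^{\left(  \rho_{0},\ldots,\rho_{i-1}\right)  }\left(  a,b,c,d\right)  =P_{i-1}\left(  \mathbf{\rho}_{0},\ldots,\mathbf{\rho}_{i-1}\right)  M_{0}^{\left(  \rho_{0}\right)  }\left(  a,b,c,d\right)$ for some scalar coefficient $P_{i-1}$ depending only on the gradings $a^{\prime},b^{\prime},c^{\prime},d^{\prime}$. Then from the level-$i$ equation \eqref{mk} (with $k$ replaced by $i$),
\begin{equation*}
M_{i}^{\left(  \rho_{0},\ldots,\rho_{i}\right)  }\left(  a,b,c,d\right)
=\mathbf{\rho}_{i}\,M_{i-1}^{\left(  \rho_{0},\ldots,\rho_{i-1}\right)
}\left(  a,b,c,d\right)  -M_{i-1}^{\left(  \rho_{0},\ldots,\rho_{i-1}\right)
}\left(  a,c,b,d\right)  .
\end{equation*}
The first term is handled by the inductive hypothesis directly. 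For the second term I need to express $M_{i-1}^{\left(  \rho_{0},\ldots,\rho_{i-1}\right)  }\left(  a,c,b,d\right)$ back in terms of $M_0^{(\rho_0)}(a,b,c,d)$: applying the inductive hypothesis with the arguments medially swapped gives $M_{i-1}^{\left(  \rho_{0},\ldots,\rho_{i-1}\right)  }\left(  a,c,b,d\right)  =P_{i-1}'\,M_{0}^{\left(  \rho_{0}\right)  }\left(  a,c,b,d\right)$ where $P_{i-1}'$ is $P_{i-1}$ with the roles of $b^{\prime},c^{\prime}$ exchanged, and then one more application of the base-level identity $M_{0}^{\left(  \rho_{0}\right)  }\left(  a,c,b,d\right)  =-\mathbf{\rho}_{0}^{-1}M_{0}^{\left(  \rho_{0}\right)  }\left(  a,b,c,d\right)$ brings everything to a common form. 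Collecting,
\begin{equation*}
M_{i}^{\left(  \rho_{0},\ldots,\rho_{i}\right)  }\left(  a,b,c,d\right)
=\left[  \mathbf{\rho}_{i}\,P_{i-1}+\mathbf{\rho}_{0}^{-1}\,P_{i-1}'\right]
M_{0}^{\left(  \rho_{0}\right)  }\left(  a,b,c,d\right)  ,
\end{equation*}
completing the induction with an explicit recursion for the coefficient.

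The routine verification that this recursion reproduces the displayed formulas for $M_1$ and $M_2$ can be left to the reader, and I would do so. The one point requiring a little care — the main (minor) obstacle — is bookkeeping the grading-argument permutations: the coefficient polynomials $P_i$ are not symmetric in $b^{\prime},c^{\prime}$, so when re-applying the hypothesis to the medially-swapped argument list one must faithfully track which $\mathbf{\rho}_j$ gets its middle two entries exchanged; conflating $P_{i-1}$ with $P_{i-1}'$ would give a wrong recursion. Apart from that the argument is a direct transcription of the binary proof (Proposition on higher $L_i$ and its Corollary) with the substitutions $\varepsilon\leftrightarrow\mathbf{\rho}$, transposition $\leftrightarrow$ medial swap, and two elements $\leftrightarrow$ four elements.
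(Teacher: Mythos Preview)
Your argument is correct and is essentially the same approach as the paper's: the paper's own proof is the single sentence ``It follows from the equations (\ref{m1})--(\ref{mk}),'' and your induction simply makes that claim explicit by unwinding the recursion and invoking the already-proved identity $M_{0}^{(\rho_{0})}(a,c,b,d)=-\mathbf{\rho}_{0}^{-1}(a',b',c',d')\,M_{0}^{(\rho_{0})}(a,b,c,d)$. One small correction: the paper does not display closed formulas for $M_{1}$ and $M_{2}$ (only for $L_{1}$ and $L_{2}$ in the commutative case), so there is nothing to cross-check your recursion against---your remark about ``the displayed formulas for $M_{1}$ and $M_{2}$'' should be dropped.
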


\begin{proof}
It follows from the equations (\ref{m1})--(\ref{mk}).
\end{proof}

\section{\textsc{Medial n-ary algebras}}

We now extend the concept of almost mediality from binary to polyadic
($n$-ary) algebras in the unique way which uses the construction from the
previous section.

Let $\mathcal{A}^{\left(  n\right)  }=\left\langle A\mid\mu_{n},\nu
_{2}\right\rangle $ be an associative $n$-ary algebra (with $n$-ary linear
multiplication $A^{\otimes n}\rightarrow A$) over a field $\Bbbk$ with
(possible) polyadic unit $e$ (then $\mathcal{A}^{\left(  n\right)  }$ a unital
$\Bbbk$-algebra) defined by $\mu_{n}\left[  e^{n-1},a\right]  =a$, $\forall
a\in A$ (where $a$ can be on any place) and (binary) zero $z\in A$. We
restrict ourselves (as in \cite{mil/vin,goz/goz/rem}) by the binary addition
$\nu_{2}:A\otimes A\rightarrow A$ which is denoted by $\left(  +\right)  $
(for more general cases, see \cite{dup2019}). Now \textit{polyadic (total)
associativity} \cite{goz/goz/rem} can be defined as a kind of invariance
\cite{dup2018a}%
\begin{equation}
\mu_{n}\left[  \mathbf{a},\mu_{n}\left[  \mathbf{b}_{\left(  n\right)
}\right]  ,\mathbf{c}\right]  =invariant, \label{mass}%
\end{equation}
where $\mathbf{a,c}$ are (\textit{linear}) \textit{polyads} (sequences of
elements from $A$) of the necessary length \cite{pos}, $\mathbf{b}_{\left(
n\right)  }$ is a polyad of the length $n$, and the internal multiplication
can be on any place. To describe the mediality for arbitrary arity $n$ we need
the following matrix generalization of polyads (as was implicitly used in
\cite{dup2018a,dup2019}).

\begin{definition}
\label{def-matr}A \textit{matrix (}$n$-ary) \textit{polyad} $\mathbf{\hat{A}%
}_{\left(  n^{2}\right)  }\equiv\mathbf{\hat{A}}_{\left(  n\times n\right)  }$
of size $n\times n$ is the sequence of $n^{2}$ elements $\mathbf{\hat{A}%
}_{\left(  n\times n\right)  }=\left(  a_{ij}\right)  \in A^{\otimes n^{2}}$,
$i,j=1,\ldots,n$, and their product $\mathrm{A}_{n^{2}}^{\left(  \mu\right)
}:A^{\otimes n^{2}}\rightarrow A$ contains $n+1$ of $n$-ary multiplications
$\mu_{n}$, which can be written as (we use hat for matrices of arguments, even
informally)%
\begin{equation}
\mathrm{A}_{n^{2}}^{\left(  \mu\right)  }\equiv\left(  \mu_{n}\right)
^{\circ\left(  n+1\right)  }\left[  \mathbf{\hat{A}}_{\left(  n^{2}\right)
}\right]  =\mu_{n}\left[
\begin{array}
[c]{c}%
\mu_{n}\left[  a_{11},a_{12},\ldots,a_{1n}\right]  ,\\
\mu_{n}\left[  a_{21},a_{22},\ldots,a_{2n}\right]  ,\\
\vdots\\
\mu_{n}\left[  a_{n1},a_{n2},\ldots,a_{nn}\right]
\end{array}
\right]  \in A \label{am}%
\end{equation}
due to the total associativity (\ref{mass}) (by \textquotedblleft omitting
brackets\textquotedblright).
\end{definition}

This construction is the stack reshape of a matrix or row-major order of an array.

\begin{example}
In terms of matrix polyads the (binary) mediality property (\ref{mm}) becomes%
\begin{align}
\left(  \mu_{2}\right)  ^{\circ3}\left[  \mathbf{\hat{A}}_{\left(  4\right)
}\right]   &  =\left(  \mu_{2}\right)  ^{\circ3}\left[  \mathbf{\hat{A}%
}_{\left(  4\right)  }^{T}\right]  ,\ \ \ or\ \ \ \mathrm{A}_{4}^{\left(
\mu\right)  }=\mathrm{A}_{4}^{T\left(  \mu\right)  }\label{mam}\\
\mathbf{\hat{A}}_{\left(  4\right)  }  &  =\left(
\begin{array}
[c]{cc}%
a_{11} & a_{12}\\
a_{21} & a_{22}%
\end{array}
\right)  \Rightarrow\left(  a_{11},a_{12},a_{21},a_{22}\right)  \in
A^{\otimes4},
\end{align}
where $\mathbf{\hat{A}}_{\left(  4\right)  }^{T}$ is the transposed polyad
matrix representing the sequence $\left(  a_{11},a_{21},a_{12},a_{22}\right)
\in A^{\otimes4}$, $\mathrm{A}_{4}^{\left(  \mu\right)  }=\left(  \left(
a_{11}\cdot a_{12}\right)  \cdot\left(  a_{21}\cdot a_{22}\right)  \right)
\in A$ and $\mathrm{A}_{4}^{T\left(  \mu\right)  }=\left(  \left(  a_{11}\cdot
a_{21}\right)  \cdot\left(  a_{12}\cdot a_{22}\right)  \right)  \in A$ with
$\left(  \cdot\right)  \equiv\mu_{2}$.
\end{example}

\begin{definition}
A \textit{polyadic (}$n$\textit{-ary) mediality} property is defined by the
relation%
\begin{align}
\left(  \mu_{n}\right)  ^{\circ n+1}\left[  \mathbf{\hat{A}}_{\left(
n^{2}\right)  }\right]   &  =\left(  \mu_{n}\right)  ^{\circ n+1}\left[
\mathbf{\hat{A}}_{\left(  n^{2}\right)  }^{T}\right]
,\ \ \ or\ \ \ \mathrm{A}_{n^{2}}^{\left(  \mu\right)  }=\mathrm{A}_{n^{2}%
}^{T\left(  \mu\right)  },\label{mna}\\
\mathbf{\hat{A}}_{\left(  n^{2}\right)  }  &  =\left(  a_{ij}\right)  \in
A^{\otimes n^{2}}. \label{an2}%
\end{align}

\end{definition}

\begin{definition}
A \textit{polyadic medial twist map} $\chi_{medial}^{\left(  n^{2}\right)  }$
is defined on the matrix polyads as \cite{dup2018d}%
\begin{equation}
\mathbf{\hat{A}}_{\left(  n^{2}\right)  }\overset{\chi_{medial}^{\left(
n^{2}\right)  }}{\mapsto}\mathbf{\hat{A}}_{\left(  n^{2}\right)  }^{T}.
\label{an}%
\end{equation}

\end{definition}

\begin{definition}
A $n$-ary algebra $\mathcal{A}^{\left(  n\right)  }$ is called \textit{medial}%
, if it satisfies the $n$-ary mediality property (\ref{mna}) for all
$a_{ij}\in A$.
\end{definition}

It follows from (\ref{mam}), that \textsf{not all} medial binary algebras are abelian.

\begin{corollary}
If a binary medial algebra $\mathcal{A}^{\left(  2\right)  }$ is cancellative,
it is abelian.
\end{corollary}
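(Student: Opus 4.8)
The plan is to start from the $n$-ary mediality relation $\mathrm{A}_{n^{2}}^{\left(  \mu\right)  }=\mathrm{A}_{n^{2}}^{T\left(  \mu\right)  }$ and cancel down to an ordinary commutativity statement on two coordinates, just as was done for the binary case in (\ref{mam}) and in the proof of the Assertion that a cancellative $0$-level almost medial algebra is isomorphic to an almost commutative algebra. Concretely, fix a generic matrix polyad $\mathbf{\hat{A}}_{\left(  n^{2}\right)  }=\left(  a_{ij}\right)$ and choose all entries equal to a fixed element, say $z$ (or the unit $e$ if present), except at two carefully selected off-diagonal positions which we leave as free variables $x$ and $y$; the transposition $\chi_{medial}^{\left(  n^{2}\right)}$ then swaps those two entries while leaving all the $z$'s fixed. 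By total associativity (\ref{mass}) we may "omit brackets" on both sides of (\ref{mna}), so each side is a single product of $n^{2}$ elements in a fixed linear order, differing only in that $x$ and $y$ are interchanged. Left- and right-cancellation (applied to the common prefix and common suffix of $z$'s surrounding each of $x$, $y$) then reduces the identity to $\mu$-products that, up to further padding by $z$'s, say exactly $x\cdot y = y\cdot x$ for all $x,y\in A$; hence the induced binary operation is commutative, and since $\mathcal{A}^{(2)}$ is associative (a binary algebra), this makes it abelian.

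The key steps, in order: (1) write out $\mathrm{A}_{n^{2}}^{\left(  \mu\right)}$ and $\mathrm{A}_{n^{2}}^{T\left(  \mu\right)}$ as bracket-free products of the sequences $(a_{11},\dots,a_{1n},a_{21},\dots,a_{nn})$ and $(a_{11},a_{21},\dots,a_{n1},a_{12},\dots,a_{nn})$ respectively, using Definition~\ref{def-matr} and (\ref{mass}); (2) specialize all $a_{ij}$ to a single element except two positions $(i_{1},j_{1})$ and $(i_{2},j_{2})$ chosen so that in the row-major order one of $x,y$ precedes the other, while in the column-major (transposed) order the order of $x,y$ is reversed — this is possible precisely because for $n\ge 2$ there exist two index pairs whose relative order is reversed by transposition; (3) cancel the common maximal prefix and suffix of repeated elements on the two sides, and cancel any repeated elements sitting strictly between $x$ and $y$ on one side against the mirror block on the other, arriving at a relation equivalent (after re-padding with the fixed element, using associativity to absorb the padding into a binary operation) to $xy=yx$; (4) conclude commutativity of the derived binary multiplication, and combine with the hypothesized binary associativity to get "abelian."

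The main obstacle I expect is step (3): when $n>2$ the two words are long and the blocks of repeated elements between the swapped positions are \emph{not} literally identical as subwords of the two sides — row-major and column-major orderings interleave the fixed entries differently — so a naive cancellation does not immediately leave $xy=yx$. The honest fix is to be more careful in step (2): rather than fixing \emph{all} other entries to one element, choose the two free positions to be \emph{adjacent in both orderings}, e.g. $(1,1)$ and $(1,2)$ versus their transposes $(1,1)$ and $(2,1)$ — but these do not transpose into each other. A cleaner choice is to take the free entries at positions $(1,2)$ and $(2,1)$: under transposition these two are exactly swapped, and with every other $a_{ij}=z$ the two sides of (\ref{mna}) become identical words except that the subword $\ldots z\, x\, \underbrace{z\cdots z}_{}\, y\, z\ldots$ on the left is replaced by $\ldots z\, y\, \underbrace{z\cdots z}_{}\, x\, z\ldots$ on the right, with the \emph{same} run of $z$'s in between (since the row-major positions strictly between $(1,2)$ and $(2,1)$ are exactly $(1,3),\dots,(1,n),(2,1)$—wait, one must recount). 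So the residual care is purely combinatorial bookkeeping of which index pairs lie between the two chosen positions in each ordering; once the two chosen free positions are verified to bound the \emph{same} multiset of fixed entries in both the straight and transposed orderings, cancellativity finishes the argument and the corollary follows. I would phrase the final proof by invoking (\ref{mam}) directly — it already exhibits, for $n=2$, that $\mathrm{A}_4^{(\mu)}=\mathrm{A}_4^{T(\mu)}$ is $(a_{11}a_{12})(a_{21}a_{22})=(a_{11}a_{21})(a_{12}a_{22})$, and cancelling $a_{11}$ on the left and $a_{22}$ on the right gives $a_{12}a_{21}=a_{21}a_{12}$, i.e. commutativity, whence abelian.
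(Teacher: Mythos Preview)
Your final sentence is exactly the proof the paper intends: from the binary mediality identity $(a_{11}a_{12})(a_{21}a_{22})=(a_{11}a_{21})(a_{12}a_{22})$ (i.e.\ (\ref{mam})), associativity lets you drop brackets and cancellativity lets you strip $a_{11}$ on the left and $a_{22}$ on the right, yielding $a_{12}a_{21}=a_{21}a_{12}$. This is precisely the argument the paper has already spelled out in the group case (``$abcd=acbd\Rightarrow bc=cb$'') and in the proof of the Assertion on $0$-level almost medial algebras.

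The only issue is scope: the corollary is about the \emph{binary} case $n=2$, so everything you wrote before the last sentence --- the general $n$-ary matrix polyads, the combinatorics of row-major versus column-major positions, the worry about which entries lie between $(1,2)$ and $(2,1)$ --- is aimed at the \emph{next} statement in the paper (the Assertion about $n^{2}-2$ commutativity-like relations for cancellative $n$-ary medial algebras), not at this corollary. For the corollary itself you can and should simply write the two-line cancellation argument and stop.
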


\begin{assertion}
If a $n$-ary medial algebra $\mathcal{A}^{\left(  n\right)  }$ is
cancellative, each matrix polyad $\mathbf{\hat{A}}_{\left(  n^{2}\right)  }$
satisfies $n^{2}-2$ commutativity-like relations.
\end{assertion}

\section{\textsc{Almost medial n-ary graded algebras}}

The gradation for associative $n$-ary algebras was considered in
\cite{mil/vin,gne95}. Here we introduce a weaker version of gradation, because
we need to define the grading twist not for $n$-ary multiplication, i. e. the
polyads of the length $n$, but only for the matrix polyads (\ref{an2}) of the
length $n^{2}$ (for the binary case, see (\ref{m4a})).

\begin{definition}
An associative $n$-ary algebra $\mathcal{A}^{\left(  n\right)  }$ is called a
\textit{higher graded }$n$\textit{-ary algebra} over $\Bbbk$, if the algebra
multiplication of $n^{2}$ elements respects the gradation i.e.%
\begin{equation}
\left(  \mu_{n}\right)  ^{\circ n+1}\left[  A_{\left(  a_{ij}^{\prime}\right)
}\right]  \equiv\mu_{n}\left[
\begin{array}
[c]{c}%
\mu_{n}\left[  A_{a_{11}^{\prime}},A_{a_{12}^{\prime}},\ldots,A_{a_{1n}%
^{\prime}}\right]  ,\\
\mu_{n}\left[  A_{a_{21}^{\prime}},A_{a_{22}^{\prime}},\ldots,A_{a_{21}%
^{\prime}}\right]  ,\\
\vdots\\
\mu_{n}\left[  A_{a_{n1}^{\prime}},A_{a_{n2}^{\prime}},\ldots,A_{a_{nn}%
^{\prime}}\right]
\end{array}
\right]  \subseteq A_{a_{11}^{\prime}+\ldots+a_{nn}^{\prime}},\ \forall
a_{ij}^{\prime}\in G,\ \ i,j=1,\ldots,n
\end{equation}
where equality corresponds to \textit{ strong higher gradation}.
\end{definition}

Let us define the \textit{higher twisting function} (\textit{higher twist
factor}) for $n^{2}$ elements $\mathbf{\tau}_{n^{2}}:G^{\times n^{2}%
}\rightarrow\Bbbk$ by using matrix polyads (for $n=2$ see (\ref{m4})).

\begin{definition}
A\textit{ }$n$\textit{-ary higher graded twisted product} $\mathbf{\mu}%
_{n^{2}}^{\left(  \tau\right)  }$ is defined for homogeneous elements by%
\begin{equation}
\mathbf{\mu}_{n^{2}}^{\left(  \tau\right)  }\left[  \mathbf{\hat{A}}_{\left(
n^{2}\right)  }\right]  =\mathbf{\tau}_{n^{2}}\left(  \mathbf{\hat{A}%
}_{\left(  n^{2}\right)  }^{\prime}\right)  \mathrm{A}_{n^{2}}^{\left(
\mu\right)  },\ \ \ a_{ij}\in A;\ \ a_{ij}^{\prime}\in G,\ \ i,j=1,\ldots,n,
\end{equation}
where $\mathbf{\hat{A}}_{\left(  n^{2}\right)  }=\left(  a_{ij}\right)  \in
A^{\otimes n^{2}}$ is the matrix polyad of elements (\ref{an2}), and
$\mathbf{\hat{A}}_{\left(  n^{2}\right)  }^{\prime}=\left(  a_{ij}^{\prime
}\right)  \in G^{\otimes n^{2}}$ is the matrix polyad of their gradings .
\end{definition}

A medial analog of $n$-ary almost mediality can be introduced in a way
analogous to the binary case (\ref{r2}).

\begin{definition}
If the higher twisted product coincides with the medially symmetric product
(see (\ref{an})) for all $a_{ij}\in A$, we call the twisting function a
$0$-\textit{level }$n$-\textit{ary mediality factor }$\mathbf{\tau}_{n^{2}%
}\mapsto\mathbf{\rho}_{0}^{\left(  n^{2}\right)  }:G^{\times n^{2}}%
\rightarrow\Bbbk^{\times}$, such that (cf. (\ref{e0}))%
\begin{align}
\mathbf{\mu}_{n^{2}}^{\left(  \rho_{0}\right)  }\left[  \mathbf{\hat{A}%
}_{\left(  n^{2}\right)  }\right]   &  =\mathrm{A}_{n^{2}}^{T\left(
\mu\right)  },\ \ \text{or}\label{rn1}\\
\mathbf{\rho}_{0}^{\left(  n^{2}\right)  }\left(  \mathbf{\hat{A}}_{\left(
n^{2}\right)  }^{\prime}\right)  \mathrm{A}_{n^{2}}^{\left(  \mu\right)  }  &
=\mathrm{A}_{n^{2}}^{T\left(  \mu\right)  },\ \ \ a_{ij}\in A;\ \ a_{ij}%
^{\prime}\in G,\ \ i,j=1,\ldots,n. \label{rn2}%
\end{align}

\end{definition}

It follows from (\ref{rn2}) that the normalization condition for the $n$-ary
mediality factor is%
\begin{equation}
\mathbf{\rho}_{0}^{\left(  n^{2}\right)  }\left(  \overset{n^{2}}%
{\overbrace{a^{\prime},\ldots,a^{\prime}}}\right)  =\mathtt{1},\ \ \ \forall
a^{\prime}\in G.
\end{equation}

\begin{assertion}
The $0$-level $n$-ary mediality factor $\rho_{0}^{\left(  n^{2}\right)  }$
satisfies%
\begin{equation}
\mathbf{\rho}_{0}^{\left(  n^{2}\right)  }\left(  \mathbf{\hat{A}}_{\left(
n^{2}\right)  }^{\prime}\right)  \mathbf{\rho}_{0}^{\left(  n^{2}\right)
}\left(  \left(  \mathbf{\hat{A}}_{\left(  n^{2}\right)  }^{\prime}\right)
^{T}\right)  =\mathtt{1}. \label{rr}%
\end{equation}

\end{assertion}

\begin{proof}
It follows from (\ref{rn2}) and its transpose together with the relation
$\left(  B^{T}\right)  ^{T}=B$ for any matrix over $\Bbbk$.
\end{proof}

\begin{definition}
An $n$-ary algebra for which the higher twisted product coincides with the
medially symmetric product (\ref{rn2}), is called a $0$\textit{-level almost
medial} ($\mathbf{\rho}_{0}$-commutative) $n$-ary algebra $\mathcal{A}%
_{n}^{\left(  \rho_{0}\right)  }$.
\end{definition}

Recall \cite{bourbaki98}, that a tensor product of binary algebras can be
naturally endowed with a $\varepsilon_{0}$-graded structure in the following
way (in our notation). Let $\mathcal{A}_{2}^{\left(  \varepsilon_{0}\right)
}=\left\langle A\mid\mu_{2}^{\left(  a\right)  }\right\rangle $ and
$\mathcal{B}_{2}^{\left(  \varepsilon_{0}\right)  }=\left\langle A\mid\mu
_{2}^{\left(  b\right)  }\right\rangle $ be binary graded algebras with the
multiplications $\mu_{2}^{\left(  a\right)  }\equiv\left(  \cdot_{a}\right)  $
and $\mu_{2}^{\left(  b\right)  }\equiv\left(  \cdot_{b}\right)  $ and the
same commutation factor $\varepsilon_{0}$ (see (\ref{e0})), that is the same
$G$-graded structure. Consider the tensor product $\mathcal{A}_{2}^{\left(
\varepsilon_{0}\right)  }\otimes\mathcal{B}_{2}^{\left(  \varepsilon
_{0}\right)  }$, and introduce the \textit{total }$\varepsilon_{0}%
$\textit{-graded multiplication} $\left(  \mathcal{A}_{2}^{\left(
\varepsilon_{0}\right)  }\otimes\mathcal{B}_{2}^{\left(  \varepsilon
_{0}\right)  }\right)  \star^{\left(  \varepsilon_{0}\right)  }\left(
\mathcal{A}_{2}^{\left(  \varepsilon_{0}\right)  }\otimes\mathcal{B}%
_{2}^{\left(  \varepsilon_{0}\right)  }\right)  \longrightarrow\mathcal{A}%
_{2}^{\left(  \varepsilon_{0}\right)  }\otimes\mathcal{B}_{2}^{\left(
\varepsilon_{0}\right)  }$ defined by the deformation (cf. (\ref{mt}))%
\begin{equation}
\varepsilon_{0}\left(  b_{1}^{\prime},a_{2}^{\prime}\right)  \left(
a_{1}\otimes b_{1}\right)  \star^{\left(  \varepsilon_{0}\right)  }\left(
a_{2}\otimes b_{2}\right)  ,\ b_{1}^{\prime},a_{2}^{\prime}\in G,a_{i}\in
A,b_{i}\in B,i=1,2. \label{abe}%
\end{equation}

\begin{proposition}
If the $\varepsilon_{0}$-graded multiplication (\ref{abe}) satisfies (cf.
(\ref{e0}))
\begin{equation}
\varepsilon_{0}\left(  b_{1}^{\prime},a_{2}^{\prime}\right)  \left(
a_{1}\otimes b_{1}\right)  \star^{\left(  \varepsilon_{0}\right)  }\left(
a_{2}\otimes b_{2}\right)  =\left(  a_{1}\cdot_{a}a_{2}\right)  \otimes\left(
b_{1}\cdot_{b}b_{2}\right)  , \label{ab1}%
\end{equation}
then $\mathcal{A}_{2}^{\left(  \varepsilon_{0}\right)  }\otimes\mathcal{B}%
_{2}^{\left(  \varepsilon_{0}\right)  }$ is a $\varepsilon_{0}$-graded
commutative algebra.
\end{proposition}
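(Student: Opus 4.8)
The plan is to reduce everything to the explicit multiplication rule hidden in the hypothesis. Since $\varepsilon_{0}$ is $\Bbbk^{\times}$-valued, the condition (\ref{ab1}) is equivalent to
\[
(a_{1}\otimes b_{1})\star^{\left(  \varepsilon_{0}\right)  }(a_{2}\otimes b_{2})=\varepsilon_{0}\bigl(b_{1}^{\prime},a_{2}^{\prime}\bigr)^{-1}\,(a_{1}\cdot_{a}a_{2})\otimes(b_{1}\cdot_{b}b_{2}),\qquad a_{i}\in A,\ b_{i}\in B,
\]
extended to all of $\mathcal{A}\otimes\mathcal{B}$ by bilinearity. I would first equip $\mathcal{A}\otimes\mathcal{B}$ with the $G$-grading $(a\otimes b)^{\prime}=a^{\prime}+b^{\prime}$; the displayed formula then shows at once that $\star^{\left(  \varepsilon_{0}\right)  }$ sends the product of homogeneous elements of degrees $a_{1}^{\prime}+b_{1}^{\prime}$ and $a_{2}^{\prime}+b_{2}^{\prime}$ into degree $(a_{1}^{\prime}+a_{2}^{\prime})+(b_{1}^{\prime}+b_{2}^{\prime})$, i.e. the gradation is respected in the sense of (\ref{maa}).

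Next I would verify that $\star^{\left(  \varepsilon_{0}\right)  }$ is associative, so that one genuinely has a graded algebra. Expanding $\bigl((a_{1}\otimes b_{1})\star^{\left(  \varepsilon_{0}\right)  }(a_{2}\otimes b_{2})\bigr)\star^{\left(  \varepsilon_{0}\right)  }(a_{3}\otimes b_{3})$ with the displayed rule and using associativity of $\cdot_{a}$ and $\cdot_{b}$ gives the scalar $\varepsilon_{0}(b_{1}^{\prime},a_{2}^{\prime})^{-1}\varepsilon_{0}(b_{1}^{\prime}+b_{2}^{\prime},a_{3}^{\prime})^{-1}$ times $(a_{1}\cdot_{a}a_{2}\cdot_{a}a_{3})\otimes(b_{1}\cdot_{b}b_{2}\cdot_{b}b_{3})$; the bicharacter relation (\ref{e02}) rewrites this scalar as $\varepsilon_{0}(b_{1}^{\prime},a_{2}^{\prime})^{-1}\varepsilon_{0}(b_{1}^{\prime},a_{3}^{\prime})^{-1}\varepsilon_{0}(b_{2}^{\prime},a_{3}^{\prime})^{-1}$. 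Carrying out the other bracketing and using (\ref{e03}) instead yields the identical scalar, so the two products coincide.

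Finally, $\varepsilon_{0}$-commutativity in the sense of (\ref{e0}): I would compute both $\varepsilon_{0}\bigl((a_{1}\otimes b_{1})^{\prime},(a_{2}\otimes b_{2})^{\prime}\bigr)\,(a_{1}\otimes b_{1})\star^{\left(  \varepsilon_{0}\right)  }(a_{2}\otimes b_{2})$ and $(a_{2}\otimes b_{2})\star^{\left(  \varepsilon_{0}\right)  }(a_{1}\otimes b_{1})$ as multiples of $(a_{1}\cdot_{a}a_{2})\otimes(b_{1}\cdot_{b}b_{2})$. On the left I expand $\varepsilon_{0}(a_{1}^{\prime}+b_{1}^{\prime},a_{2}^{\prime}+b_{2}^{\prime})$ by (\ref{e00}) into four factors, one of which, $\varepsilon_{0}(b_{1}^{\prime},a_{2}^{\prime})$, cancels the twist $\varepsilon_{0}(b_{1}^{\prime},a_{2}^{\prime})^{-1}$ from the product rule; on the right I use the $\varepsilon_{0}$-commutativity of $\mathcal{A}_{2}^{\left(  \varepsilon_{0}\right)  }$ and of $\mathcal{B}_{2}^{\left(  \varepsilon_{0}\right)  }$ to replace $a_{2}\cdot_{a}a_{1}$ by $\varepsilon_{0}(a_{1}^{\prime},a_{2}^{\prime})\,a_{1}\cdot_{a}a_{2}$ and $b_{2}\cdot_{b}b_{1}$ by $\varepsilon_{0}(b_{1}^{\prime},b_{2}^{\prime})\,b_{1}\cdot_{b}b_{2}$. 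Comparing the two scalars, all factors cancel in pairs except the single requirement $\varepsilon_{0}(a_{1}^{\prime},b_{2}^{\prime})\,\varepsilon_{0}(b_{2}^{\prime},a_{1}^{\prime})=\mathtt{1}$, which is precisely (\ref{e01}). Hence $\mathcal{A}_{2}^{\left(  \varepsilon_{0}\right)  }\otimes\mathcal{B}_{2}^{\left(  \varepsilon_{0}\right)  }$ is $\varepsilon_{0}$-commutative, with the very same commutation factor $\varepsilon_{0}$.

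The only genuine difficulty is the bookkeeping of the $\varepsilon_{0}$-factors; the structural reason the argument closes is that the twist $\varepsilon_{0}(b_{1}^{\prime},a_{2}^{\prime})^{-1}$ built into $\star^{\left(  \varepsilon_{0}\right)  }$ is exactly the $A$--$B$ "cross term" generated by the bicharacter identities (\ref{e02})--(\ref{e03}) (equivalently (\ref{e00})), so in each computation the extraneous factors cancel and one is left with (\ref{e01}). A minor point worth recording is that all identities are checked first on homogeneous generators and then extended by bilinearity, and that $\varepsilon_{0}$ being $\Bbbk^{\times}$-valued is what legitimizes dividing by it and passing from (\ref{ab1}) to the displayed product rule.
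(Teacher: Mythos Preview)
Your argument is correct. The paper does not actually supply a proof for this proposition: it is presented as a recall of a standard fact (with a citation to Bourbaki), and the associativity claim is split off into a separate proposition immediately afterward, whose ``proof'' is the single line ``This follows from (\ref{abe}), (\ref{ab1}) and the properties of the commutation factor $\varepsilon_{0}$ (\ref{e02})--(\ref{e03}).'' Your write-up therefore supplies all the detail the paper omits, and in fact merges the two propositions by checking grading, associativity, and $\varepsilon_{0}$-commutativity in one pass. The key mechanism you identify --- that the bicharacter relations (\ref{e02})--(\ref{e03}) (equivalently (\ref{e00})) produce exactly the cross term $\varepsilon_{0}(b_{1}^{\prime},a_{2}^{\prime})$ that the twist in $\star^{(\varepsilon_{0})}$ is designed to cancel, leaving only (\ref{e01}) to close the commutativity check --- is precisely the content the paper gestures at with its citation of (\ref{e02})--(\ref{e03}).

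One small organizational remark: since the paper isolates associativity as its own statement, you could trim your proof of the present proposition to just the grading and $\varepsilon_{0}$-commutativity verifications, and move your associativity paragraph to the following proposition. Nothing in your argument needs to change.
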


\begin{proposition}
If $\mathcal{A}_{2}^{\left(  \varepsilon_{0}\right)  }$ and $\mathcal{B}%
_{2}^{\left(  \varepsilon_{0}\right)  }$ are associative, then $\mathcal{A}%
_{2}^{\left(  \varepsilon_{0}\right)  }\otimes\mathcal{B}_{2}^{\left(
\varepsilon_{0}\right)  }$ is also associative.
\end{proposition}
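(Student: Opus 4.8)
The plan is to reduce associativity of $\star^{\left(\varepsilon_{0}\right)}$ to one bimultiplicativity identity for $\varepsilon_{0}$. First I would record the explicit form of the twisted product: solving \ref{ab1} for $\star^{\left(\varepsilon_{0}\right)}$ gives, on homogeneous elements $a_{i}\in A_{a_{i}^{\prime}}$, $b_{i}\in B_{b_{i}^{\prime}}$,
\begin{equation}
\left(a_{1}\otimes b_{1}\right)\star^{\left(\varepsilon_{0}\right)}\left(a_{2}\otimes b_{2}\right)=\varepsilon_{0}\left(b_{1}^{\prime},a_{2}^{\prime}\right)^{-1}\left(a_{1}\cdot_{a}a_{2}\right)\otimes\left(b_{1}\cdot_{b}b_{2}\right),
\end{equation}
extended bilinearly; since $A\otimes B=\bigoplus_{a^{\prime},b^{\prime}}A_{a^{\prime}}\otimes B_{b^{\prime}}$ this extension is well defined and it suffices to test associativity on homogeneous generators. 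Because $\mathcal{A}_{2}^{\left(\varepsilon_{0}\right)}$ and $\mathcal{B}_{2}^{\left(\varepsilon_{0}\right)}$ are graded in the sense of \ref{maa}, the products $a_{1}\cdot_{a}a_{2}$ and $b_{1}\cdot_{b}b_{2}$ are homogeneous of degrees $a_{1}^{\prime}+a_{2}^{\prime}$ and $b_{1}^{\prime}+b_{2}^{\prime}$, so the grading arguments of the twist at the second multiplication step are predictable.

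Next I would expand both bracketings of $\left(a_{1}\otimes b_{1}\right)$, $\left(a_{2}\otimes b_{2}\right)$, $\left(a_{3}\otimes b_{3}\right)$. Applying the displayed formula twice, together with the associativity of $\cdot_{a}$ and of $\cdot_{b}$ and the degree rule above, the left-bracketed triple product equals $\varepsilon_{0}(b_{1}^{\prime},a_{2}^{\prime})^{-1}\varepsilon_{0}(b_{1}^{\prime}+b_{2}^{\prime},a_{3}^{\prime})^{-1}$ times $\left(a_{1}\cdot_{a}a_{2}\cdot_{a}a_{3}\right)\otimes\left(b_{1}\cdot_{b}b_{2}\cdot_{b}b_{3}\right)$, while the right-bracketed one equals $\varepsilon_{0}(b_{2}^{\prime},a_{3}^{\prime})^{-1}\varepsilon_{0}(b_{1}^{\prime},a_{2}^{\prime}+a_{3}^{\prime})^{-1}$ times the same element. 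Hence the statement is equivalent to
\begin{equation}
\varepsilon_{0}\left(b_{1}^{\prime},a_{2}^{\prime}\right)\varepsilon_{0}\left(b_{1}^{\prime}+b_{2}^{\prime},a_{3}^{\prime}\right)=\varepsilon_{0}\left(b_{1}^{\prime},a_{2}^{\prime}+a_{3}^{\prime}\right)\varepsilon_{0}\left(b_{2}^{\prime},a_{3}^{\prime}\right),
\end{equation}
and expanding the left factor $\varepsilon_{0}(b_{1}^{\prime}+b_{2}^{\prime},a_{3}^{\prime})$ by \ref{e02} and the right factor $\varepsilon_{0}(b_{1}^{\prime},a_{2}^{\prime}+a_{3}^{\prime})$ by \ref{e03} collapses both sides to $\varepsilon_{0}(b_{1}^{\prime},a_{2}^{\prime})\varepsilon_{0}(b_{1}^{\prime},a_{3}^{\prime})\varepsilon_{0}(b_{2}^{\prime},a_{3}^{\prime})$, using that scalars in $\Bbbk^{\times}$ commute. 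This is exactly where the hypothesis is consumed: since $\mathcal{A}_{2}^{\left(\varepsilon_{0}\right)}$ and $\mathcal{B}_{2}^{\left(\varepsilon_{0}\right)}$ are associative $\varepsilon_{0}$-commutative algebras, the Assertion establishing \ref{e01}--\ref{e03} guarantees that $\varepsilon_{0}$ is bimultiplicative in each argument.

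A more structural variant, which I would mention as a remark rather than carry out, is to regard $\mathcal{A}_{2}^{\left(\varepsilon_{0}\right)}\otimes\mathcal{B}_{2}^{\left(\varepsilon_{0}\right)}$ as a $\left(G\times G\right)$-graded algebra with $a\otimes b$ of degree $\left(a^{\prime},b^{\prime}\right)$, and to recognize $\star^{\left(\varepsilon_{0}\right)}$ as the twist of the componentwise tensor-product multiplication by the function $\sigma\left(\left(a_{1}^{\prime},b_{1}^{\prime}\right),\left(a_{2}^{\prime},b_{2}^{\prime}\right)\right)=\varepsilon_{0}(b_{1}^{\prime},a_{2}^{\prime})^{-1}$ on $G\times G$; associativity is then the converse direction of the Proposition containing \ref{s}, and checking that $\sigma$ obeys the $2$-cocycle condition \ref{s} is once more exactly the identity displayed above. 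In either approach there is no real obstacle — everything is driven by the bimultiplicativity of $\varepsilon_{0}$, which the associativity hypothesis makes available — and the only point needing care is the bookkeeping of the twist arguments after the inner product is homogenized, i.e. using $\left(b_{1}\cdot_{b}b_{2}\right)^{\prime}=b_{1}^{\prime}+b_{2}^{\prime}$ and $\left(a_{2}\cdot_{a}a_{3}\right)^{\prime}=a_{2}^{\prime}+a_{3}^{\prime}$ rather than anything subtler.
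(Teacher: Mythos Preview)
Your proof is correct and follows essentially the same approach as the paper. The paper's proof is a one-line reference to (\ref{abe}), (\ref{ab1}) and the bimultiplicativity properties (\ref{e02})--(\ref{e03}) of $\varepsilon_{0}$; you have simply written out in full the computation that those references encode, arriving at exactly the identity the paper has in mind, and your additional cocycle remark is a pleasant but inessential reformulation.
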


\begin{proof}
This follows from (\ref{abe}), (\ref{ab1}) and the properties of the
commutation factor $\varepsilon_{0}$ (\ref{e02})--(\ref{e03}).
\end{proof}

In the matrix form (\ref{ab1}) becomes (with $\star^{\left(  \varepsilon
_{0}\right)  }\equiv\mu_{2}^{\star\left(  \varepsilon_{0}\right)  }$)
\begin{equation}
\varepsilon_{0}\left(  b_{1}^{\prime},a_{2}^{\prime}\right)  \mu_{2}%
^{\star\left(  \varepsilon_{0}\right)  }\left[
\begin{array}
[c]{c}%
\mu_{2}^{\otimes}\left[  a_{1},b_{1}\right] \\
\mu_{2}^{\otimes}\left[  a_{2},b_{2}\right]
\end{array}
\right]  =\mu_{2}^{\otimes}\left[
\begin{array}
[c]{c}%
\mu_{2}^{\left(  a\right)  }\left[  a_{1},a_{2}\right] \\
\mu_{2}^{\left(  b\right)  }\left[  b_{1},b_{2}\right]
\end{array}
\right]  , \label{em}%
\end{equation}
where $\mu_{2}^{\otimes}$ is the standard binary tensor product. For numerous
generalizations (including braidings), see, e.g., \cite{pen/pan/oys}, and
refs. therein.

Now we can extend (\ref{em}) to almost medial algebras.

\begin{definition}
Let $\mathcal{A}_{2}^{\left(  \rho_{0}\right)  }$ and $\mathcal{B}%
_{2}^{\left(  \rho_{0}\right)  }$ be two binary medial algebras with the same
mediality factor $\mathbf{\rho}_{0}$. The total $\mathbf{\rho}_{0}$-mediality
graded multiplication $\mu_{2}^{\star\left(  \rho_{0}\right)  }:\left(
\mathcal{A}_{2}^{\left(  \rho_{0}\right)  }\otimes\mathcal{B}_{2}^{\left(
\rho_{0}\right)  }\right)  \star^{\left(  \rho_{0}\right)  }\left(
\mathcal{A}_{2}^{\left(  \rho_{0}\right)  }\otimes\mathcal{B}_{2}^{\left(
\rho_{0}\right)  }\right)  \longrightarrow\mathcal{A}_{2}^{\left(  \rho
_{0}\right)  }\otimes\mathcal{B}_{2}^{\left(  \rho_{0}\right)  }$ is defined
by the mediality deformation (cf. (\ref{mt}))%
\begin{equation}
\mathbf{\rho}_{0}\left(
\begin{array}
[c]{cc}%
a_{1}^{\prime} & b_{1}^{\prime}\\
a_{2}^{\prime} & b_{2}^{\prime}%
\end{array}
\right)  \left(  a_{1}\otimes b_{1}\right)  \star^{\left(  \rho_{0}\right)
}\left(  a_{2}\otimes b_{2}\right)  ,\ b_{1}^{\prime},a_{2}^{\prime}\in
G,a_{i}\in A,b_{i}\in B,i=1,2. \label{r0}%
\end{equation}

\end{definition}

\begin{proposition}
If the $\mathbf{\rho}_{0}$-graded multiplication (\ref{r0}) satisfies (cf.
(\ref{e0}))
\begin{equation}
\mathbf{\rho}_{0}\left(
\begin{array}
[c]{cc}%
a_{1}^{\prime} & b_{1}^{\prime}\\
a_{2}^{\prime} & b_{2}^{\prime}%
\end{array}
\right)  \mu_{2}^{\star\left(  \rho_{0}\right)  }\left[
\begin{array}
[c]{c}%
\mu_{2}^{\otimes}\left[  a_{1},b_{1}\right] \\
\mu_{2}^{\otimes}\left[  a_{2},b_{2}\right]
\end{array}
\right]  =\mu_{2}^{\otimes}\left[
\begin{array}
[c]{c}%
\mu_{2}^{\left(  a\right)  }\left[  a_{1},a_{2}\right] \\
\mu_{2}^{\left(  b\right)  }\left[  b_{1},b_{2}\right]
\end{array}
\right]  , \label{rab}%
\end{equation}
then $\mathcal{A}_{2}^{\left(  \rho_{0}\right)  }\otimes\mathcal{B}%
_{2}^{\left(  \rho_{0}\right)  }$ is a $\mathbf{\rho}_{0}$-graded binary
(almost medial) algebra.
\end{proposition}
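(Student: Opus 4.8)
The plan is to verify the $0$-level almost mediality relation (\ref{r2}) directly for $\mathcal{A}_{2}^{\left(\rho_{0}\right)}\otimes\mathcal{B}_{2}^{\left(\rho_{0}\right)}$ with the multiplication $\star^{\left(\rho_{0}\right)}\equiv\mu_{2}^{\star\left(\rho_{0}\right)}$, taking as mediality factor of the tensor product the function $\mathbf{\rho}_{0}$ evaluated on the \emph{total} gradings $a_{i}^{\prime}+b_{i}^{\prime}$ of the four arguments (the medial analogue of the fact recalled above that the graded tensor product of $\varepsilon_{0}$-commutative algebras carries the commutation factor $\varepsilon_{0}$ on the total gradings). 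First I would fix four homogeneous elements $X_{i}=a_{i}\otimes b_{i}$, $i=1,2,3,4$, with $a_{i}\in A_{a_{i}^{\prime}}$, $b_{i}\in B_{b_{i}^{\prime}}$, so that the $\star^{\left(\rho_{0}\right)}$-grading of $X_{i}$ is $a_{i}^{\prime}+b_{i}^{\prime}\in G$, and note that, arguing exactly as in the associativity Proposition for the $\varepsilon_{0}$-graded tensor product but now from (\ref{rab}) and (\ref{r02})--(\ref{r03}), the fourfold product $X_{1}\star^{\left(\rho_{0}\right)}X_{2}\star^{\left(\rho_{0}\right)}X_{3}\star^{\left(\rho_{0}\right)}X_{4}$ is bracket-independent.

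Next, applying (\ref{rab}) as a rewriting rule three times, I would reduce this fourfold product to a scalar multiple of the ``plain'' component product,
\[
X_{1}\star^{\left(\rho_{0}\right)}X_{2}\star^{\left(\rho_{0}\right)}X_{3}\star^{\left(\rho_{0}\right)}X_{4}=\Lambda_{1234}\,\bigl[(a_{1}\cdot_{a}a_{2}\cdot_{a}a_{3}\cdot_{a}a_{4})\otimes(b_{1}\cdot_{b}b_{2}\cdot_{b}b_{3}\cdot_{b}b_{4})\bigr],
\]
where $\Lambda_{1234}\in\Bbbk^{\times}$ is an explicit product of three values of $\mathbf{\rho}_{0}$ (each inverted), each on a $2\times2$ block whose entries are partial sums of the gradings $a_{i}^{\prime},b_{i}^{\prime}$; the medially symmetric product $X_{1}\star^{\left(\rho_{0}\right)}X_{3}\star^{\left(\rho_{0}\right)}X_{2}\star^{\left(\rho_{0}\right)}X_{4}$ reduces in the same way to $\Lambda_{1324}\,\bigl[(a_{1}\cdot_{a}a_{3}\cdot_{a}a_{2}\cdot_{a}a_{4})\otimes(b_{1}\cdot_{b}b_{3}\cdot_{b}b_{2}\cdot_{b}b_{4})\bigr]$. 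Then I would invoke $0$-level almost mediality (\ref{r2}) separately in $\mathcal{A}_{2}^{\left(\rho_{0}\right)}$ and in $\mathcal{B}_{2}^{\left(\rho_{0}\right)}$ --- legitimate since both carry the \emph{same} factor $\mathbf{\rho}_{0}$ --- to write $a_{1}a_{3}a_{2}a_{4}=\mathbf{\rho}_{0}\left(a_{1}^{\prime},a_{2}^{\prime},a_{3}^{\prime},a_{4}^{\prime}\right)a_{1}a_{2}a_{3}a_{4}$ and $b_{1}b_{3}b_{2}b_{4}=\mathbf{\rho}_{0}\left(b_{1}^{\prime},b_{2}^{\prime},b_{3}^{\prime},b_{4}^{\prime}\right)b_{1}b_{2}b_{3}b_{4}$.

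Substituting these into the two reduced expressions, the asserted identity for $\mathcal{A}_{2}^{\left(\rho_{0}\right)}\otimes\mathcal{B}_{2}^{\left(\rho_{0}\right)}$ collapses to the purely scalar equality
\[
\mathbf{\rho}_{0}\bigl(a_{1}^{\prime}+b_{1}^{\prime},\ldots,a_{4}^{\prime}+b_{4}^{\prime}\bigr)\,\Lambda_{1234}=\Lambda_{1324}\,\mathbf{\rho}_{0}\left(a_{1}^{\prime},a_{2}^{\prime},a_{3}^{\prime},a_{4}^{\prime}\right)\mathbf{\rho}_{0}\left(b_{1}^{\prime},b_{2}^{\prime},b_{3}^{\prime},b_{4}^{\prime}\right),
\]
which I would verify by expanding every $\mathbf{\rho}_{0}$ occurring on a block with composite entries into products of $\mathbf{\rho}_{0}$ on single gradings $a_{i}^{\prime},b_{j}^{\prime}$ via the multiplicativity relations (\ref{r02})--(\ref{r03}) (and their analogues in the first and last slots), then cancelling the superfluous factors by the reciprocity (\ref{r01}) and the normalization (\ref{raa}); this is precisely parallel to the sign bookkeeping in the classical proof that a graded tensor product of $\varepsilon$-commutative algebras is $\varepsilon$-commutative. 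Finally I would observe that the function on the total gradings produced in this way again obeys (\ref{r01})--(\ref{r03}), being built from $\mathbf{\rho}_{0}$ by the same multiplicative operations, so that $\mathcal{A}_{2}^{\left(\rho_{0}\right)}\otimes\mathcal{B}_{2}^{\left(\rho_{0}\right)}$ is genuinely a $\mathbf{\rho}_{0}$-graded (almost medial) algebra.

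The main obstacle is the middle step: tracking how the $\mathbf{\rho}_{0}$-factors accumulate under the three applications of (\ref{rab}) once the block entries become partial sums $a_{i}^{\prime}+a_{j}^{\prime}$, $b_{i}^{\prime}+b_{j}^{\prime}$, and confirming that $\Lambda_{1234}$ and $\Lambda_{1324}$ differ by exactly $\mathbf{\rho}_{0}\left(a_{1}^{\prime},a_{2}^{\prime},a_{3}^{\prime},a_{4}^{\prime}\right)\mathbf{\rho}_{0}\left(b_{1}^{\prime},b_{2}^{\prime},b_{3}^{\prime},b_{4}^{\prime}\right)$ divided by $\mathbf{\rho}_{0}$ on the total gradings. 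The bookkeeping goes through for the same structural reason as in the commutative case --- bimultiplicativity of the mediality factor in each argument --- but since the cocycle-type relations (\ref{r02})--(\ref{r03}) are less symmetric than (\ref{e02})--(\ref{e03}), the expansions must be carried out in a fixed order for the cancellations to be transparent.
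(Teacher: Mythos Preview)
The paper does not actually supply a proof of this proposition: it is stated and then immediately used as a template for the $n$-ary generalization, exactly as the preceding $\varepsilon_{0}$-commutative analogue is stated without proof (the only argument in that block is the one-line associativity remark pointing to (\ref{e02})--(\ref{e03})). So there is nothing to compare at the level of technique; your proposal is considerably more detailed than anything the paper offers here.

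Your strategy---reduce both fourfold $\star^{(\rho_{0})}$-products to scalar multiples of $(a_{1}\cdot_{a}\cdots\cdot_{a}a_{4})\otimes(b_{1}\cdot_{b}\cdots\cdot_{b}b_{4})$ via three applications of (\ref{rab}), then invoke almost mediality (\ref{r2}) in each factor, then check the scalar identity using (\ref{r01})--(\ref{r03})---is the natural verification, and it is the medial analogue of the classical graded-tensor-product computation you correctly identify. The one point that deserves care is that the paper's cocycle-type relations (\ref{r02})--(\ref{r03}) are stated only for the \emph{middle} two slots of $\mathbf{\rho}_{0}$, not the outer ones, whereas your expansion of $\mathbf{\rho}_{0}$ on sums $a_{i}^{\prime}+b_{i}^{\prime}$ in all four positions implicitly needs bimultiplicativity in every slot. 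You flag this (``the cocycle-type relations \ldots\ are less symmetric''), and it is not fatal---the outer-slot versions can be derived by the same associativity argument the paper uses to obtain (\ref{r02})--(\ref{r03})---but you should say so explicitly rather than leave it as part of the ``bookkeeping.'' Otherwise the plan is sound and, in fact, constitutes the proof the paper omits.
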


Using the matrix form (\ref{rab}) one can generalize the $\mathbf{\rho}_{0}%
$-graded medial algebras to arbitrary arity.

Let $\mathcal{B}_{n}^{\left(  \rho_{0}\right)  ,1},\ldots,\mathcal{B}%
_{n}^{\left(  \rho_{0}\right)  ,n}$ be $n$ $\mathbf{\rho}_{0}$-graded (almost
medial) $n$-ary algebras ($\mathcal{B}_{n}^{\left(  \rho_{0}\right)
,i}=\left\langle B_{i}\mid\mu_{n}^{\left(  i\right)  }\right\rangle $) with
the same mediality factor $\mathbf{\rho}_{0}$ and the same graded structure.
Consider their tensor product $\mathcal{B}_{n}^{\left(  \rho_{0}\right)
,1}\otimes\ldots\otimes\mathcal{B}_{n}^{\left(  \rho_{0}\right)  ,n}$ and the
$\mathbf{\rho}_{0}$-graded $n$-ary multiplication $\mu_{n}^{\star\left(
\rho_{0}\right)  }$ on it.

\begin{proposition}
If the $\mathbf{\rho}_{0}$-graded $n$-ary multiplication $\mu_{n}%
^{\star\left(  \rho_{0}\right)  }$ satisfies (cf. (\ref{e0}))
\begin{align}
\mathbf{\rho}_{0}\left(
\begin{array}
[c]{ccc}%
b_{1}^{1\prime} & \ldots & b_{1}^{n\prime}\\
\vdots & \ldots & \vdots\\
b_{n}^{1\prime} & \ldots & b_{n}^{n\prime}%
\end{array}
\right)  \mu_{n}^{\star\left(  \rho_{0}\right)  }\left[
\begin{array}
[c]{c}%
\mu_{n}^{\otimes}\left[  b_{1}^{1},\ldots,b_{1}^{n}\right] \\
\vdots\\
\mu_{n}^{\otimes}\left[  b_{n}^{1},\ldots,b_{n}^{n}\right]
\end{array}
\right]   &  =\mu_{n}^{\otimes}\left[
\begin{array}
[c]{c}%
\mu_{n}^{\left(  1\right)  }\left[  b_{1}^{1},\ldots,b_{n}^{1}\right] \\
\vdots\\
\mu_{n}^{\left(  n\right)  }\left[  b_{n}^{1},\ldots,b_{n}^{n}\right]
\end{array}
\right]  ,\label{rm}\\
b_{1}^{i},\ldots,b_{n}^{i}  &  \in\mathcal{A}_{n}^{\left(  \rho_{0}\right)
,i},\ b_{n}^{i\prime},\ldots,b_{n}^{i\prime}\in G,\ i=1,\ldots,n.
\end{align}
then the tensor product $\mathcal{B}_{n}^{\left(  \rho_{0}\right)  ,1}%
\otimes\ldots\otimes\mathcal{B}_{n}^{\left(  \rho_{0}\right)  ,n}$ is a
$\mathbf{\rho}_{0}$-graded $n$-ary (almost medial) algebra.
\end{proposition}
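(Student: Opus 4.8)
The statement asserts that, given $n$ copies of $\bm{\rho}_0$-graded (almost medial) $n$-ary algebras $\mathcal{B}_n^{(\rho_0),i}$ sharing the same mediality factor $\bm{\rho}_0$ and the same $G$-graded structure, the tensor product $\mathcal{B}_n^{(\rho_0),1}\otimes\cdots\otimes\mathcal{B}_n^{(\rho_0),n}$ equipped with the $\bm{\rho}_0$-graded multiplication $\mu_n^{\star(\rho_0)}$ of \eqref{rm} is again a $\bm{\rho}_0$-graded (almost medial) $n$-ary algebra. The plan is to verify the two defining properties: first that $\mu_n^{\star(\rho_0)}$ is well-defined and respects the $G$-gradation on the tensor product (so the ``higher graded $n$-ary algebra'' condition holds), and second that it satisfies the $n$-ary mediality relation \eqref{rn2} with exactly the factor $\bm{\rho}_0$, i.e. that it is $0$-level almost medial. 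The whole argument is the $n$-ary, $n$-fold-tensor analog of the binary computation behind \eqref{em}--\eqref{rab}, but organized through the matrix-polyad notation of Definition~\ref{def-matr} so that ``transpose of the matrix polyad'' plays the role it plays in \eqref{mna}.

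\textbf{Step 1: gradation of the tensor product.} First I would fix the grading on $\mathcal{B}_n^{(\rho_0),1}\otimes\cdots\otimes\mathcal{B}_n^{(\rho_0),n}$ by declaring $(b^1\otimes\cdots\otimes b^n)'=b^{1\prime}+\cdots+b^{n\prime}\in G$ on homogeneous tensors; because all factors carry the same $G$-graded structure this is the standard total grading. Then I would check that the right-hand side of \eqref{rm}, namely $\mu_n^{\otimes}$ applied columnwise, lands in the homogeneous component of degree $\sum_{i,j} b_j^{i\prime}$ (the sum over the whole $n\times n$ matrix polyad of gradings), using that each $\mu_n^{(i)}$ respects the gradation on its own algebra. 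Since the scalar $\bm{\rho}_0$ of \eqref{rm} is a nonzero element of $\Bbbk$ and does not change degree, this forces $\mu_n^{\star(\rho_0)}$ to respect the gradation as well; hence the higher-graded-algebra axiom holds. This step is essentially bookkeeping.

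\textbf{Step 2: almost mediality with factor $\bm{\rho}_0$.} This is the substantive part. I would form a matrix polyad of $n^2$ elements of the tensor-product algebra, i.e. an $n\times n$ array whose $(j,k)$ entry is itself an $n$-fold tensor $b_j^{k,1}\otimes\cdots\otimes b_j^{k,n}$ — equivalently an $n\times n\times n$ array $\big(b_j^{k,i}\big)$. Using \eqref{rm} repeatedly to evaluate $(\mu_n^{\star(\rho_0)})^{\circ(n+1)}$ on this polyad, each tensor slot $i$ gets acted on by $(\mu_n^{(i)})^{\circ(n+1)}$ on the corresponding $n\times n$ matrix polyad of its own entries, at the cost of an accumulated product of $\bm{\rho}_0$-values indexed by the grading submatrices that were permuted. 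One then applies the $n$-ary mediality of each factor algebra, $(\mu_n^{(i)})^{\circ(n+1)}[\hat A]=(\mu_n^{(i)})^{\circ(n+1)}[\hat A^T]$, slot by slot, and reassembles via \eqref{rm} read backwards. The key identity to be used is the cocycle-type relation \eqref{rr}, $\bm{\rho}_0(\hat A')\,\bm{\rho}_0((\hat A')^T)=\mathtt{1}$, together with the normalization, to show that all the auxiliary $\bm{\rho}_0$-factors collected along the way collapse and leave precisely one factor $\bm{\rho}_0$ of the grading matrix polyad of the reshuffled arguments — matching \eqref{rn2}. Concretely, transposing the ``big'' $n^2$-polyad of tensor elements corresponds slotwise to transposing each factor's $n^2$-polyad, and the grading data transpose compatibly because all factors share the same graded structure; so the scalar that appears is $\bm{\rho}_0$ evaluated on the transposed grading matrix polyad, as required.

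\textbf{Main obstacle.} The hard part will be the combinatorial bookkeeping in Step~2: tracking which blocks of $\bm{\rho}_0$-arguments get permuted at each of the $n+1$ applications of $\mu_n^{\star(\rho_0)}$, and showing that the telescoping product of these factors — via the two-variable-block relations \eqref{rr} (the $n$-ary analog of \eqref{r01}) — reduces to exactly the single factor $\bm{\rho}_0$ of the transposed $n^2$-grading polyad, with no residual scalar. For $n=2$ this is already implicit in the passage \eqref{em}--\eqref{rab}; for general $n$ one should organize it as an induction on the number of adjacent ``column transpositions'' needed to turn the row-major reshape of the $n\times n$ polyad into its transpose (the polyadic medial twist map $\chi_{medial}^{(n^2)}$ of \eqref{an}), invoking \eqref{rm} and \eqref{rr} at each elementary step. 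Associativity of $\mu_n^{\star(\rho_0)}$ — needed so that $(\mu_n^{\star(\rho_0)})^{\circ(n+1)}$ is unambiguous — follows as in the binary Proposition after \eqref{em} from polyadic associativity of each $\mu_n^{(i)}$ together with the multiplicativity properties of $\bm{\rho}_0$ in \eqref{r02}--\eqref{r03}, and I would record this as a preliminary remark before carrying out the mediality computation.
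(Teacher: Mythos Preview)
The paper does not supply a proof of this proposition at all: immediately after the statement it passes to the symbolic rewriting (\ref{rb}) and an example, effectively \emph{identifying} the defining relation (\ref{rm}) with the almost-mediality condition (\ref{rn2}) for the tensor product (``we can write this in the form, similar to the almost mediality condition (\ref{rn2})''). In other words, in the paper the proposition functions as a definition-plus-observation rather than as a theorem with an argument behind it.

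Your proposal therefore goes well beyond what the paper does: you are trying to show that $\mu_n^{\star(\rho_0)}$ satisfies the \emph{genuine} almost-mediality condition (\ref{rn2}), i.e.\ that $(\mu_n^{\star(\rho_0)})^{\circ(n+1)}$ applied to an $n\times n$ polyad of tensor-product elements agrees, up to a single $\bm{\rho}_0$-factor, with the same product on the transposed polyad. That is a stronger and more honest reading of ``$\bm{\rho}_0$-graded $n$-ary (almost medial) algebra'' than the paper's, and your two-step outline (gradation, then mediality) is the right shape. One correction to Step~2: when you invoke mediality of each factor algebra you write ``$(\mu_n^{(i)})^{\circ(n+1)}[\hat A]=(\mu_n^{(i)})^{\circ(n+1)}[\hat A^T]$'', but the $\mathcal{B}_n^{(\rho_0),i}$ are only \emph{almost} medial, so each such step contributes an extra $\bm{\rho}_0$-factor evaluated on that slot's grading submatrix; these $n$ slotwise factors, together with the $n+1$ factors coming from the repeated use of (\ref{rm}), are what must telescope via (\ref{rr}) into the single $\bm{\rho}_0$ on the total grading polyad. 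That is exactly the bookkeeping you flag as the main obstacle, and it is genuinely the nontrivial content missing from the paper; organizing it as you suggest, via the elementary transpositions generating $\chi_{medial}^{(n^2)}$ of (\ref{an}), is a sound strategy, but be aware that (\ref{r02})--(\ref{r03}) are stated only for $n=2$ and you will need their $n$-ary analogues to make the telescoping rigorous.
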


Symbolically, we can write this in the form, similar to the almost mediality
condition (\ref{rn2})%
\begin{equation}
\mathbf{\rho}_{0}\left(  \mathbf{\hat{B}}_{\left(  n^{2}\right)  }^{\prime
}\right)  \mu_{n}^{\star\left(  \rho_{0}\right)  }\circ\mu_{n^{2}}^{\otimes
}\left[  \mathbf{\hat{B}}_{\left(  n^{2}\right)  }\right]  =\mu_{n}^{\otimes
}\circ\mu_{n}^{\left(  1\right)  }\circ\ldots\circ\mu_{n}^{\left(  n\right)
}\left[  \mathbf{\hat{B}}_{\left(  n^{2}\right)  }^{T}\right]  , \label{rb}%
\end{equation}
where%
\begin{equation}
\mathbf{\hat{B}}_{\left(  n^{2}\right)  }^{\prime}=\left(
\begin{array}
[c]{ccc}%
b_{1}^{1\prime} & \ldots & b_{1}^{n\prime}\\
\vdots & \ldots & \vdots\\
b_{n}^{1\prime} & \ldots & b_{n}^{n\prime}%
\end{array}
\right)  ,\ \ \ \mathbf{\hat{B}}_{\left(  n^{2}\right)  }=\left(
\begin{array}
[c]{ccc}%
b_{1}^{1} & \ldots & b_{1}^{n}\\
\vdots & \ldots & \vdots\\
b_{n}^{1} & \ldots & b_{n}^{n}%
\end{array}
\right)
\end{equation}
and $\mathbf{\hat{B}}_{\left(  n^{2}\right)  }^{T}$ is its transpose.

\begin{example}
In the lowest non-binary example, for 3 ternary $\mathbf{\rho}_{0}$-graded
algebras $\mathcal{A}_{3}^{\left(  \rho_{0}\right)  }=\left\langle A\mid
\mu_{3}^{\left(  a\right)  }\right\rangle $, $\mathcal{B}_{3}^{\left(
\rho_{0}\right)  }=\left\langle B\mid\mu_{3}^{\left(  b\right)  }\right\rangle
$, $\mathcal{C}_{3}^{\left(  \rho_{0}\right)  }=\left\langle C\mid\mu
_{3}^{\left(  c\right)  }\right\rangle $, from (\ref{rm}) we have the ternary
multiplication $\mu_{3}^{\star\left(  \rho_{0}\right)  }$ for their ternary
tensor product $\mathcal{A}_{3}^{\left(  \rho_{0}\right)  }\otimes
\mathcal{B}_{3}^{\left(  \rho_{0}\right)  }\otimes\mathcal{C}_{3}^{\left(
\rho_{0}\right)  }$ given by%
\begin{equation}
\mathbf{\rho}_{0}\left(
\begin{array}
[c]{ccc}%
a_{1}^{\prime} & b_{1}^{\prime} & c_{1}^{\prime}\\
a_{2}^{\prime} & b_{2}^{\prime} & c_{2}^{\prime}\\
a_{3}^{\prime} & b_{3}^{\prime} & c_{3}^{\prime}%
\end{array}
\right)  \mu_{3}^{\star\left(  \rho_{0}\right)  }\left(
\begin{array}
[c]{c}%
\left(  a_{1}\otimes b_{1}\otimes c_{1}\right)  \\
\left(  a_{2}\otimes b_{2}\otimes c_{2}\right)  \\
\left(  a_{3}\otimes b_{2}\otimes c_{3}\right)
\end{array}
\right)  =\left(  \mu_{3}^{\left(  a\right)  }\left[  a_{1},a_{2}%
,a_{3}\right]  \otimes\mu_{3}^{\left(  a\right)  }\left[  b_{1},b_{2}%
,b_{3}\right]  \otimes\mu_{3}^{\left(  c\right)  }\left[  c_{1},c_{2}%
,c_{3}\right]  \right)  ,
\end{equation}
where $a_{i}\in A,\ \ b_{i}\in B,\ \ c_{i}\in C,\ \ a_{i}^{\prime}%
,b_{i}^{\prime},c_{i}^{\prime}\in G$, $i=1,2,3$.
\end{example}

\subsection{Higher level mediality $n^{2}$-ary brackets}

Binary almost mediality algebras for $n=2$ were considered in (\ref{r2}),
together with the tower of mediality factors (\ref{M0}), (\ref{m1}%
)--(\ref{mk}). Here we generalize this construction to any arity $n$ which can
be done using the matrix polyad construction.

First, we deform the almost mediality condition (\ref{rn2})%
\begin{equation}
\mathbf{\rho}_{0}^{\left(  n^{2}\right)  }\left(  \mathbf{\hat{A}}_{\left(
n^{2}\right)  }^{\prime}\right)  \mathrm{A}_{n^{2}}^{\left(  \mu\right)
}=\mathrm{A}_{n^{2}}^{T\left(  \mu\right)  }+\mathrm{M}_{0}^{\left(  \rho
_{0}\right)  }\left(  \mathbf{\hat{A}}_{\left(  n^{2}\right)  }\right)
,\ \ \ a_{ij}\in A;\ \ a_{ij}^{\prime}\in G,\ \ i,j=1,\ldots,n, \label{ra}%
\end{equation}
where $\mathrm{M}_{0}^{\left(  \rho_{0}\right)  }:A^{\otimes n^{2}}\rightarrow
A$ is the \textit{higher mediality }$n^{2}$\textit{-ary} \textit{bracket of
}$0$\textit{-level}. Consider $\mathrm{M}_{0}^{\left(  \rho_{0}\right)  }$ as
a new $n^{2}$-ary (bracket) multiplication%
\begin{equation}
\mathbf{\mu}_{n^{2}}^{\left(  \rho_{0},M_{0}\right)  }\left[  \mathbf{\hat{A}%
}_{\left(  n^{2}\right)  }\right]  :=\mathrm{M}_{0}^{\left(  \rho_{0}\right)
}\left(  \mathbf{\hat{A}}_{\left(  n^{2}\right)  }\right)  . \label{mm0}%
\end{equation}

\begin{definition}
A $n^{2}$-ary algebra%
\begin{equation}
\mathcal{A}_{n^{2}}^{\left(  \rho_{0},M_{0}\right)  }=\left\langle
A\mid\mathbf{\mu}_{n^{2}}^{\left(  \rho_{0},M_{0}\right)  }\right\rangle
\end{equation}
is called a $0$\textit{-level mediality bracket }$n^{2}$\textit{-ary algebra}.
\end{definition}

\begin{proposition}
The $n^{2}$-ary algebra $\mathcal{A}_{n^{2}}^{\left(  \rho_{0},M_{0}\right)
}$is almost medial with the mediality factor $\left(  -\left(  \mathbf{\rho
}_{0}^{\left(  n^{2}\right)  }\right)  ^{-1}\right)  $.
\end{proposition}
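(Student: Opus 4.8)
The plan is to carry over, at the level of matrix polyads, the argument already used for the $4$-ary bracket algebra $\mathcal{A}_{4}^{\left(\rho_{0},M_{0}\right)}$, with the swap of the two middle arguments now replaced by the polyadic medial twist map $\chi_{medial}^{\left(n^{2}\right)}$ of (\ref{an}), i.e.\ by transposition $\mathbf{\hat{A}}_{\left(n^{2}\right)}\mapsto\mathbf{\hat{A}}_{\left(n^{2}\right)}^{T}$ of the matrix polyad of arguments.

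First I would take the defining deformation (\ref{ra}) of the $0$-level bracket,
\[
\mathbf{\rho}_{0}^{\left(n^{2}\right)}\!\left(\mathbf{\hat{A}}_{\left(n^{2}\right)}^{\prime}\right)\mathrm{A}_{n^{2}}^{\left(\mu\right)}=\mathrm{A}_{n^{2}}^{T\left(\mu\right)}+\mathrm{M}_{0}^{\left(\rho_{0}\right)}\!\left(\mathbf{\hat{A}}_{\left(n^{2}\right)}\right),
\]
and apply $\chi_{medial}^{\left(n^{2}\right)}$ to it, i.e.\ substitute $\mathbf{\hat{A}}_{\left(n^{2}\right)}\mapsto\mathbf{\hat{A}}_{\left(n^{2}\right)}^{T}$ throughout; since $\bigl(\mathbf{\hat{B}}^{T}\bigr)^{T}=\mathbf{\hat{B}}$ for any matrix polyad, this interchanges $\mathrm{A}_{n^{2}}^{\left(\mu\right)}\leftrightarrow\mathrm{A}_{n^{2}}^{T\left(\mu\right)}$ and sends $\mathbf{\hat{A}}_{\left(n^{2}\right)}^{\prime}\mapsto\bigl(\mathbf{\hat{A}}_{\left(n^{2}\right)}^{\prime}\bigr)^{T}$, yielding the transposed relation
\[
\mathbf{\rho}_{0}^{\left(n^{2}\right)}\!\left(\bigl(\mathbf{\hat{A}}_{\left(n^{2}\right)}^{\prime}\bigr)^{T}\right)\mathrm{A}_{n^{2}}^{T\left(\mu\right)}=\mathrm{A}_{n^{2}}^{\left(\mu\right)}+\mathrm{M}_{0}^{\left(\rho_{0}\right)}\!\left(\mathbf{\hat{A}}_{\left(n^{2}\right)}^{T}\right).
\]
Next I would multiply (\ref{ra}) through by $\mathbf{\rho}_{0}^{\left(n^{2}\right)}\bigl(\bigl(\mathbf{\hat{A}}_{\left(n^{2}\right)}^{\prime}\bigr)^{T}\bigr)$; by the relation (\ref{rr}) the coefficient of $\mathrm{A}_{n^{2}}^{\left(\mu\right)}$ on the left collapses to $\mathtt{1}$, while the term $\mathbf{\rho}_{0}^{\left(n^{2}\right)}\bigl(\bigl(\mathbf{\hat{A}}_{\left(n^{2}\right)}^{\prime}\bigr)^{T}\bigr)\mathrm{A}_{n^{2}}^{T\left(\mu\right)}$ produced on the right is eliminated by the transposed relation just obtained. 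The $\mathrm{A}_{n^{2}}^{\left(\mu\right)}$-terms cancel and one is left with $\mathrm{M}_{0}^{\left(\rho_{0}\right)}\bigl(\mathbf{\hat{A}}_{\left(n^{2}\right)}^{T}\bigr)+\mathbf{\rho}_{0}^{\left(n^{2}\right)}\bigl(\bigl(\mathbf{\hat{A}}_{\left(n^{2}\right)}^{\prime}\bigr)^{T}\bigr)\mathrm{M}_{0}^{\left(\rho_{0}\right)}\bigl(\mathbf{\hat{A}}_{\left(n^{2}\right)}\bigr)=0$. Rewriting $\mathbf{\rho}_{0}^{\left(n^{2}\right)}\bigl(\bigl(\mathbf{\hat{A}}_{\left(n^{2}\right)}^{\prime}\bigr)^{T}\bigr)=\bigl(\mathbf{\rho}_{0}^{\left(n^{2}\right)}\bigl(\mathbf{\hat{A}}_{\left(n^{2}\right)}^{\prime}\bigr)\bigr)^{-1}$ by (\ref{rr}) once more, and recalling from (\ref{mm0}) that $\mathrm{M}_{0}^{\left(\rho_{0}\right)}$ is the multiplication $\mathbf{\mu}_{n^{2}}^{\left(\rho_{0},M_{0}\right)}$, this is exactly the $0$-level almost mediality relation (\ref{rn2}) for $\mathcal{A}_{n^{2}}^{\left(\rho_{0},M_{0}\right)}$ with mediality factor $\bigl(-\bigl(\mathbf{\rho}_{0}^{\left(n^{2}\right)}\bigr)^{-1}\bigr)$; the required normalization and the (\ref{rr})-type constraint for this new factor follow formally from those of $\mathbf{\rho}_{0}^{\left(n^{2}\right)}$.

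The calculation is essentially bookkeeping of which matrix polyad (or its transpose, in elements or in gradings) sits in each slot; the one point deserving care is that $\chi_{medial}^{\left(n^{2}\right)}$ is an involution on matrix polyads that is compatible with passing from elements to their gradings, so that applying it to (\ref{ra}) is legitimate and produces precisely the transposed identity, and that $\mathrm{A}_{n^{2}}^{\left(\mu\right)}$ and $\mathrm{A}_{n^{2}}^{T\left(\mu\right)}$ are unambiguously defined thanks to total associativity (\ref{mass}). I do not expect any genuine obstacle here: this is the verbatim $n^{2}$-ary analog of the $n=2$ proof given above.
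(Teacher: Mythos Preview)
Your argument is correct and coincides with the paper's proof: both multiply the defining relation (\ref{ra}) by $\mathbf{\rho}_{0}^{\left(n^{2}\right)}\bigl(\bigl(\mathbf{\hat{A}}_{\left(n^{2}\right)}^{\prime}\bigr)^{T}\bigr)$, use (\ref{rr}) to simplify, and identify the resulting right-hand side with $-\mathrm{M}_{0}^{\left(\rho_{0}\right)}\bigl(\mathbf{\hat{A}}_{\left(n^{2}\right)}^{T}\bigr)$. The only cosmetic difference is that you write out the transposed instance of (\ref{ra}) explicitly before substituting, whereas the paper simply observes that the expression $\mathrm{A}_{n^{2}}^{\left(\mu\right)}-\mathbf{\rho}_{0}^{\left(n^{2}\right)}\bigl(\bigl(\mathbf{\hat{A}}_{\left(n^{2}\right)}^{\prime}\bigr)^{T}\bigr)\mathrm{A}_{n^{2}}^{T\left(\mu\right)}$ is, by definition, minus the bracket evaluated at the transposed polyad.
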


\begin{proof}
We multiply the definition (\ref{ra}) by $\mathbf{\rho}_{0}^{\left(
n^{2}\right)  }\left(  \left(  \mathbf{\hat{A}}_{\left(  n^{2}\right)
}^{\prime}\right)  ^{T}\right)  $ and use (\ref{rr}) to obtain%
\begin{equation}
\mathbf{\rho}_{0}^{\left(  n^{2}\right)  }\left(  \left(  \mathbf{\hat{A}%
}_{\left(  n^{2}\right)  }^{\prime}\right)  ^{T}\right)  \mathrm{M}%
_{0}^{\left(  \rho_{0}\right)  }\left(  \mathbf{\hat{A}}_{\left(
n^{2}\right)  }\right)  =\mathrm{A}_{n^{2}}^{\left(  \mu\right)
}-\mathbf{\rho}_{0}^{\left(  n^{2}\right)  }\left(  \left(  \mathbf{\hat{A}%
}_{\left(  n^{2}\right)  }^{\prime}\right)  ^{T}\right)  \mathrm{A}_{n^{2}%
}^{T\left(  \mu\right)  }.
\end{equation}
Taking into account that the r.h.s. here is exactly $-\mathrm{M}_{0}^{\left(
\rho_{0}\right)  }\left(  \mathbf{\hat{A}}_{\left(  n^{2}\right)  }%
^{T}\right)  $, we have%
\begin{equation}
-\mathbf{\rho}_{0}^{\left(  n^{2}\right)  }\left(  \left(  \mathbf{\hat{A}%
}_{\left(  n^{2}\right)  }^{\prime}\right)  ^{T}\right)  \mathrm{M}%
_{0}^{\left(  \rho_{0}\right)  }\left(  \mathbf{\hat{A}}_{\left(
n^{2}\right)  }\right)  =\mathrm{M}_{0}^{\left(  \rho_{0}\right)  }\left(
\mathbf{\hat{A}}_{\left(  n^{2}\right)  }^{T}\right)  ,
\end{equation}
and using (\ref{rr}) again, we get%
\begin{equation}
-\mathbf{\rho}_{0}^{\left(  n^{2}\right)  }\left(  \mathbf{\hat{A}}_{\left(
n^{2}\right)  }^{\prime}\right)  ^{-1}\mathrm{M}_{0}^{\left(  \rho_{0}\right)
}\left(  \mathbf{\hat{A}}_{\left(  n^{2}\right)  }\right)  =\mathrm{M}%
_{0}^{\left(  \rho_{0}\right)  }\left(  \mathbf{\hat{A}}_{\left(
n^{2}\right)  }^{T}\right)  ,
\end{equation}
which should be compared with (\ref{rn2}).
\end{proof}

Now we \textquotedblleft deform\textquotedblright\ (\ref{ra}) successively by
defining further $n^{2}$-ary brackets\ $\mathrm{M}_{k}$ and higher level
mediality factors $\mathbf{\rho}_{k}^{\left(  n^{2}\right)  }:G^{\times n^{2}%
}\rightarrow\Bbbk$ as follows.

\begin{definition}
The $k$\textit{-level mediality }$n^{2}$-ary \textit{brackets and factors are
defined by}%
\begin{align}
\mathbf{\rho}_{1}^{\left(  n^{2}\right)  }\left(  \mathbf{\hat{A}}_{\left(
n^{2}\right)  }^{\prime}\right)  \mathrm{M}_{0}^{\left(  \rho_{0}\right)
}\left(  \mathbf{\hat{A}}_{\left(  n^{2}\right)  }\right)   &  =\mathrm{M}%
_{0}^{\left(  \rho_{0}\right)  }\left(  \mathbf{\hat{A}}_{\left(
n^{2}\right)  }^{T}\right)  +\mathrm{M}_{1}^{\left(  \rho_{0},\rho_{1}\right)
}\left(  \mathbf{\hat{A}}_{\left(  n^{2}\right)  }\right)  ,\label{mm1}\\
\mathbf{\rho}_{2}^{\left(  n^{2}\right)  }\left(  \mathbf{\hat{A}}_{\left(
n^{2}\right)  }^{\prime}\right)  \mathrm{M}_{1}^{\left(  \rho_{0},\rho
_{1}\right)  }\left(  \mathbf{\hat{A}}_{\left(  n^{2}\right)  }\right)   &
=\mathrm{M}_{1}^{\left(  \rho_{0},\rho_{1}\right)  }\left(  \mathbf{\hat{A}%
}_{\left(  n^{2}\right)  }^{T}\right)  +\mathrm{M}_{2}^{\left(  \rho_{0}%
,\rho_{1},\rho_{2}\right)  }\left(  \mathbf{\hat{A}}_{\left(  n^{2}\right)
}\right)  ,\\
&  \vdots\nonumber\\
\mathbf{\rho}_{k}^{\left(  n^{2}\right)  }\left(  \mathbf{\hat{A}}_{\left(
n^{2}\right)  }^{\prime}\right)  \mathrm{M}_{k-1}^{\left(  \rho_{0},\rho
_{1},\ldots,\rho_{k-1}\right)  }\left(  \mathbf{\hat{A}}_{\left(
n^{2}\right)  }\right)   &  =\mathrm{M}_{k-1}^{\left(  \rho_{0},\rho
_{1},\ldots,\rho_{k-1}\right)  }\left(  \mathbf{\hat{A}}_{\left(
n^{2}\right)  }^{T}\right)  +\mathrm{M}_{k}^{\left(  \rho_{0},\rho_{1}%
,\ldots,\rho_{k}\right)  }\left(  \mathbf{\hat{A}}_{\left(  n^{2}\right)
}\right) \label{mmk}\\
\forall a_{ij}  &  \in A;\ \ a_{ij}^{\prime}\in G,\ \ i,j=1,\ldots,n.\nonumber
\end{align}

\end{definition}

\begin{definition}
$k$\textit{-level }$n^{2}$-ary \textit{almost mediality} is given by the
vanishing of the last \textquotedblleft deforming\textquotedblright\ medial
$n^{2}$-ary bracket%
\begin{equation}
\mathrm{M}_{k}^{\left(  \rho_{0},\rho_{1},\ldots,\rho_{k}\right)  }\left(
\mathbf{\hat{A}}_{\left(  n^{2}\right)  }\right)  =0,\ \ \ \forall a_{ij}\in
A,
\end{equation}
and has the form%
\begin{equation}
\mathbf{\rho}_{k}^{\left(  n^{2}\right)  }\left(  \mathbf{\hat{A}}_{\left(
n^{2}\right)  }^{\prime}\right)  \mathrm{M}_{k-1}^{\left(  \rho_{0},\rho
_{1},\ldots,\rho_{k-1}\right)  }\left(  \mathbf{\hat{A}}_{\left(
n^{2}\right)  }\right)  =\mathrm{M}_{k-1}^{\left(  \rho_{0},\rho_{1}%
,\ldots,\rho_{k-1}\right)  }\left(  \mathbf{\hat{A}}_{\left(  n^{2}\right)
}^{T}\right)  .
\end{equation}

\end{definition}

\begin{proposition}
The higher level \textquotedblleft deforming\textquotedblright%
\ functions\textit{ (}$n^{2}$-ary brackets) $\mathrm{M}_{i}^{\left(  \rho
_{0},\rho_{1},\ldots,\rho_{k}\right)  }\left(  \mathbf{\hat{A}}_{\left(
n^{2}\right)  }\right)  $, $i=1,\ldots,k$ can be expressed through
$\mathrm{M}_{0}^{\left(  \rho_{0}\right)  }\left(  \mathbf{\hat{A}}_{\left(
n^{2}\right)  }\right)  $ from (\ref{ra}) using a combination of the lower
level $n^{2}$-ary mediality factors $\mathbf{\rho}_{k}^{\left(  n^{2}\right)
}\left(  \mathbf{\hat{A}}_{\left(  n^{2}\right)  }^{\prime}\right)  $,
$i=1,\ldots,k$.
\end{proposition}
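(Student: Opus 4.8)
Throughout write $\mathbf{\hat{A}}$ for the matrix polyad $\mathbf{\hat{A}}_{\left(  n^{2}\right)  }$ and $\mathbf{\hat{A}}^{\prime}$ for its grading matrix, so that $\mathbf{\hat{A}}^{T}$ carries the grading matrix $\left(  \mathbf{\hat{A}}^{\prime}\right)  ^{T}$; also abbreviate $\mathrm{M}_{j}^{\left(  \rho_{0},\ldots,\rho_{j}\right)  }$ by $\mathrm{M}_{j}$ and $\mathbf{\rho}_{j}^{\left(  n^{2}\right)  }$ by $\mathbf{\rho}_{j}$. The plan is to induct on the level index $i$ using the defining recursion (\ref{mm1})--(\ref{mmk}) solved for its top term, $\mathrm{M}_{j}\left(  \mathbf{\hat{A}}\right)  =\mathbf{\rho}_{j}\left(  \mathbf{\hat{A}}^{\prime}\right)  \mathrm{M}_{j-1}\left(  \mathbf{\hat{A}}\right)  -\mathrm{M}_{j-1}\left(  \mathbf{\hat{A}}^{T}\right)  $, so that $\mathrm{M}_{j}$ is pinned down as soon as $\mathrm{M}_{j-1}$ is known on both $\mathbf{\hat{A}}$ and its transpose $\mathbf{\hat{A}}^{T}$. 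The single extra ingredient that eliminates the transposed term is the identity $\mathrm{M}_{0}\left(  \mathbf{\hat{A}}^{T}\right)  =-\left(  \mathbf{\rho}_{0}\left(  \mathbf{\hat{A}}^{\prime}\right)  \right)  ^{-1}\mathrm{M}_{0}\left(  \mathbf{\hat{A}}\right)  $ that was derived in the proof of the Proposition stating that $\mathcal{A}_{n^{2}}^{\left(  \rho_{0},M_{0}\right)  }$ is almost medial, together with the normalization (\ref{rr}), which gives $\mathbf{\rho}_{0}\left(  \left(  \mathbf{\hat{A}}^{\prime}\right)  ^{T}\right)  =\left(  \mathbf{\rho}_{0}\left(  \mathbf{\hat{A}}^{\prime}\right)  \right)  ^{-1}$.

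For the base case $i=1$, substituting this identity into (\ref{mm1}) yields $\mathrm{M}_{1}\left(  \mathbf{\hat{A}}\right)  =\left[  \mathbf{\rho}_{1}\left(  \mathbf{\hat{A}}^{\prime}\right)  +\left(  \mathbf{\rho}_{0}\left(  \mathbf{\hat{A}}^{\prime}\right)  \right)  ^{-1}\right]  \mathrm{M}_{0}\left(  \mathbf{\hat{A}}\right)  $, the $n$-ary counterpart of the displayed binary formula for $L_{1}^{\left(  \varepsilon_{0},\varepsilon_{1}\right)  }$. For the inductive step I would run a joint induction on the pair $\left(  \mathrm{M}_{j}\left(  \mathbf{\hat{A}}\right)  ,\mathrm{M}_{j}\left(  \mathbf{\hat{A}}^{T}\right)  \right)  $: substituting $\mathbf{\hat{A}}\mapsto\mathbf{\hat{A}}^{T}$ in (\ref{mmk}) and using $\left(  \mathbf{\hat{A}}^{T}\right)  ^{T}=\mathbf{\hat{A}}$ gives the companion recursion $\mathrm{M}_{j}\left(  \mathbf{\hat{A}}^{T}\right)  =\mathbf{\rho}_{j}\left(  \left(  \mathbf{\hat{A}}^{\prime}\right)  ^{T}\right)  \mathrm{M}_{j-1}\left(  \mathbf{\hat{A}}^{T}\right)  -\mathrm{M}_{j-1}\left(  \mathbf{\hat{A}}\right)  $; feeding into both recursions the inductive hypothesis that $\mathrm{M}_{j-1}\left(  \mathbf{\hat{A}}\right)  $ and $\mathrm{M}_{j-1}\left(  \mathbf{\hat{A}}^{T}\right)  $ are each a scalar multiple of $\mathrm{M}_{0}\left(  \mathbf{\hat{A}}\right)  $, with coefficient a Laurent polynomial in the values $\mathbf{\rho}_{0}\left(  \mathbf{\hat{A}}^{\prime}\right)  ,\ldots,\mathbf{\rho}_{j-1}\left(  \mathbf{\hat{A}}^{\prime}\right)  $ and $\mathbf{\rho}_{0}\left(  \left(  \mathbf{\hat{A}}^{\prime}\right)  ^{T}\right)  ,\ldots,\mathbf{\rho}_{j-1}\left(  \left(  \mathbf{\hat{A}}^{\prime}\right)  ^{T}\right)  $, closes the induction and produces the asserted closed form for $\mathrm{M}_{i}$, $i=1,\ldots,k$; by (\ref{rr}) every occurrence of $\mathbf{\rho}_{0}$ on the transposed grading matrix is rewritten as $\left(  \mathbf{\rho}_{0}\left(  \mathbf{\hat{A}}^{\prime}\right)  \right)  ^{-1}$, exactly as $\varepsilon_{0}\left(  b^{\prime},a^{\prime}\right)  $ enters the binary expressions for $L_{1}$ and $L_{2}$.

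The step that requires care — the main obstacle — is the bookkeeping of the medial twist $\chi_{medial}^{\left(  n^{2}\right)  }:\mathbf{\hat{A}}\mapsto\mathbf{\hat{A}}^{T}$. The equations (\ref{mm1})--(\ref{mmk}) are not invariant under $\chi_{medial}^{\left(  n^{2}\right)  }$; each transposes into the companion recursion above, so one genuinely has to propagate the two families $\mathrm{M}_{j}\left(  \mathbf{\hat{A}}\right)  $ and $\mathrm{M}_{j}\left(  \mathbf{\hat{A}}^{T}\right)  $ simultaneously rather than a single sequence, and it is precisely the reduction $\mathrm{M}_{0}\left(  \mathbf{\hat{A}}^{T}\right)  \mapsto-\left(  \mathbf{\rho}_{0}\left(  \mathbf{\hat{A}}^{\prime}\right)  \right)  ^{-1}\mathrm{M}_{0}\left(  \mathbf{\hat{A}}\right)  $ (valid since $\mathcal{A}_{n^{2}}^{\left(  \rho_{0},M_{0}\right)  }$ is almost medial) that collapses everything to a multiple of the single bracket $\mathrm{M}_{0}\left(  \mathbf{\hat{A}}\right)  $. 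Once this is organised, the recursion is an elementary two-term linear recursion over $\Bbbk$ with the mediality factors evaluated on $\mathbf{\hat{A}}^{\prime}$ and $\left(  \mathbf{\hat{A}}^{\prime}\right)  ^{T}$ as coefficients, and all that remains is the routine computation of the first few $\mathrm{M}_{j}$, in complete parallel with the binary tower.
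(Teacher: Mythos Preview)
Your approach is correct and is essentially the same as the paper's, which simply states that the result ``follows from the equations (\ref{mm1})--(\ref{mmk}).'' You have, in fact, supplied considerably more than the paper does: you make explicit the inductive structure, identify the key reduction $\mathrm{M}_{0}(\mathbf{\hat{A}}^{T})=-\mathbf{\rho}_{0}(\mathbf{\hat{A}}^{\prime})^{-1}\mathrm{M}_{0}(\mathbf{\hat{A}})$ coming from the preceding almost-mediality proposition, and correctly note that one must propagate the pair $\bigl(\mathrm{M}_{j}(\mathbf{\hat{A}}),\mathrm{M}_{j}(\mathbf{\hat{A}}^{T})\bigr)$ through the recursion.
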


\begin{proof}
This follows from the equations (\ref{mm1})--(\ref{mmk}).
\end{proof}

\section{\textsc{Toyoda's theorem for almost medial algebras}}

The structure of the almost medial graded algebras (binary and $n$-ary) can be
established by searching for possible analogs of Toyoda's theorem (\ref{ma})
(see, \cite{bru44,mur41,toy41}) which is the main statement for medial
groupoids \cite{jez/kep} and quasigroups \cite{shcherbacov}. As Toyoda's
theorem connects medial algebras with abelian algebras, we can foresee that in
the same way the almost medial algebras can be connected with almost
commutative algebras.

First, let us consider almost medial graded binary algebras, as defined in
(\ref{r2})--(\ref{r03}).

\begin{theorem}
Let $\mathcal{A}_{2}^{\left(  \rho\right)  }=\left\langle A\mid\mu
_{2}\right\rangle $ be an almost medial ($\mathbf{\rho}$-commutative)
$G$-graded binary algebra, then there exists an almost commutative
($\varepsilon$-commutative $G$-graded binary algebra $\mathcal{\bar{A}}%
_{2}^{\left(  \varepsilon\right)  }=\left\langle A\mid\bar{\mu}_{2}%
\right\rangle $, two grading preserving automorphisms $\varphi_{1,2}$ and a
fixed element $h\in A$, such that (cf. (\ref{ma}))\footnote{We use the
multiplicative notation for the algebra $\mathcal{\bar{A}}_{2}^{\left(
\varepsilon\right)  }$, because it is non-commutative.}%
\begin{align}
\mu_{2}\left[  a,b\right]   &  =\bar{\mu}_{2}\left[  \bar{\mu}_{2}\left[
\varphi_{1}(a),\varphi_{2}\left(  b\right)  \right]  ,h\right]  \ \ \text{or
}\ a\cdot b=\varphi_{1}(a)\varphi_{2}\left(  b\right)  h,\label{mab}\\
\mathbf{\rho}\left(  a^{\prime},b^{\prime},c^{\prime},d^{\prime}\right)   &
=\varepsilon\left(  b^{\prime},c^{\prime}\right)  ,\forall a,b,c,d\in
A,\ \ \ a^{\prime},b^{\prime},c^{\prime},d^{\prime}\in G, \label{re}%
\end{align}
where we denote $\mu_{2}\equiv\left(  \cdot\right)  $ and $\bar{\mu}%
_{2}\left[  a,b\right]  \equiv ab$.
\end{theorem}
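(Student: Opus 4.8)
The plan is to transpose the classical (Bruck--Murdoch--)Toyoda construction \cite{bru44,mur41,toy41} --- which recovers the functional form (\ref{ma}) for a medial quasigroup from its translation maps --- into the $\varepsilon$-twisted graded setting, exploiting the fact that every auxiliary object can be manufactured from a single homogeneous element of degree $0^{\prime}$, so that the gradation is carried along for free. Concretely, first I would fix a homogeneous $u\in A$ with $u^{\prime}=0^{\prime}$ such that the left and right translations $L_{u}=\mu_{2}[u,-]$ and $R_{u}=\mu_{2}[-,u]$ are invertible $\Bbbk$-linear operators on $A$ (for a unital $\mathcal{A}_{2}^{(\rho)}$ one takes $u=e$; in the cancellative case one passes, as in \cite{sho49,jez/kep93}, to the associated medial quasigroup, where such an element exists, runs the argument there, and restricts). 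Since $u^{\prime}=0^{\prime}$ and $\mu_{2}$ respects the higher gradation, $L_{u}$, $R_{u}$ and their inverses are all grading preserving, which is exactly what makes every object produced below homogeneous of the correct degree.

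Next I would introduce the straightened multiplication by the Toyoda recipe, $\bar{\mu}_{2}[a,b]:=\mu_{2}\!\left[R_{u}^{-1}(a),L_{u}^{-1}(b)\right]$, and read off the remaining data as $h:=\mu_{2}[u,u]$ together with $\varphi_{1},\varphi_{2}$ built from $R_{u},L_{u}$, normalised so that the $\varphi_{i}$ fix the $\bar{\mu}_{2}$-identity and (\ref{mab}) holds on the nose. One then checks, purely from associativity of $\mu_{2}$: (i) $\bar{\mu}_{2}$ is associative and $\varphi_{1},\varphi_{2}$ are grading-preserving automorphisms of $\mathcal{\bar{A}}_{2}^{(\varepsilon)}$ (which moreover commute, as in the classical case); and (ii) the identity $a\cdot b=\bar{\mu}_{2}[\bar{\mu}_{2}[\varphi_{1}(a),\varphi_{2}(b)],h]$, the direct analogue of the last step of the classical proof --- collapse everything by ``omitting brackets'', then reinsert the translations.

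The substantive step is establishing the $\varepsilon$-commutativity of $\bar{\mu}_{2}$ and the identification (\ref{re}). Here one feeds the almost-mediality relation (\ref{r2}), $\rho(a^{\prime},b^{\prime},c^{\prime},d^{\prime})\,abcd=acbd$, into the definition of $\bar{\mu}_{2}$; cancelling the outer translations (legitimate precisely because $R_{u},L_{u}$ are invertible) exhibits $\rho(a^{\prime},b^{\prime},c^{\prime},d^{\prime})$ as the scalar relating $\bar{\mu}_{2}[b,c]$ to $\bar{\mu}_{2}[c,b]$, hence forces $\rho$ to be independent of the outer degrees $a^{\prime},d^{\prime}$. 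Setting $\varepsilon(b^{\prime},c^{\prime}):=\rho(0^{\prime},b^{\prime},c^{\prime},0^{\prime})$ then gives (\ref{re}), and the relations (\ref{e01})--(\ref{e03}) for $\varepsilon$ are exactly the specialisations of (\ref{r01})--(\ref{r03}) for $\rho$, so $\mathcal{\bar{A}}_{2}^{(\varepsilon)}$ is a genuine $\varepsilon$-commutative $G$-graded algebra.

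The main obstacle I expect is the very first step: securing an element $u$ of degree $0^{\prime}$ whose translations are invertible. Without it there is no canonical way to ``divide out'' translations, the quasigroup machinery does not apply, and (\ref{mab}) can genuinely fail, so the statement should be read with that regularity hypothesis (unital, or cancellative via the embedding into a medial quasigroup \cite{sho49,jez/kep93,shcherbacov}), consistent with the earlier \emph{Assertion} and \emph{Corollary}. Once $u$ is in hand, the remaining work --- associativity and the automorphism property of $\bar{\mu}_{2}$, and all the grading bookkeeping --- is routine, being the $\varepsilon$-twisted counterpart of the computations in \cite{jez/kep,toy41}.
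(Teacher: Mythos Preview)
Your approach is correct (under the regularity hypothesis you flag) but genuinely different from the paper's. The paper does not construct $\bar{\mu}_{2}$, $\varphi_{1}$, $\varphi_{2}$, $h$ at all: it simply \emph{assumes} the Toyoda-type presentation (\ref{mab}) as an ansatz, substitutes it into the almost-mediality relation (\ref{r2}), expands both sides, cancels (using cancellativity of $\mathcal{\bar{A}}_{2}^{(\varepsilon)}$), and then reads off (\ref{re}) after applying $\varepsilon$-commutativity and commutativity of the automorphisms. In other words, the paper's argument is a consistency check --- it shows that \emph{if} such a presentation exists, then $\rho$ and $\varepsilon$ are tied by (\ref{re}) --- whereas you actually build the data from $\mu_{2}$ via the translation maps $L_{u},R_{u}$, following the classical Toyoda construction. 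Your route buys a genuine existence proof and makes the hidden regularity assumption explicit; the paper's route is shorter and isolates the algebraic identity (\ref{rf}) that drives (\ref{re}), but leaves the construction of $\bar{\mu}_{2}$ and the $\varphi_{i}$ to the reader (or to the classical literature).
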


\begin{proof}
We use the \textquotedblleft linear\textquotedblright\ presentation
(\ref{mab}) for the product in $\mathcal{A}_{2}^{\left(  \rho\right)  }$ and
insert it into the condition of almost mediality (\ref{r2}) to obtain%
\begin{align}
\mathbf{\rho}\left(  a^{\prime},b^{\prime},c^{\prime},d^{\prime}\right)   &
\left(  a\cdot b\right)  \cdot\left(  c\cdot d\right)  =\left(  a\cdot
c\right)  \cdot\left(  b\cdot d\right)  \Rightarrow\nonumber\\
\mathbf{\rho}\left(  a^{\prime},b^{\prime},c^{\prime},d^{\prime}\right)   &
\varphi_{1}\left(  \varphi_{1}(a)\varphi_{2}\left(  b\right)  h\right)
\varphi_{2}\left(  \left(  \varphi_{1}(c)\varphi_{2}\left(  d\right)
h\right)  \right)  h\nonumber\\
&  =\varphi_{1}\left(  \varphi_{1}(a)\varphi_{2}\left(  c\right)  h\right)
\varphi_{2}\left(  \left(  \varphi_{1}(b)\varphi_{2}\left(  d\right)
h\right)  \right)  \Rightarrow\nonumber\\
\mathbf{\rho}\left(  a^{\prime},b^{\prime},c^{\prime},d^{\prime}\right)   &
\varphi_{1}\circ\varphi_{1}(a)\varphi_{1}\circ\varphi_{2}\left(  b\right)
\varphi_{1}\left(  h\right)  \varphi_{2}\circ\varphi_{1}(c)\varphi_{2}%
\circ\varphi_{2}\left(  d\right)  \varphi_{2}\left(  h\right)  h\nonumber\\
&  =\varphi_{1}\circ\varphi_{1}(a)\varphi_{1}\circ\varphi_{2}\left(  c\right)
\varphi_{1}\left(  h\right)  \varphi_{2}\circ\varphi_{1}(b)\varphi_{2}%
\circ\varphi_{2}\left(  d\right)  \varphi_{2}\left(  h\right)  h,
\end{align}
where $\left(  \circ\right)  $ is the composition of automorphisms. Using the
cancellativity of $\mathcal{\bar{A}}_{2}^{\left(  \varepsilon\right)  }$, we
get%
\begin{equation}
\mathbf{\rho}\left(  a^{\prime},b^{\prime},c^{\prime},d^{\prime}\right)
\varphi_{1}\circ\varphi_{2}\left(  b\right)  \varphi_{2}\circ\varphi
_{1}(c)=\varphi_{1}\circ\varphi_{2}\left(  c\right)  \varphi_{2}\circ
\varphi_{1}(b). \label{rf}%
\end{equation}

Because the automorphisms $\varphi_{1,2}$ preserve grading, after implementing
almost ($\varepsilon$-) commutativity (\ref{e0}), the r.h.s. of (\ref{rf})
becomes $\varepsilon\left(  b^{\prime},c^{\prime}\right)  \varphi_{2}%
\circ\varphi_{1}(b)\varphi_{1}\circ\varphi_{2}\left(  c\right)  $ which gives
(\ref{re}) for commuting automorphisms.
\end{proof}

The higher arity cases are more non-trivial, and very cumbersome. Therefore,
we restrict ourselves by the case $n=3$ only.

\begin{theorem}
Let $\mathcal{A}_{3}^{\left(  \rho\right)  }=\left\langle A\mid\mu_{3},\nu
_{2}\right\rangle $ be an almost medial ($\mathbf{\rho}$-commutative)
$G$-graded ternary algebra over a field $\Bbbk$. Then there exists an almost
commutative ($\varepsilon$-commutative $G$-graded binary algebra
$\mathcal{\bar{A}}_{2}^{\left(  \varepsilon\right)  }=\left\langle A\mid
\bar{\mu}_{2}\right\rangle $, three commuting grading preserving automorphisms
$\varphi_{1,2,3}$ and a fixed element $h\in A$, such that (cf. (\ref{ma}))%
\begin{align}
\mu_{3}\left[  a,b,c\right]   &  =\bar{\mu}_{2}\left[  \bar{\mu}_{2}\left[
\bar{\mu}_{2}\left[  \varphi_{1}(a),\varphi_{2}\left(  b\right)  \right]
,h\right]  ,h\right] \nonumber\\
&  \equiv\varphi_{1}(a)\varphi_{2}\left(  b\right)  \varphi_{3}\left(
c\right)  h,\ \forall a,b,c,h\in A\label{m3}\\
\mathbf{\rho}^{\left(  3^{2}\right)  }\left(  \mathbf{\hat{A}}_{\left(
3^{2}\right)  }^{\prime}\right)   &  =\varepsilon\left(  a_{12}^{\prime
},a_{31}^{\prime}\right)  \varepsilon\left(  a_{12}^{\prime},a_{21}^{\prime
}\right)  \varepsilon\left(  a_{13}^{\prime},a_{31}^{\prime}\right)
\varepsilon\left(  a_{13}^{\prime},a_{32}^{\prime}\right)  \varepsilon\left(
a_{23}^{\prime},a_{32}^{\prime}\right)  \varepsilon\left(  a_{23}^{\prime
},a_{31}^{\prime}\right)  ,\label{r3}\\
\mathbf{\hat{A}}_{\left(  3^{2}\right)  }^{\prime}  &  =\left(  a_{ij}%
^{\prime}\right)  ,\ \ \forall a_{ij}^{\prime}\in G,\ \ \ i,j=1,\ldots
,3,\nonumber
\end{align}
where we denote $\bar{\mu}_{2}\left[  a,b\right]  \equiv ab$.
\end{theorem}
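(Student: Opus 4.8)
The plan is to mimic the binary proof (Theorem with eq.~(\ref{mab})--(\ref{re})) but now carrying the ``linear'' presentation (\ref{m3}) through the ternary mediality relation. First I would write out the $3$-ary mediality property (\ref{mna}) for $n=3$ explicitly: it equates the ``stack'' product of the $3\times 3$ matrix polyad $\mathbf{\hat A}_{(9)}=(a_{ij})$ with the product of its transpose, i.e.
\begin{equation}
\mu_{3}\bigl[\mu_{3}[a_{11},a_{12},a_{13}],\mu_{3}[a_{21},a_{22},a_{23}],\mu_{3}[a_{31},a_{32},a_{33}]\bigr]
\end{equation}
versus the same expression with $a_{ij}\leftrightarrow a_{ji}$. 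Then I substitute $\mu_{3}[x,y,z]=\varphi_{1}(x)\varphi_{2}(y)\varphi_{3}(z)h$ from (\ref{m3}) into all four inner multiplications and the outer one. As in the binary case, each inner factor $\mu_{3}[a_{i1},a_{i2},a_{i3}]$ becomes a word $\varphi_{1}(a_{i1})\varphi_{2}(a_{i2})\varphi_{3}(a_{i3})h$, and after applying the outer $\mu_{3}$ with its automorphisms $\varphi_{j}$ (which are algebra homomorphisms, so they distribute over $\bar\mu_{2}$ and over $h$) one obtains on each side a product of $12$ elements of $\mathcal{\bar A}_{2}^{(\varepsilon)}$: nine terms of the form $\varphi_{j}\circ\varphi_{i}(a_{ij})$ (for $i,j=1,2,3$ with an appropriate index pairing) interspersed with fixed elements $\varphi_{j}(h)$ and $h$.

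Next, using cancellativity of $\mathcal{\bar A}_{2}^{(\varepsilon)}$, I cancel all the terms that are common to both sides: the $\varphi_{j}(h)$'s, the final $h$, and — crucially — those composed-automorphism terms whose position and index content coincide on the two sides (these are the ``diagonal'' entries $a_{ii}$, which are fixed by transposition, together with whichever off-diagonal entries happen not to move past each other). What survives is a relation asserting that a certain product of the six off-diagonal terms $\varphi_{j}\circ\varphi_{i}(a_{ij})$ in one order equals the same six terms in the transposed order. I then rewrite the left order into the right order by a sequence of adjacent transpositions, each of which costs one factor of the commutation factor $\varepsilon$ via the almost-commutativity relation (\ref{e0}); tracking which pairs $(a_{ij}^{\prime},a_{kl}^{\prime})$ get swapped and in which direction yields exactly the six-factor product in (\ref{r3}). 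Since the $\varphi_{j}$ are grading-preserving, $(\varphi_{j}\circ\varphi_{i}(a))^{\prime}=a^{\prime}$, so the $\varepsilon$-arguments are the raw gradings $a_{ij}^{\prime}$ as claimed, and commutativity of the $\varphi_{j}$'s is used exactly where it was in the binary proof, to identify $\varphi_{j}\circ\varphi_{i}$ with $\varphi_{i}\circ\varphi_{j}$ when matching the two sides term-by-term.

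The conversely-needed direction — that such $\mathcal{\bar A}_{2}^{(\varepsilon)}$, $\varphi_{1,2,3}$, $h$ actually exist — I would handle exactly as in the binary case: take $\mathcal{\bar A}_{2}^{(\varepsilon)}$ with underlying set $A$ and define $\bar\mu_{2}$ and the $\varphi_{i}$ so that (\ref{m3}) holds (e.g. pick $h$ to be a fixed homogeneous element and define $\varphi_{i}$ via translations/sections, then verify $\bar\mu_{2}$ is associative and $\varepsilon$-commutative with the $\varepsilon$ read off from (\ref{r3})); this is the ternary analogue of the standard Toyoda construction and is routine once the computation above pins down the constraint. The main obstacle, and where all the real work lies, is the bookkeeping in the middle step: with twelve-letter words on each side one must carefully determine which of the nine composed-automorphism terms are genuinely cancellable and, for the remaining six, the precise minimal sequence of adjacent swaps needed to pass from the natural order to the transposed order — getting the index pairing wrong gives a different six-factor product than (\ref{r3}). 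A clean way to organize this is to note that the transposition of a $3\times 3$ matrix is the permutation $(12)(13)(23)$-type rearrangement of the nine entries fixing the three diagonal ones, decompose it into adjacent transpositions compatibly with the left-to-right reading order of the stacked product, and accumulate one $\varepsilon$ per transposition; the $3$ diagonal entries contribute nothing (their self-swap is trivial), which is why exactly $\binom{3}{2}\cdot 2 = 6$ factors of $\varepsilon$ appear, matching (\ref{r3}).
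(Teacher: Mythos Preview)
Your approach is essentially identical to the paper's: substitute the linear presentation (\ref{m3}) into the ternary almost-mediality relation (\ref{rn2}), cancel in $\mathcal{\bar A}_{2}^{(\varepsilon)}$ the diagonal terms and the various $\varphi_{j}(h)$ and $h$ factors, and then use $\varepsilon$-commutativity together with $\varphi_{i}\circ\varphi_{j}=\varphi_{j}\circ\varphi_{i}$ on the surviving six off-diagonal entries to read off (\ref{r3}). The paper presents exactly this computation (in matrix form) and stops there; your extra remarks on organizing the transposition count and on the existence/Toyoda-construction direction go a bit beyond what the paper writes but are consistent with it.
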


\begin{proof}
Using the matrix form of ternary ($n=3$) almost regularity (\ref{rn2}) and
inserting there the ternary \textquotedblleft linear\textquotedblright%
\ presentation (\ref{m3}) we get (in matrix form), $\forall a_{ij}\in
A,\ \ \ i,j=1,\ldots,3,$%
\begin{align}
\mathbf{\rho}^{\left(  3^{2}\right)  }\left(  \mathbf{\hat{A}}_{\left(
3^{2}\right)  }^{\prime}\right)   &  \left(
\begin{array}
[c]{ccc}%
\varphi_{1}\circ\varphi_{1}\left(  a_{11}\right)  & \varphi_{1}\circ
\varphi_{2}\left(  a_{12}\right)  & \varphi_{1}\circ\varphi_{3}\left(
a_{13}\right) \\
\varphi_{2}\circ\varphi_{1}\left(  a_{21}\right)  & \varphi_{2}\circ
\varphi_{2}\left(  a_{22}\right)  & \varphi_{2}\circ\varphi_{3}\left(
a_{23}\right) \\
\varphi_{3}\circ\varphi_{1}\left(  a_{31}\right)  & \varphi_{3}\circ
\varphi_{2}\left(  a_{32}\right)  & \varphi_{3}\circ\varphi_{3}\left(
a_{33}\right)
\end{array}
\right) \\
&  =\left(
\begin{array}
[c]{ccc}%
\varphi_{1}\circ\varphi_{1}\left(  a_{11}\right)  & \varphi_{1}\circ
\varphi_{2}\left(  a_{21}\right)  & \varphi_{1}\circ\varphi_{3}\left(
a_{31}\right) \\
\varphi_{2}\circ\varphi_{1}\left(  a_{12}\right)  & \varphi_{2}\circ
\varphi_{2}\left(  a_{22}\right)  & \varphi_{2}\circ\varphi_{3}\left(
a_{32}\right) \\
\varphi_{3}\circ\varphi_{1}\left(  a_{13}\right)  & \varphi_{3}\circ
\varphi_{2}\left(  a_{23}\right)  & \varphi_{3}\circ\varphi_{3}\left(
a_{33}\right)
\end{array}
\right)  .
\end{align}
Applying the cancellativity of the binary algebra $\mathcal{\bar{A}}%
_{2}^{\left(  \varepsilon\right)  }$, we have%
\begin{align}
&  \mathbf{\rho}^{\left(  3^{2}\right)  }\left(  \mathbf{\hat{A}}_{\left(
3^{2}\right)  }^{\prime}\right)  \varphi_{1}\circ\varphi_{2}\left(
a_{12}\right)  \varphi_{1}\circ\varphi_{3}\left(  a_{13}\right)  \varphi
_{2}\circ\varphi_{1}\left(  a_{21}\right)  \varphi_{2}\circ\varphi_{3}\left(
a_{23}\right)  \varphi_{3}\circ\varphi_{1}\left(  a_{31}\right)  \varphi
_{3}\circ\varphi_{2}\left(  a_{32}\right) \nonumber\\
&  =\varphi_{1}\circ\varphi_{2}\left(  a_{21}\right)  \varphi_{1}\circ
\varphi_{3}\left(  a_{31}\right)  \varphi_{2}\circ\varphi_{1}\left(
a_{12}\right)  \varphi_{2}\circ\varphi_{3}\left(  a_{32}\right)  \varphi
_{3}\circ\varphi_{1}\left(  a_{13}\right)  \varphi_{3}\circ\varphi_{2}\left(
a_{23}\right)  . \label{ra3}%
\end{align}
Implementing almost ($\varepsilon$-) commutativity (\ref{e0}) on the r.h.s. of
(\ref{ra3}), we arrive (for pairwise commuting grading preserving
automorphisms $\varphi_{i}\circ\varphi_{j}=\varphi_{j}\circ\varphi
_{i},\ i,j=1,2,3$) at (\ref{r3}).
\end{proof}

\section{\textsc{Binary tensor categories}}

We now apply the above ideas to construct a special kind of \textit{categories
with multiplication} \cite{ben63,macl63} which appeared already in
\cite{tan39} and later on were called tensor categories and monoidal
categories (as they \textquotedblleft remind\textquotedblright\ us of the
structure of a monoid) \cite{maclane1}. For reviews, see, e.g.
\cite{cal/eti,mue2010}. The monoidal categories can be considered as the
\textit{categorification} \cite{bae/dol98} of a monoid object, and can be
treated as an instance of the \textit{microcosm principle}: \textquotedblleft%
\textsf{certain algebraic structures can be defined in any category equipped
with a categorified version of the same structure}\textquotedblright%
\ \cite{bae/dol98a}. We start from the definitions of categories
\cite{ada/her/str,borceaux1} and binary tensor categories \cite{maclane1} (in
our notation).

Let $\mathcal{C}$ be a category with the class of objects $\operatorname*{Ob}%
\mathcal{C}$ and morphisms $\operatorname*{Mor}\mathcal{C}$, such that the
arrow from the source $X_{1}$ to the target $X_{2}$ is defined by
$\operatorname*{Mor}\mathcal{C}\ni\mathsf{f}_{12}:X_{1}\rightarrow X_{2}$,
$X_{1,2}\in\operatorname*{Ob}\mathcal{C}$, and usually $\operatorname*{Hom}%
_{C}\left(  X_{1},X_{2}\right)  $ denotes all arrows which do not intersect.
If $\operatorname*{Ob}\mathcal{C}$ and $\operatorname*{Mor}\mathcal{C}$ are
sets, the category is \textit{small}. The composition $\left(  \circ\right)  $
of three morphisms, their associativity and the identity morphism
($\operatorname*{id}_{X}$) are defined in the standard way \cite{maclane1}.

If $\mathcal{C}$ and $\mathcal{C}^{\prime}$ are two categories, then a mapping
between them is called a \textit{covariant} \textit{functor} $\mathit{F}%
:\mathcal{C}\rightarrow\mathcal{C}^{\prime}$ which consists of two different
components: 1) the $X$-component is a mapping of objects $\mathit{F}%
_{\operatorname*{Ob}}:\operatorname*{Ob}\mathcal{C}\rightarrow
\operatorname*{Ob}\mathcal{C}^{\prime}$; 2) the $\mathsf{f}$-component is a
mapping of morphisms $\mathit{F}_{\operatorname*{Mor}}:\operatorname*{Mor}%
\mathcal{C}\rightarrow\operatorname*{Mor}\mathcal{C}^{\prime}$ such that
$\mathit{F}=\left\{  \mathit{F}_{\operatorname*{Ob}},\mathit{F}%
_{\operatorname*{Mor}}\right\}  $. A functor preserves the identity morphism
$\mathit{F}_{\operatorname*{Mor}}\left(  \operatorname*{id}_{X}\right)
=\operatorname*{id}_{\mathit{F}_{\operatorname*{Ob}}\left(  X\right)  }$ and
the composition of morphisms $\mathit{F}_{\operatorname*{Mor}}\left(
\mathsf{f}_{23}\circ\mathsf{f}_{12}\right)  =\mathit{F}_{\operatorname*{Mor}%
}\left(  \mathsf{f}_{23}\right)  \circ^{\prime}\mathit{F}_{\operatorname*{Mor}%
}\left(  \mathsf{f}_{12}\right)  $ ($=\mathit{F}_{\operatorname*{Mor}}\left(
\mathsf{f}_{12}\right)  \circ^{\prime}\mathit{F}_{\operatorname*{Mor}}\left(
\mathsf{f}_{23}\right)  $ for a \textit{contravariant functor}), where
$\left(  \circ^{\prime}\right)  $ is the composition in $\mathcal{C}^{\prime}$.

The (binary) \textit{product category} $\mathcal{C}\times\mathcal{C}^{\prime}$
consists of all pairs of objects $\left(  \operatorname*{Ob}\mathcal{C}%
,\operatorname*{Ob}\mathcal{C}^{\prime}\right)  $, morphisms $\left(
\operatorname*{Mor}\mathcal{C},\operatorname*{Mor}\mathcal{C}^{\prime}\right)
$ and identities $\left(  \operatorname*{id}_{X},\operatorname*{id}%
_{X^{\prime}}\right)  $, while the composition $\left(  \circ^{\prime\prime
}\right)  $ is made component-wise%
\begin{align}
&  \left(  \mathsf{f}_{23},\mathsf{f}_{23}^{\prime}\right)  \circ
^{\prime\prime}\left(  \mathsf{f}_{12},\mathsf{f}_{12}^{\prime}\right)
=\left(  \mathsf{f}_{23}\circ\mathsf{f}_{12},\mathsf{f}_{23}^{\prime}%
\circ^{\prime}\mathsf{f}_{12}^{\prime}\right)  ,\label{ff}\\
&  \mathsf{f}_{ij}:X_{i}\rightarrow X_{j},\forall X_{i}\in\operatorname*{Ob}%
\mathcal{C},\mathsf{f}_{ij}^{\prime}:X_{i}^{\prime}\rightarrow X_{j}^{\prime
},\forall X_{i}^{\prime}\in\operatorname*{Ob}\mathcal{C}^{\prime
},i,j=1,2,3,\nonumber
\end{align}
and by analogy this may be extended for more multipliers. A functor on a
binary product category is called a \textit{bifunctor} (\textit{multifunctor}%
). A functor consists of two components\footnote{Usually \cite{maclane1},
which are often denoted by the same letter, but for clarity we will
distinguish them, because their action, arguments and corresponding
commutative diagrams are different.} $\left\{  \mathit{F}_{\operatorname*{Ob}%
},\mathit{F}_{\operatorname*{Mor}}\right\}  $, and therefore a mapping between
two functors $\mathit{F}$ and $\mathit{G}$ should also be two-component
$\mathrm{T}^{FG}=\left\{  \mathrm{T}_{\operatorname*{Ob}}^{FG},\mathrm{T}%
_{\operatorname*{Mor}}^{FG}\right\}  $. Without other conditions
$\mathrm{T}^{FG}$ is called an \textit{infra-natural transformation} from
$\mathit{F}$ to $\mathit{G}$. A \textit{natural transformation} (denoted by
the double arrow $\mathrm{T}^{FG}:\mathit{F}\Rightarrow\mathit{G}$) is defined
by the consistency condition of the above mappings in $\mathcal{C}^{\prime}$%
\begin{equation}
\mathrm{T}_{\operatorname*{Ob}}^{FG}\circ^{\prime}\mathit{F}%
_{\operatorname*{Mor}}=\mathit{G}_{\operatorname*{Mor}}\circ^{\prime
}\mathrm{T}_{\operatorname*{Ob}}^{FG}. \label{tf}%
\end{equation}

Application to objects gives the following commutative diagram for the natural
transformations (bifunctoriality)\begin{equation}
\xymatrix@R+9mm@C+6mm@L+2mm{ 
\mathit{F}_{\operatorname*{Ob}}\left( X_{1}\right) \equiv X_{1}^{\prime F} 
\ar[d]_{\mathrm{T}_{\operatorname*{Ob}}^{FG}\left( X_{1}\right) } 
\ar[rrr]^{\mathit{F}_{\operatorname*{Mor}}\left( \mathsf{f}\right) \equiv \mathsf{f}^{\prime F}} 
\ar[drrr]^(0.6){\mathrm{T}_{\operatorname*{Mor}}^{FG}\left( \mathsf{f}\right)}
& && \mathit{F}_{\operatorname*{Ob}}\left( X_{2}\right) \equiv X_{2}^{\prime F} 
\ar[d]^{\mathrm{T}_{\operatorname*{Ob}}^{FG}\left( X_{2}\right)} 
\\ 
\mathit{G}_{\operatorname*{Ob}}\left( X_{1}\right) \equiv X_{1}^{\prime G} 
\ar[rrr]_{\mathit{G}_{\operatorname*{Mor}}\left( \mathsf{f}\right) \equiv \mathsf{f}^{\prime G} }
&&& \mathit{G}_{\operatorname*{Ob}}\left( X_{2}\right) \equiv X_{2}^{\prime G}}
\label{x-com}
\end{equation}which is the consistency of
the objects in $\mathcal{C}^{\prime}$ transformed by $\mathit{F}$ and
$\mathit{G}$. The the diagonal in (\ref{x-com}) may also be interpreted as the
action of the natural transformation on a morphism $\mathrm{T}%
_{\operatorname*{Mor}}^{FG}\left(  \mathsf{f}\right)  :\mathit{F}%
_{\operatorname*{Ob}}\left(  X_{1}\right)  \rightarrow\mathit{G}%
_{\operatorname*{Ob}}\left(  X_{2}\right)  $, $\mathsf{f}:X_{1}\rightarrow
X_{2}$, $\mathsf{f}\in\operatorname*{Mor}\mathcal{C}$, $X_{1},X_{2}%
\in\operatorname*{Ob}\mathcal{C}$ , such that%
\begin{equation}
\mathrm{T}_{\operatorname*{Mor}}^{FG}\left(  \mathsf{f}\right)  =\mathrm{T}%
_{\operatorname*{Ob}}^{FG}\left(  X_{2}\right)  \circ^{\prime}\mathit{F}%
_{\operatorname*{Mor}}\left(  \mathsf{f}\right)  =\mathit{G}%
_{\operatorname*{Mor}}\left(  \mathsf{f}\right)  \circ^{\prime}\mathrm{T}%
_{\operatorname*{Ob}}^{FG}\left(  X_{1}\right)  ,
\end{equation}
where the second equality holds valid due to the naturality (\ref{tf}).

In a concise form the natural transformations are described by the commutative diagram%

\begin{equation}
\xymatrix{ \mathcal{C} \ar@/^1pc/[rrr]^{\mathit{F}}="0" \ar@/_1pc/[rrr]_{\mathit{G}}="1" &&& \mathcal{C}^{\prime} \\ \ar@{=>}"0"+<0ex,-2ex> ;"1"+<0ex,+2ex>^{\mathrm{T}^{FG}} }
\end{equation}

For a category $\mathcal{C}$, the \textit{identity functor} $\mathit{Id}%
_{\mathcal{C}}=\left(  \mathit{Id}_{\mathcal{C},\operatorname*{Ob}%
},\mathit{Id}_{\mathcal{C},\operatorname*{Mor}}\right)  $ is defined by
$\mathit{Id}_{\mathcal{C},\operatorname*{Ob}}\left(  X\right)  =X$,
$\mathit{Id}_{\mathcal{C},\operatorname*{Mor}}\left(  \mathsf{f}\right)
=\mathsf{f}$, $\forall X\in\operatorname*{Ob}\mathcal{C}$, $\forall
\mathsf{f}\in\operatorname*{Mor}\mathcal{C}$. Two categories $\mathcal{C}$ and
$\mathcal{C}^{\prime}$ are \textit{equivalent}, if there exist two functors
$\mathit{F}$ and $\mathit{G}$ and two natural transformations $\mathrm{T}%
^{FG}:\mathit{Id}_{\mathcal{C}^{\prime}}\Rightarrow\mathit{F}\circ^{\prime
}\mathit{G}$ and $\mathrm{T}^{GF}:\mathit{G}\circ\mathit{F}\Rightarrow
\mathit{Id}_{\mathcal{C}}$.

For more details and standard properties of categories, see, e.g.
\cite{maclane1,ada/her/str,borceaux1} and refs therein.

The categorification \cite{bae/dol98,cra/yet} of most algebraic structures can
be provided by endowing categories with an additional operation
\cite{ben63,macl63} \textquotedblleft reminding\textquotedblright\ us of the
tensor product \cite{maclane1}.

A binary \textquotedblleft\textit{magmatic}\textquotedblright\ \textit{tensor
category} is $\left(  \mathcal{C},\mathit{M}^{\left(  2\otimes\right)
}\right)  $, where $\mathit{M}^{\left(  2\otimes\right)  }\equiv
\otimes:\mathcal{C}\times\mathcal{C}\rightarrow\mathcal{C}$ is a
bifunctor\footnote{We use this notation with brackets $\mathit{M}^{\left(
2\otimes\right)  }$ \cite{agu/mah}, because they are convenient for further
consideration of the $n$-ary case \cite{dup2019}.}. In component form the
bifunctor is $\mathit{M}^{\left(  2\otimes\right)  }=\left\{  \mathit{M}%
_{\operatorname*{Ob}}^{\left(  2\otimes\right)  },\mathit{M}%
_{\operatorname*{Mor}}^{\left(  2\otimes\right)  }\right\}  $, where
$\mathit{M}_{\operatorname*{Mor}}^{\left(  2\otimes\right)  }$ is%
\begin{align}
&  \mathit{M}_{\operatorname*{Mor}}^{\left(  2\otimes\right)  }\left[
\mathsf{f}_{11^{\prime}},\mathsf{f}_{22^{\prime}}\right]  =\mathit{M}%
_{\operatorname*{Ob}}^{\left(  2\otimes\right)  }\left[  X_{1},X_{2}\right]
\rightarrow\mathit{M}_{\operatorname*{Ob}}^{\left(  2\otimes\right)  }\left[
X_{1}^{\prime},X_{2}^{\prime}\right]  ,\label{mor}\\
&  \mathsf{f}_{ii^{\prime}}:X_{i}\rightarrow X_{i}^{\prime},\mathsf{f}%
_{ii^{\prime}}\in\operatorname*{Mor}\mathcal{C},\forall X_{i},X_{i^{\prime}%
}\in\operatorname*{Ob}\mathcal{C},\ \ \ i=1,2.\nonumber
\end{align}

The composition of the $\mathsf{f}$-components is determined by the binary
mediality property (cf. (\ref{mm}))%
\begin{align}
&  \mathit{M}_{\operatorname*{Mor}}^{\left(  2\otimes\right)  }\left[
\mathsf{f}_{23},\mathsf{g}_{23}\right]  \circ\mathit{M}_{\operatorname*{Mor}%
}^{\left(  2\otimes\right)  }\left[  \mathsf{f}_{12},\mathsf{g}_{12}\right]
=\mathit{M}_{\operatorname*{Mor}}^{\left(  2\otimes\right)  }\left[
\mathsf{f}_{23}\circ\mathsf{f}_{12},\mathsf{g}_{23}\circ\mathsf{g}%
_{12}\right]  ,\label{m2}\\
&  \mathsf{f}_{ij}:X_{i}\rightarrow X_{j},\mathsf{g}_{ij}:Y_{i}\rightarrow
Y_{j},\ \ \ \mathsf{f}_{ij},\mathsf{g}_{ij}\in\operatorname*{Mor}%
\mathcal{C},\forall X_{i},Y_{i}\in\operatorname*{Ob}\mathcal{C}%
,i=1,2,3.\nonumber
\end{align}

The identity of the tensor product satisfies%
\begin{equation}
\mathit{M}_{\operatorname*{Mor}}^{\left(  2\otimes\right)  }\left[
\operatorname*{id}\nolimits_{X_{1}},\operatorname*{id}\nolimits_{X_{2}%
}\right]  =\operatorname*{id}\nolimits_{\mathit{M}_{\operatorname*{Ob}%
}^{\left(  2\otimes\right)  }\left[  X_{1},X_{2}\right]  }. \label{id}%
\end{equation}

We call a category $\mathcal{C}$ a \textit{strict} (\textit{binary})
\textit{semigroupal} \cite{yetter,lu/ye/hu}\textit{ }(or \textit{strictly
associative semigroupal category} \cite{boy2007}, also, \textit{semi-monoidal}
\cite{koc2008}), if the bifunctor $\mathit{M}^{\left(  2\otimes\right)  }$
satisfies \textsf{only} (without unit objects and unitors) the binary
associativity condition $\left(  X_{1}\otimes X_{2}\right)  \otimes
X_{3}=X_{1}\otimes\left(  X_{2}\otimes X_{3}\right)  $ and $\left(
\mathsf{f}_{1}\otimes\mathsf{f}_{2}\right)  \otimes\mathsf{f}_{3}%
=\mathsf{f}_{1}\otimes\left(  \mathsf{f}_{2}\otimes\mathsf{f}_{3}\right)  $,
where $X_{i}\in\operatorname*{Ob}\mathcal{C}$, $\mathsf{f}_{i}\in
\operatorname*{Mor}\mathcal{C}$, $i=1,2,3$ (also denoted by $\mathtt{sSGCat}%
$). Strict associativity is the equivalence%
\begin{align}
\mathit{M}_{\operatorname*{Ob}}^{\left(  2\otimes\right)  }\left[
\mathit{M}_{\operatorname*{Ob}}^{\left(  2\otimes\right)  }\left[  X_{1}%
,X_{2}\right]  ,X_{3}\right]   &  =\mathit{M}_{\operatorname*{Ob}}^{\left(
2\otimes\right)  }\left[  X_{1},\mathit{M}_{\operatorname*{Ob}}^{\left(
2\otimes\right)  }\left[  X_{2},X_{3}\right]  \right]  ,\label{as1}\\
\mathit{M}_{\operatorname*{Mor}}^{\left(  2\otimes\right)  }\left[
\mathit{M}_{\operatorname*{Mor}}^{\left(  2\otimes\right)  }\left[
\mathsf{f}_{1},\mathsf{f}_{2}\right]  ,\mathsf{f}_{3}\right]   &
=\mathit{M}_{\operatorname*{Mor}}^{\left(  2\otimes\right)  }\left[
\mathsf{f}_{1},\mathit{M}_{\operatorname*{Mor}}^{\left(  2\otimes\right)
}\left[  \mathsf{f}_{2},\mathsf{f}_{3}\right]  \right]  . \label{as2}%
\end{align}

\begin{remark}
\label{rem-shape}Usually, only the first equation for the $X$-components is
presented in the definition of associativity (and other properties), while the
equation for the $\mathsf{f}$-components is assumed to be satisfied
\textquotedblleft automatically\textquotedblright\ having the same form
\cite{maclane1,stasheff}. In some cases, the diagrams for $\mathit{M}%
_{\operatorname*{Ob}}^{\left(  2\otimes\right)  }$ and $\mathit{M}%
_{\operatorname*{Mor}}^{\left(  2\otimes\right)  }$ can fail to coincide and
have different shapes, for instance, in the case of the \textit{dagger
categories} dealing with the \textquotedblleft reverse\textquotedblright%
\ morphisms \cite{abr/coe08}.
\end{remark}

The associativity relations guarantee that in any product of objects or
morphisms different ways of inserting parentheses lead to equivalent results
(as for semigroups).

In the case of a \textit{non-strict semigroupal category }$\mathtt{SGCat}$
(with no unit objects and unitors) \cite{yetter,boy2007} (see, also,
\cite{lu/ye/hu,elg2004,dav2007}) a collection of mappings can be introduced
which are just the isomorphisms (\textit{associators}) $\mathrm{A}^{\left(
3\otimes\right)  }=\left\{  \mathrm{A}_{\operatorname*{Ob}}^{\left(
3\otimes\right)  },\mathrm{A}_{\operatorname*{Mor}}^{\left(  3\otimes\right)
}\right\}  $ from the left side functor to the right side functor of
(\ref{as1})--(\ref{as2}) as%
\begin{equation}
\mathrm{A}_{\operatorname*{Ob}}^{\left(  3\otimes\right)  }\left(  X_{1}%
,X_{2},X_{3}\right)  :\mathit{M}_{\operatorname*{Ob}}^{\left(  2\otimes
\right)  }\left[  \mathit{M}_{\operatorname*{Ob}}^{\left(  2\otimes\right)
}\left[  X_{1},X_{2}\right]  ,X_{3}\right]  \overset{\simeq}{\rightarrow
}\mathit{M}_{\operatorname*{Ob}}^{\left(  2\otimes\right)  }\left[
X_{1},\mathit{M}_{\operatorname*{Ob}}^{\left(  2\otimes\right)  }\left[
X_{2},X_{3}\right]  \right]  , \label{a1}%
\end{equation}
where $\mathrm{A}_{\operatorname*{Mor}}^{\left(  3\otimes\right)  }$ may be
interpreted similar to the diagonal in (\ref{x-com}), because the associators
are natural transformations \cite{maclane1} or \textit{tri-functorial
isomorphisms} (in the terminology of \cite{boy2007}). Now different ways of
inserting parentheses in a product of $N$ objects give different results in
the absence of conditions on the associator $\mathrm{A}^{\left(
3\otimes\right)  }$. However, if the associator $\mathrm{A}^{\left(
3\otimes\right)  }$ satisfies some consistency relations, they can give
isomorphic results, such that the corresponding diagrams commute, which is the
statement of the \textit{coherence theorem} \cite{macl63,kel64}. This can also
be applied to $\mathtt{SGCat}$, because it can be proved independently of
existence of units \cite{yetter,boy2007,lu/ye/hu}. It was shown \cite{macl63}
that it is sufficient to consider one commutative diagram using the associator
(the \textit{associativity constraint}) for two different rearrangements of
parentheses for 3 tensor multiplications of 4 objects, giving the following
isomorphism%
\begin{equation}
\mathit{M}_{\operatorname*{Ob}}^{\left(  2\otimes\right)  }\left[
\mathit{M}_{\operatorname*{Ob}}^{\left(  2\otimes\right)  }\left[
\mathit{M}_{\operatorname*{Ob}}^{\left(  2\otimes\right)  }\left[  X_{1}%
,X_{2}\right]  ,X_{3}\right]  ,X_{4}\right]  \overset{\simeq}{\rightarrow
}\mathit{M}_{\operatorname*{Ob}}^{\left(  2\otimes\right)  }\left[
X_{1},\mathit{M}_{\operatorname*{Ob}}^{\left(  2\otimes\right)  }\left[
X_{2},\mathit{M}_{\operatorname*{Ob}}^{\left(  2\otimes\right)  }\left[
X_{3},X_{4}\right]  \right]  \right]  . \label{mob}%
\end{equation}
The associativity constraint is called a \textit{pentagon axiom}
\cite{maclane1}, such that the diagram\footnote{We omit $\mathit{M}%
_{\operatorname*{Ob}}^{\left(  2\otimes\right)  }$ in diagrams by leaving the
square brackets only and use the obvious subscripts in $\mathrm{A}^{\left(
3\otimes\right)  }$.} \begin{equation}
\xymatrix@R+3mm@C-20mm{
      &\left[  \left[  X_{1},\left[  X_{2},X_{3}\right]  \right]  ,X_{4}\right]
      \ar[rr]\ar@{}@<1.1ex>[rr]^{ \mathrm{A}_{1,23,4}^{\left(  3\otimes\right)  }}&&
      \left[  X_{1},\left[  \left[  X_{2},X_{3}\right]  ,X_{4}\right]  \right]
      \ar[dr]^<>(.6){\operatorname*{id}\nolimits_{X_1}\otimes\mathrm{A}_{2,3,4}^{\left(  3\otimes\right)  }}
      \\
      \left[  \left[
\left[  X_{1}%
,X_{2}\right]  ,X_{3}\right]  ,X_{4}\right]
      \ar[ur]^<>(.4){\mathrm{A}_{1,2,3}^{\left(  3\otimes\right)  }\otimes \operatorname*{id}\nolimits_{X_4}}
      \ar[rrrr]^{\simeq}
      \ar[drr]_<>(.5){\mathrm{A}_{12,3,4}^{\left(  3\otimes\right)  }}&&&&
      \left[ X_{1}, \left[X_{2},
\left[  %
 X_{3} ,X_{4}\right] \right] \right]
      \\
      &&\left[  \left[  X_{1},X_{2}\right]  ,\left[  X_{3},X_{4}\right]  \right]
      \ar[rru]_<>(.6){\mathrm{A}_{1,2,34}^{\left(  3\otimes\right)  }}
      \\
      } \label{diag1}
\end{equation} commutes.

A similar condition for morphisms, but in another context (for $H$-spaces),
was presented in \cite{sta63,stasheff}. Note that there exists a different
(but not alternative) approach to natural associativity without the use of the
pentagon axiom \cite{joy2001}.

The transition from the semigroupal non-strict category $\mathtt{SGCat}$ to
the monoidal non-strict category $\mathtt{MonCat}$ can be done in a way
similar to passing from a semigroup to a monoid: by adding the \textit{unit
object} $E\in\operatorname*{Ob}\mathcal{C}$ and the (\textit{right and
left})\textit{ unitors }$\mathrm{U}_{\left(  1\right)  }^{\left(
2\otimes\right)  }=\left\{  \mathrm{U}_{\left(  1\right)  \operatorname*{Ob}%
}^{\left(  2\otimes\right)  },\mathrm{U}_{\left(  1\right)
\operatorname*{Mor}}^{\left(  2\otimes\right)  }\right\}  $ and $\mathrm{U}%
_{\left(  2\right)  }^{\left(  2\otimes\right)  }=\left\{  \mathrm{U}_{\left(
2\right)  \operatorname*{Ob}}^{\left(  2\otimes\right)  },\mathrm{U}_{\left(
2\right)  \operatorname*{Mor}}^{\left(  2\otimes\right)  }\right\}  $
(\textquotedblleft unit morphisms\textquotedblright\ which are functorial
isomorphisms, natural transformations) \cite{maclane1}%
\begin{align}
\mathrm{U}_{\left(  1\right)  \operatorname*{Ob}}^{\left(  2\otimes\right)  }
&  :\mathit{M}_{\operatorname*{Ob}}^{\left(  2\otimes\right)  }\left[
X,E\right]  \overset{\simeq}{\rightarrow}X,\label{lr1}\\
\mathrm{U}_{\left(  2\right)  \operatorname*{Ob}}^{\left(  2\otimes\right)  }
&  :\mathit{M}_{\operatorname*{Ob}}^{\left(  2\otimes\right)  }\left[
E,X\right]  \overset{\simeq}{\rightarrow}X,\ \ \forall X\in\operatorname*{Ob}%
\mathcal{C}, \label{lr2}%
\end{align}
and $\mathrm{U}_{\left(  1,2\right)  \operatorname*{Mor}}^{\left(
2\otimes\right)  }$ can be viewed as the diagonal in the diagram of naturality
similar to (\ref{x-com}). The unitors are connected with the associator
$\mathrm{A}^{\left(  3\otimes\right)  }$, such that the diagram
(\textit{triangle axiom}) \begin{equation}
\xymatrix@R+5mm@C+10mm{
    \left[ \left[ X_1,E \right],X_2\right] 
    \ar[dr]_{\mathrm{U}_{\left(  1\right)  \operatorname*{Ob}}^{\left(  2\otimes\right)  }\otimes\operatorname*{id}\nolimits_{X_2}} 
    \ar[rr]^{\mathrm{A}_{\operatorname*{Ob}}^{\left(  3\otimes\right)  }}
      &&  \left[ X_1,\left[ E ,X_2\right]\right] 
            \ar[dl]^{\operatorname*{id}\nolimits_{X_1}\otimes \mathrm{U}_{\left(  2\right)  \operatorname*{Ob}}^{\left(  2\otimes\right)  }} \\
      & \left[ X_1,X_2 \right] }\label{diag2}
\end{equation} commutes.

Using the above, the definition of a \textit{binary non-strict monoidal
category} $\mathtt{MonCat}$ can be given as the 6-tuple $\left(
\mathcal{C},\mathit{M}^{\left(  2\otimes\right)  },\mathrm{A}^{\left(
3\otimes\right)  },E,\mathrm{U}^{\left(  2\otimes\right)  }\right)  $ such
that the pentagon axiom (\ref{diag1}) and the triangle axiom (\ref{diag2}) are
satisfied \cite{macl63,maclane1} (see, also, \cite{kel64,kel65}).

The following \textquotedblleft normalizing\textquotedblright\ relations for
the unitors of a monoidal non-strict category%
\begin{equation}
\mathrm{U}_{\left(  1\right)  \operatorname*{Ob}}^{\left(  2\otimes\right)
}\left(  E\right)  =\mathrm{U}_{\left(  2\right)  \operatorname*{Ob}}^{\left(
2\otimes\right)  }\left(  E\right)  ,\label{uu}%
\end{equation}
can be proven \cite{joy/str1}, as well as that the diagrams \begin{equation}
\xymatrix@R+5mm@C-10mm{
    \left[ \left[ X_1 ,X_2\right], E \right] 
    \ar[dr]_{\mathrm{U}_{\left(  1\right)  \operatorname*{Ob}}^{\left(  2\otimes\right)  }} 
    \ar[rr]^{\mathrm{A}_{\operatorname*{Ob}}^{\left(  3\otimes\right)  }}
      &&  \left[ X_1,\left[ X_2,E\right]\right] 
            \ar[dl]^{\operatorname*{id}\nolimits_{X_1}\otimes\mathrm{U}_{\left(  1\right)  \operatorname*{Ob}}^{\left(  2\otimes\right)  }} \\
      & \left[ X_1,X_2 \right] }\, \, \, \, \, \; \; \;\;\;\;\;\
      \xymatrix@R+5mm@C-10mm{
    \left[ \left[ E,X_1 \right],X_2\right] 
    \ar[dr]_{\mathrm{U}_{\left(  2\right)  \operatorname*{Ob}}^{\left(  2\otimes\right)  }\otimes\operatorname*{id}\nolimits_{X_2}} 
    \ar[rr]^{\mathrm{A}_{\operatorname*{Ob}}^{\left(  3\otimes\right)  }}
      &&  \left[E, \left[ X_1, X_2\right]\right] 
            \ar[dl]^{\mathrm{U}_{\left(  2\right)  \operatorname*{Ob}}^{\left(  2\otimes\right)  }} \\
      & \left[ X_1,X_2 \right] } 
      \label{diag3}
\end{equation} commute.

The \textit{coherence theorem} \cite{ben63,macl63} proves that any diagram in
a non-strict monoidal category, which can be built from an associator
satisfying the pentagon axiom (\ref{diag1}) and unitors satisfying the
triangle axiom (\ref{diag2}), commutes. Another formulation \cite{maclane1}
states that every monoidal non-strict category is (monoidally) equivalent to a
monoidal strict one (see, also, \cite{kassel}).

Thus, it is important to prove analogs of the coherence theorem for various
existing generalizations of categories (having weak modification of units
\cite{koc2008,joy/koc,and2017}, and from the \textquotedblleft periodic
table\textquotedblright\ of higher categories \cite{bae/dol}), as well as for
further generalizations (e.g., $n$-ary ones below).

\section{\label{sec-ntensor}\textsc{Polyadic tensor categories}}

The arity of the additional multiplication in a category (the tensor product)
was previously taken to be binary. Here we introduce categories with tensor
multiplication which \textquotedblleft remind\textquotedblright\ $n$-ary
semigroups, $n$-ary monoids and $n$-ary groups \cite{dor3,pos} (see, also,
\cite{galmak1}), i.e. we provide the categorification \cite{cra/fre,cra/yet}
of \textquotedblleft higher-arity\textquotedblright\ structures according to
the Baez-Dolan microcosm principle \cite{bae/dol98a}. In our considerations we
use the term \textquotedblleft tensor category\textquotedblright\ in a wider
context, because it can include not only binary monoid-like structures and
their combinations, but also $n$-ary-like algebraic structures. It is
important to note that our construction is different from other higher
generalizations of categories\footnote{The terms \textquotedblleft$k$-ary
algebraic category\textquotedblright\ and \textquotedblleft$k$-ary
category\textquotedblright\ appeared in \cite{her71} and \cite{shu2012},
respectively, but they describe different constructions.}, such as
$2$-categories \cite{kel/str} and bicategories \cite{ben67}, $n$-categories
\cite{Bae97,Lei2002} and $n$-categories of $n$-groups \cite{ald/noo},
multicategories \cite{lam69,lei98,cru/shu}, $n$-tuple categories and multiple
categories \cite{grandis}, iterated ($n$-fold) monoidal categories
\cite{bal/fie/sch/vog1}, iterated icons \cite{che/gur}, and obstructed
categories \cite{dup/mar7,dup/mar2018a}. We introduce the categorification of
\textquotedblleft higher-arity\textquotedblright\ structures along
\cite{dup2019} and consider their properties, some of them are different from
the binary case (as in $n$-ary (semi)groups and $n$-ary monoids).

Let $\mathcal{C}$ be a category \cite{maclane1}, and introduce an additional
multiplication as an $n$-ary tensor product as in \cite{dup2018a,dup2019}.

\begin{definition}
An $n$-ary tensor product in a category $\mathcal{C}$ is an $n$-ary functor%
\begin{equation}
\mathit{M}^{\left(  n\otimes\right)  }:\overset{n}{\overbrace{\mathcal{C}%
\times\ldots\times\mathcal{C}}}\rightarrow\mathcal{C} \label{mc}%
\end{equation}
having the component form $\mathit{M}^{\left(  n\otimes\right)  }=\left\{
\mathit{M}_{\operatorname*{Ob}}^{\left(  n\otimes\right)  },\mathit{M}%
_{\operatorname*{Mor}}^{\left(  n\otimes\right)  }\right\}  $ where the
$\mathsf{f}$-component $\mathit{M}_{\operatorname*{Mor}}^{\left(
n\otimes\right)  }$ is%
\begin{align}
&  \mathit{M}_{\operatorname*{Mor}}^{\left(  n\otimes\right)  }\left[
\mathsf{f}_{11^{\prime}},\mathsf{f}_{22^{\prime}},\ldots\mathsf{f}%
_{nn^{\prime}}\right]  =\mathit{M}_{\operatorname*{Ob}}^{\left(
n\otimes\right)  }\left[  X_{1},X_{2},\ldots X_{n}\right]  \rightarrow
\mathit{M}_{\operatorname*{Ob}}^{\left(  n\otimes\right)  }\left[
X_{1}^{\prime},X_{2}^{\prime},\ldots X_{n}^{\prime}\right]  ,\label{mf}\\
&  \mathsf{f}_{ii^{\prime}}:X_{i}\rightarrow X_{i}^{\prime},\mathsf{f}%
_{ii^{\prime}}\in\operatorname*{Mor}\mathcal{C},\forall X_{i},X_{i}^{\prime
}\in\operatorname*{Ob}\mathcal{C},i=1,\ldots,n.\nonumber
\end{align}

\end{definition}

The $n$-ary composition of the $\mathsf{f}$-components (morphism products of
length $n$) is determined by the $n$-ary mediality property (cf. (\ref{mna}))%
\begin{align}
&  \mathit{M}_{\operatorname*{Mor}}^{\left(  n\otimes\right)  }\left[
\mathsf{f}^{\left(  1,1\right)  },\mathsf{f}^{\left(  1,2\right)  }%
,\ldots,\mathsf{f}^{\left(  1,n\right)  }\right]  \circ\ldots\circ
\mathit{M}_{\operatorname*{Mor}}^{\left(  n\otimes\right)  }\left[
\mathsf{f}^{\left(  n,1\right)  },\mathsf{f}^{\left(  n,2\right)  }%
,\ldots,\mathsf{f}^{\left(  n,n\right)  }\right]  ,\nonumber\\
&  =\mathit{M}_{\operatorname*{Mor}}^{\left(  n\otimes\right)  }\left[
\mathsf{f}^{\left(  1,1\right)  }\circ\mathsf{f}^{\left(  2,1\right)  }%
\circ\ldots\circ\mathsf{f}^{\left(  n,1\right)  },\ldots,\mathsf{f}^{\left(
1,n\right)  }\circ\ldots\circ\mathsf{f}^{\left(  n,n\right)  }\right]  ,\\
&  \mathsf{f}^{\left(  i,j\right)  }\in\operatorname*{Mor}\mathcal{C}%
,\ \ \ i,j=1,2\ldots,n.\nonumber
\end{align}

The identity morphism of the $n$-ary tensor product satisfies%
\begin{equation}
\mathit{M}_{\operatorname*{Mor}}^{\left(  n\otimes\right)  }\left[
\operatorname*{id}\nolimits_{X_{1}},\operatorname*{id}\nolimits_{X_{2}}%
,\ldots,\operatorname*{id}\nolimits_{X_{n}}\right]  =\operatorname*{id}%
\nolimits_{\mathit{M}_{\operatorname*{Ob}}^{\left(  n\otimes\right)  }\left[
X_{1},X_{2}\ldots,X_{n}\right]  }. \label{idn}%
\end{equation}

\begin{definition}
An $n$-ary tensor product $\mathit{M}^{\left(  n\otimes\right)  }$ which can
be constructed from a binary tensor product $\mathit{M}^{\prime\left(
2\otimes\right)  }$ by successive (iterative) repetitions is called an
\textit{arity-reduced tensor product}\footnote{By analogy with the
\textquotedblleft derived $n$-ary group\textquotedblright\ \cite{dor3,pos}.},
and otherwise it is called an \textit{arity-nonreduced tensor product}.
\end{definition}

Categories containing iterations of the binary tensor product were considered
in \cite{bal/fie/sch/vog1,che/gur}. We will mostly be interested in the
arity-nonreducible tensor products and their corresponding categories.

\begin{definition}
\label{def-magcat}A polyadic ($n$-ary) \textquotedblleft\textit{magmatic}%
\textquotedblright\ \textit{tensor category} is $\left(  \mathcal{C}%
,\mathit{M}^{\left(  n\otimes\right)  }\right)  $, where $\mathit{M}^{\left(
n\otimes\right)  }$ is an $n$-ary tensor product (functor (\ref{mc})), and it
is called an \textit{arity-reduced category }or \textit{arity-nonreduced
category} depending on its tensor product.
\end{definition}

\subsection{Polyadic semigroupal categories}

We call sequences of objects and morphisms $X$-polyads and $\mathsf{f}%
$-polyads \cite{pos}, and denote them $\mathfrak{X}$ and $\mathfrak{f}$,
respectively (as in (\ref{mass})).

\begin{definition}
\label{def-assoc}The $n$-ary functor $\mathit{M}^{\left(  n\otimes\right)  }$
is \textit{totally }($n$-ary)\textit{ associative}, if it satisfies the
following $\left(  n-1\right)  $ pairs of $X$ equivalences%
\begin{equation}
\mathit{M}_{\operatorname*{Ob}}^{\left(  n\otimes\right)  }\left[
\mathfrak{X},\mathit{M}_{\operatorname*{Ob}}^{\left(  n\otimes\right)
}\left[  \mathfrak{Y}\right]  ,\mathfrak{Z}\right]  =equivalent,\label{me1}%
\end{equation}
where $\mathfrak{X},\mathfrak{Y},\mathfrak{Z}$ are $X$-polyads of the
necessary length, and the total length of each $\left(  \mathfrak{X}%
,\mathfrak{Y},\mathfrak{Z}\right)  $-polyad is $2n-1$, while the internal
tensor products in (\ref{me1}) can be on any of the $n$ places.
\end{definition}

\begin{example}
In the ternary case ($n=3$) the total associativity for the $X$-polyads of the
length $5=2\cdot3-1$ gives $2=3-1$ pairs of equivalences%
\begin{align}
\mathit{M}_{\operatorname*{Ob}}^{\left(  3\otimes\right)  }\left[
\mathit{M}_{\operatorname*{Ob}}^{\left(  3\otimes\right)  }\left[  X_{1}%
,X_{2},X_{3}\right]  ,X_{4},X_{5}\right]   &  =\mathit{M}_{\operatorname*{Ob}%
}^{\left(  3\otimes\right)  }\left[  X_{1},\mathit{M}_{\operatorname*{Ob}%
}^{\left(  3\otimes\right)  }\left[  X_{2},X_{3},X_{4}\right]  ,X_{5}\right]
\nonumber\\
&  =\mathit{M}_{\operatorname*{Ob}}^{\left(  3\otimes\right)  }\left[
X_{1},X_{2},\mathit{M}_{\operatorname*{Ob}}^{\left(  3\otimes\right)  }\left[
X_{3},X_{4},X_{5}\right]  \right]  ,\\
\forall X_{i} &  \in\operatorname*{Ob}\mathcal{C},\ \ \ \forall\mathsf{f}%
_{i}\in\operatorname*{Mor}\mathcal{C},\ \ \ i=1,\ldots,5.\nonumber
\end{align}

\end{example}

\begin{definition}
\label{def-semicat}A category $\left(  \mathcal{C},\mathit{M}^{\left(
n\otimes\right)  }\right)  $ is called a \textit{polyadic }($n$-\textit{ary})
\textit{strict} \textit{semigroupal} \textit{category} $\mathtt{sSGCat}_{n}$,
if the bifunctor $\mathit{M}^{\left(  n\otimes\right)  }$ satisfies objects
and unitors) the $n$-ary associativity condition (\ref{me1}).
\end{definition}

Thus, in a polyadic strict semigroupal category for any (allowed, i.e. having
the size $k\left(  n-1\right)  +1$, $\forall k\in\mathbb{N}$, where $k$ is the
number of $n$-ary tensor multiplications) product of objects (or morphisms),
all different ways of inserting parentheses give equivalent results (as for
$n$-ary semigroups).

\subsection{$N$-ary coherence}

As in the binary case (\ref{a1}), the transition to non-strict categories
results in the consideration of \textsf{independent} isomorphisms instead of
the equivalence (\ref{me1}).

\begin{definition}
The $\left(  n-1\right)  $ pairs of $X$ and $\mathsf{f}$ isomorphisms
$\mathrm{A}^{\left(  2n-1\right)  \otimes}=\left\{  \mathrm{A}%
_{\operatorname*{Ob}}^{\left(  2n-1\right)  \otimes},\mathrm{A}%
_{\operatorname*{Mor}}^{\left(  2n-1\right)  \otimes}\right\}  $ such that%
\begin{equation}
\mathrm{A}_{i,\operatorname*{Ob}}^{\left(  2n-1\right)  \otimes}%
:\mathit{M}_{\operatorname*{Ob}}^{\left(  n\otimes\right)  }\left[
\mathfrak{X},\mathit{M}_{i,\operatorname*{Ob}}^{\left(  n\otimes\right)
}\left[  \mathfrak{Y}\right]  ,\mathfrak{Z}\right]  \overset{\simeq
}{\rightarrow}\mathit{M}_{\operatorname*{Ob}}^{\left(  n\otimes\right)
}\left[  \mathfrak{X},\mathit{M}_{i+1,\operatorname*{Ob}}^{\left(
n\otimes\right)  }\left[  \mathfrak{Y}\right]  ,\mathfrak{Z}\right]  ,
\end{equation}
are called $n$-\textit{ary associators} being $\left(  2n-1\right)  $-place
natural transformations, where $\mathrm{A}_{\operatorname*{Mor}}^{\left(
2n-1\right)  \otimes}$ may be viewed as corresponding diagonals as in
(\ref{x-com}). Here $i=1,\ldots,n-1$ is the place of the internal brackets.
\end{definition}

In the ternary case ($n=3$) we have $2=3-1$ pairs of the ternary associators%
\begin{equation}
\mathrm{A}_{1,\operatorname*{Ob}}^{\left(  5\otimes\right)  }:\mathit{M}%
_{\operatorname*{Ob}}^{\left(  3\otimes\right)  }\left[  \mathit{M}%
_{\operatorname*{Ob}}^{\left(  3\otimes\right)  }\left[  X_{1},X_{2}%
,X_{3}\right]  ,X_{4},X_{5}\right]  \overset{\simeq}{\rightarrow}%
\mathit{M}_{\operatorname*{Ob}}^{\left(  3\otimes\right)  }\left[
X_{1},\mathit{M}_{\operatorname*{Ob}}^{\left(  3\otimes\right)  }\left[
X_{2},X_{3},X_{4}\right]  ,X_{5}\right]  , \label{a51}%
\end{equation}
and%
\begin{equation}
\mathrm{A}_{2,\operatorname*{Ob}}^{\left(  5\otimes\right)  }:\mathit{M}%
_{\operatorname*{Ob}}^{\left(  3\otimes\right)  }\left[  X_{1},\mathit{M}%
_{\operatorname*{Ob}}^{\left(  3\otimes\right)  }\left[  X_{2},X_{3}%
,X_{4}\right]  ,X_{5}\right]  \overset{\simeq}{\rightarrow}\mathit{M}%
_{\operatorname*{Ob}}^{\left(  3\otimes\right)  }\left[  X_{1},X_{2}%
,\mathit{M}_{\operatorname*{Ob}}^{\left(  3\otimes\right)  }\left[
X_{3},X_{4},X_{5}\right]  \right]  . \label{a52}%
\end{equation}

It is now definite that different ways of inserting parentheses in a product
of $N$ objects will give different results (the same will be true for
morphisms as well), if we do not impose constraints on the associators. We
anticipate that we will need (as in the binary case (\ref{mob})) only one more
(i.e. three) tensor multiplication than appears in the associativity
conditions (\ref{me1}) to make a commutative diagram for the following
isomorphism of $3\cdot\left(  n-1\right)  +1=3n-2$ objects%
\begin{align}
&  \mathit{M}_{\operatorname*{Ob}}^{\left(  n\otimes\right)  }\left[
\mathit{M}_{\operatorname*{Ob}}^{\left(  n\otimes\right)  }\left[
\mathit{M}_{\operatorname*{Ob}}^{\left(  n\otimes\right)  }\left[
X_{1},\ldots,X_{n}\right]  ,X_{n+1},\ldots,X_{2n-1}\right]  ,X_{2n}%
,\ldots,X_{3n-2}\right]  \nonumber\\
&  \overset{\simeq}{\rightarrow}\mathit{M}_{\operatorname*{Ob}}^{\left(
n\otimes\right)  }\left[  X_{1},\ldots,X_{n-1},\mathit{M}_{\operatorname*{Ob}%
}^{\left(  n\otimes\right)  }\left[  X_{n},\ldots,X_{2n-2},\mathit{M}%
_{\operatorname*{Ob}}^{\left(  n\otimes\right)  }\left[  X_{2n-1}%
,\ldots,X_{3n-2}\right]  \right]  \right]  .\label{mobn}%
\end{align}

\begin{conjecture}
[$N$-\textsf{ary coherence}]If the $n$-ary associator $\mathrm{A}^{\left(
2n-1\right)  \otimes}$ satisfies such $n$-\textit{ary coherence conditions}
that the isomorphism (\ref{mobn}) takes place, then any diagram containing
$\mathrm{A}^{\left(  2n-1\right)  \otimes}$ together with the identities
(\ref{idn}) commutes.
\end{conjecture}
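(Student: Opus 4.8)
The plan is to run the classical Mac Lane coherence argument with binary trees replaced by $n$-ary ones and the pentagon replaced by the $\left(  n^{2}+1\right)  $-gon encoded in (\ref{mobn}). First I would recast the problem combinatorially: a legal way of inserting parentheses into an admissible product of objects $X_{1},\ldots,X_{N}$ (size $N=k\left(  n-1\right)  +1$, $k$ tensor multiplications) is a planar rooted tree each of whose internal vertices has exactly $n$ children, with leaves labelled by $X_{1},\ldots,X_{N}$ in order. The $n-1$ associators $\mathrm{A}_{i,\operatorname*{Ob}}^{\left(  2n-1\right)  \otimes}$ are the elementary local rewrites that slide an inner $n$-ary bracket from slot $i$ to slot $i+1$ of an outer bracket; every object occurring in a diagram built from the associators and the identities (\ref{idn}) is such a tree, and every arrow is a composite of these rewrites, their inverses, and $n$-ary tensorings (via $\mathit{M}_{\operatorname*{Mor}}^{\left(  n\otimes\right)  }$) of such arrows with identities. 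Since the associators are isomorphisms, the conjecture is equivalent to the assertion that the oriented (rightward) rewriting system on $n$-ary trees is confluent \emph{and} that the two $\mathsf{f}$-legs of every resulting object square represent the same morphism.

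Then I would exhibit a normal form --- the fully right-nested tree (the \textquotedblleft right comb\textquotedblright), which is the unique form to which no rightward associator applies --- and prove termination: attach to each tree a nonnegative integer weight (for instance $\sum_{v}(\text{number of leaves below the first child of }v)$ summed over internal vertices $v$, or a lexicographic variant) which strictly decreases under each $\mathrm{A}_{i}$, so that every tree reaches the right comb in finitely many steps. Termination together with Newman's lemma reduces the whole statement to \emph{local} confluence of the object-level rewriting, because the morphism half of every square then closes automatically from the fact that each $\mathrm{A}^{\left(  2n-1\right)  \otimes}$ is a natural transformation (its $\mathsf{f}$-component being the diagonal determined by its two legs, exactly as in (\ref{x-com})), combined with the $n$-ary mediality/interchange law (cf. (\ref{mna})) governing composition of $\mathit{M}_{\operatorname*{Mor}}^{\left(  n\otimes\right)  }$.

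The crux is the classification and resolution of critical pairs. If two applicable associators $\mathrm{A}_{i}$, $\mathrm{A}_{j}$ act on disjoint or properly nested, non-interacting regions of a tree $T$, the square closes at once from bifunctoriality of $\mathit{M}^{\left(  n\otimes\right)  }$ and commutation of identities. The substantive configurations are the overlapping ones, in which one associator moves a bracket into or across the slot acted on by the other (the analogue of the single binary overlap on $\left(  \left(  X_{1},X_{2}\right)  ,X_{3}\right)  ,X_{4}$); I would show that each such overlap is one face of an instance of the big diagram (\ref{mobn}) on $3n-2$ objects, i.e.\ is resolved precisely by the $\left(  n^{2}+1\right)  $-gon coherence relation posited in the hypothesis. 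Once local confluence is obtained, global confluence follows, and any two parallel formal arrows $T\to T^{\prime}$ agree because, composed with the canonical arrow $T^{\prime}\to(\text{right comb})$, both equal the unique canonical arrow $T\to(\text{right comb})$; an equivalent packaging is to construct the free $n$-ary (semigroupal) category on $\mathcal{C}$ and show that each of its hom-sets is a singleton.

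The main obstacle I anticipate is exactly this critical-pair analysis for general $n$: whereas $n=2$ has one overlap type (handled by the pentagon), the number of genuinely distinct overlap patterns grows with $n$ --- indexed by which of the $n-1$ slots and which levels of nesting are involved, and by the ordering of the iterated inner tensor products in (\ref{mobn}) --- and one must verify that \emph{no} coherence relation beyond the single $\left(  n^{2}+1\right)  $-gon is forced, equivalently that this polygon together with bifunctoriality and naturality generates all relations. Keeping precise track of slot indices and nesting depths inside (\ref{mobn}) is the delicate part, which is presumably why the statement is offered here as a conjecture; turning that bookkeeping into a clean word-problem argument for the free category is where the real work remains.
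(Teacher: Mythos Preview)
The paper does not prove this statement at all: it is explicitly labelled a \textbf{Conjecture}, and immediately after stating it the paper only remarks that ``the $n$-ary coherence conditions are described by a `$(n^{2}+1)$-gon','' then moves on to \textbf{Definition~\ref{def-nonstrscat}} without any argument. So there is no proof in the paper to compare your proposal against.

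Your outline is a sensible adaptation of Mac Lane's rewriting/normal-form argument to $n$-ary trees, and you correctly identify the crux: classifying the overlapping critical pairs and showing that each is resolved by (an instance of) the single $(n^{2}+1)$-gon. But note that this is precisely the point the paper declines to settle, and your own final paragraph concedes the same gap --- you have not actually carried out the critical-pair analysis, only asserted that it should go through. In particular, the claim that \emph{one} polygon on $3n-2$ objects suffices (rather than a family of relations, one for each overlap pattern as $n$ grows) is exactly what needs to be checked, and nothing in your sketch establishes it. So what you have written is a plausible strategy, not a proof; it neither confirms nor refutes the conjecture, which remains open in the paper as well.
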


The $n$-ary coherence conditions are described by a \textquotedblleft$\left(
n^{2}+1\right)  $-gon\textquotedblright, which is the pentagon (\ref{diag1})
for $n=2$\ (for classification of \textquotedblleft$N$-gons\textquotedblright%
\ see, e.g., \cite{wenninger}).

\begin{definition}
\label{def-nonstrscat}A category $\left(  \mathcal{C},\mathit{M}^{\left(
n\otimes\right)  }\right)  $ is called a \textit{polyadic }($n$-\textit{ary})
\textit{non-strict} \textit{semigroupal} \textit{category} $\mathtt{sSGCat}%
_{n}$, if the bifunctor $\mathit{M}^{\left(  n\otimes\right)  }$ satisfies the
$n$-ary coherence.
\end{definition}

\begin{example}
\label{exam-deca}In the ternary case $n=3$ we have $2$ pairs of $5$-place
associators (\ref{a51})--(\ref{a52}) $\mathrm{A}_{1}^{\left(  5\otimes\right)
}$ and $\mathrm{A}_{2}^{\left(  5\otimes\right)  }$ which act on $7=3\cdot3-2$
objects (\ref{mobn}). We consider the diagram for objects only, then the
associativity constraint for the associators $\mathrm{A}_{1,\operatorname*{Ob}%
}^{\left(  5\otimes\right)  }$ and $\mathrm{A}_{2,\operatorname*{Ob}}^{\left(
5\otimes\right)  }$ will be a \textit{decagon axiom} requiring that the
diagram

\begin{equation}
\xymatrix@R+3mm@C-20mm@L+2mm{
      && \left[ X_{1}, \left[X_{2},\left[X_{3},X_{4},  X_{5} \right] , X_{6} \right] ,X_{7} \right]
      \ar[drr]^(.7){\mathrm{A}_{2,\operatorname*{Ob}1,2,345,6,7}^{\left(  5\otimes\right) }}
      \\
      \left[ \left[X_{1}, X_{2},\left[X_{3},X_{4},  X_{5} \right]\right] , X_{6} ,X_{7}  \right]
      \ar[urr]^(.3){\mathrm{A}_{1,\operatorname*{Ob}1,2,345,6,7}^{\left( 5\otimes\right) }}
      &&&& 
      \left[ X_{1}, X_{2},\left[\left[ X_{3},X_{4}, X_{5}\right] , X_{6} ,X_{7} \right] \right]
      \ar[d]^{\operatorname*{id}\nolimits_{X_{1}}\otimes\operatorname*{id}\nolimits_{X_{2}}\otimes
      \mathrm{A}_{1,\operatorname*{Ob}3,4,5,6,7}^{\left(  5\otimes\right) }}
      \\
     \left[\left[ X_{1},\left[ X_{2},X_{3},X_{4}\right] ,  X_{5}\right] , X_{6} ,X_{7} \right]
      \ar[u]^{\mathrm{A}_{2,\operatorname*{Ob}1,2,3,4,5}^{\left(  5\otimes\right) }\otimes
      \operatorname*{id}\nolimits_{X_{6}}\otimes\operatorname*{id}\nolimits_{X_{7}}} &&&&
     \left[ X_{1}, X_{2},\left[X_{3},\left[ X_{4}, X_{5} , X_{6}\right] ,X_{7} \right] \right]
      \ar[d]^{\operatorname*{id}\nolimits_{X_{1}}\otimes\operatorname*{id}\nolimits_{X_{2}}\otimes
      \mathrm{A}_{2,\operatorname*{Ob}3,4,5,6,7}^{\left(  5\otimes\right) }}
      \\
    \left[  \left[ \left[  X_{1},X_{2} ,X_{3}\right]  ,X_{4},X_{5}\right]  ,X_{6} ,X_{7}\right]
      \ar[u]^{\mathrm{A}_{1,\operatorname*{Ob}1,2,3,4,5}^{\left(  5\otimes\right) }\otimes
      \operatorname*{id}\nolimits_{X_{6}}\otimes\operatorname*{id}\nolimits_{X_{7}}}
      \ar[d]_{\mathrm{A}_{1,\operatorname*{Ob}123,4,5,6,7}^{\left(  5\otimes\right) }}
      \ar[rrrr]^{\simeq}_{(\ref{mobn})}
     &&&&
     \left[ X_{1}, X_{2},\left[X_{3}, X_{4},\left[ X_{5} , X_{6}  ,X_{7} \right]\right]\right]
      \\
      \left[\left[ X_{1}, X_{2},X_{3}\right],\left[ X_{4},  X_{5} , X_{6} \right],X_{7}  \right] 
      \ar[drr]_(.3){\mathrm{A}_{2,\operatorname*{Ob}123,4,5,6,7}^{\left(  5\otimes\right) }}
      &&&&
      \left[ X_{1},\left[ X_{2}, X_{3},X_{4}\right],\left[  X_{5} , X_{6} ,X_{7} \right] \right]
      \ar[u]_{\mathrm{A}_{2,\operatorname*{Ob}1,2,3,4,567}^{\left(  5\otimes\right) }}
      \\
      &&
      \left[ \left[ X_{1},X_{2} X_{3}\right],X_{4},\left[  X_{5} , X_{6} ,X_{7} \right] \right]
      \ar[rru]_(.7){\mathrm{A}_{1,\operatorname*{Ob}1,2,3,4,567}^{\left(  5\otimes\right) }}
            } \label{diag4}
\end{equation} commutes (cf. (\ref{diag4}) and the pentagon axiom
(\ref{diag1}) for binary non-strict tensor categories).
\end{example}

\section{$N$\textsc{-ary units, unitors and quertors}}

Introducing $n$-ary analogs of units and unitors is non-trivial, because in
$n$-ary structures there are various possibilities: one unit, many units, all
elements are units or there are no units at all (see, e.g., for $n$-ary groups
\cite{dor3,pos,galmak1}, and for $n$-ary monoids \cite{pop/pop}). A similar
situation is expected in category theory after proper categorification
\cite{cra/fre,cra/yet,bae/dol98} of $n$-ary structures.

\subsection{Polyadic monoidal categories\label{subsec-nmon}}

Let $\left(  \mathcal{C},\mathit{M}^{\left(  n\otimes\right)  },\mathrm{A}%
^{\left(  2n-1\right)  \otimes}\right)  $ be an $n$-ary non-strict semigroupal
category $\mathtt{SGCat}_{n}$ (see \textbf{Definition \ref{def-semicat}}) with
$n$-ary tensor product $\mathit{M}^{\left(  n\otimes\right)  }$ and the
associator $\mathrm{A}^{\left(  2n-1\right)  \otimes}$ satisfying $n$-ary
coherence. If a category has a \textit{unit neutral sequence} of objects
$\mathfrak{E}_{\left(  n-1\right)  }=\left(  E_{1},\ldots,E_{i}\right)  $,
$E_{i}\in\operatorname*{Ob}\mathcal{C},$ $i=1,\ldots,n-1$, we call it a
\textit{unital} category. Note that the unit neutral sequence may not be
unique. If all $E_{i}$ coincide $E_{i}=E\in\operatorname*{Ob}\mathcal{C}$,
then $E$ is called a \textit{unit object} of $\mathcal{C}$. The $n$-ary
\textit{unitors }$\mathrm{U}_{\left(  i\right)  }^{\left(  n\otimes\right)  }%
$, $i=1,\ldots,n$ ($n$-ary \textquotedblleft unit morphisms\textquotedblright%
\ being natural transformations) are defined by%
\begin{equation}
\mathrm{U}_{\left(  i\right)  \operatorname*{Ob}}^{\left(  n\otimes\right)
}:\mathit{M}_{\operatorname*{Ob}}^{\left(  n\otimes\right)  }\left[
E_{1},\ldots E_{i-1},X,E_{i+1},\ldots E_{n}\right]  \overset{\simeq
}{\rightarrow}X,\ \ \ \forall X,E_{i}\in\operatorname*{Ob}\mathcal{C}%
,i=1,\ldots,n-1.
\end{equation}

The $n$-ary unitors $\mathrm{U}_{\left(  i\right)  }^{\left(  n\otimes\right)
}$ are compatible with the $n$-ary associators $\mathrm{A}^{\left(
2n-1\right)  \otimes}$ by the analog of the triangle axiom (\ref{diag2}). In
the binary case (\ref{lr1})--(\ref{lr2}), we have $\mathrm{U}_{\left(
1\right)  }^{\left(  2\otimes\right)  }=\mathrm{R}^{\left(  2\otimes\right)
}$, $\mathrm{U}_{\left(  2\right)  }^{\left(  2\otimes\right)  }%
=\mathrm{L}^{\left(  2\otimes\right)  }$.

\begin{definition}
\label{def-nmon}A polyadic ($n$\textit{-ary})\textit{ non-strict monoidal
category} $\mathtt{MonCat}_{n}$ is a polyadic ($n$-ary) non-strict semigroupal
category $\mathtt{SGCat}_{n}$ endowed with a unit neutral sequence
$\mathfrak{E}_{\left(  n-1\right)  }$ and $n$ unitors\textit{ }$\mathrm{U}%
_{\left(  i\right)  }^{\left(  n\otimes\right)  }$, $i=1,\ldots,n$, that is a
5-tuple $\left(  \mathcal{C},\mathit{M}^{\left(  n\otimes\right)  }%
,\mathrm{A}^{\left(  n\otimes\right)  },\mathfrak{E}_{\left(  n-1\right)
},\mathrm{U}^{\left(  n\otimes\right)  }\right)  $ satisfying the
\textquotedblleft$\left(  n^{2}+1\right)  $-gon\textquotedblright\ axiom for
the $\left(  n-1\right)  $ associators $\mathrm{A}_{\left(  i\right)
}^{\left(  2n-1\right)  \otimes}$ and the triangle axiom (the analog of
(\ref{diag2})) for the unitors and associators compatibility condition.
\end{definition}

\begin{example}
\label{exam-mon3}If we consider the ternary non-strict monoidal category
$\mathtt{MonCat}_{3}$ with one unit object $E\in\operatorname*{Ob}\mathcal{C}%
$, then we have $2$ associators $\mathrm{A}_{1}^{\left(  5\otimes\right)  }$
and $\mathrm{A}_{2}^{\left(  5\otimes\right)  }$ satisfying the decagon axiom
(\ref{diag4}) and $3$ unitors%
\begin{align}
\mathrm{U}_{\left(  1\right)  \operatorname*{Ob}}^{\left(  3\otimes\right)  }
&  :\mathit{M}_{\operatorname*{Ob}}^{\left(  3\otimes\right)  }\left[
X,E,E\right]  \overset{\simeq}{\rightarrow}X,\\
\mathrm{U}_{\left(  2\right)  \operatorname*{Ob}}^{\left(  3\otimes\right)  }
&  :\mathit{M}_{\operatorname*{Ob}}^{\left(  3\otimes\right)  }\left[
E,X,E\right]  \overset{\simeq}{\rightarrow}X,\\
\mathrm{U}_{\left(  3\right)  \operatorname*{Ob}}^{\left(  3\otimes\right)  }
&  :\mathit{M}_{\operatorname*{Ob}}^{\left(  3\otimes\right)  }\left[
E,E,X\right]  \overset{\simeq}{\rightarrow}X,\ \ \ \forall X\in
\operatorname*{Ob}\mathcal{C},
\end{align}
which satisfy the \textquotedblleft normalizing\textquotedblright\ conditions
$\mathrm{U}_{\left(  i\right)  \operatorname*{Ob}}^{\left(  3\otimes\right)
}\left(  E\right)  =E$, $i=1,2,3$ and the ternary analog of the triangle axiom
(\ref{diag2}), such that the diagram
\begin{equation}
\xymatrix@R+10mm@C-5mm@L+2mm{
      &&  \left[E,\left[E,X,  E \right] , E \right] 
      \ar[drr]^(.7){\mathrm{A}_{2,\operatorname*{Ob}}^{\left(  5\otimes\right) }}
      \ar[dd]^(.73){\mathrm{U}_{\left(  2\right)  \operatorname*{Ob}E,EXE,E}^{\left(  3\otimes\right)  }}
      \\
      \left[\left[ E,E,X\right],  E  , E \right]
      \ar[urr]^(.3){\mathrm{A}_{1,\operatorname*{Ob}}^{\left( 5\otimes\right) }}
      \ar[d]_{\mathrm{U}_{\left(  1\right)  \operatorname*{Ob}EEX,E,E}^{\left(  3\otimes\right)  }}&&
            && 
      \left[E,E,\left[X,  E  , E \right]\right]
      \ar[d]^{\mathrm{U}_{\left(  3\right)  \operatorname*{Ob}E,E,XEE}^{\left(  3\otimes\right)  }}
      \\\left[E,  E,X \right]
           \ar[drr]_(.3){\mathrm{U}_{\left(  3\right)  \operatorname*{Ob}E,E,X}^{\left(  3\otimes\right)  }}
      && \left[E,X,  E \right] \ar[d]^(.35){\mathrm{U}_{\left(  2\right)  \operatorname*{Ob}E,X,E}^{\left(  3\otimes\right) }}
      &&
      \left[X,E,  E \right]
            \ar[dll]^(.4){\mathrm{U}_{\left(  1\right)  \operatorname*{Ob}X,E,E}^{\left(  3\otimes\right)  }}
      \\
      &&
      X
                  } \label{diag8}
\end{equation} 
commutes.
\end{example}

\subsection{Polyadic nonunital groupal categories\label{subsec-groupal}}

The main result of $n$-ary group theory \cite{dor3,pos} is connected with
units and neutral polyads: if they exist, then such $n$-ary group is reducible
to a binary group. A similar statement can be true in some sense for categories.

\begin{conjecture}
If a polyadic ($n$-ary) tensor category has unit object and unitors, it can be
arity-reducible to a binary category, such that the $n$-ary product can be
obtained by iterations of the binary tensor product.
\end{conjecture}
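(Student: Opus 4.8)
The plan is to transplant the classical reduction theorem for $n$-ary groups \cite{dor3,pos} — an $n$-ary group with a neutral element is the \textquotedblleft derived\textquotedblright\ group of a binary one via $a\cdot b=\mu_{n}\left[a,e,\ldots,e,b\right]$ — into the categorical setting. Assuming $\mathcal{C}$ carries a single unit object $E\in\operatorname*{Ob}\mathcal{C}$ (so the unit neutral sequence is $\mathfrak{E}_{\left(n-1\right)}=\left(E,\ldots,E\right)$) together with the $n$ unitors $\mathrm{U}_{\left(i\right)}^{\left(n\otimes\right)}$, I would first \emph{define} a binary tensor product on the same category by
\begin{equation}
X_{1}\otimes_{2}X_{2}:=\mathit{M}_{\operatorname*{Ob}}^{\left(n\otimes\right)}\left[X_{1},\overset{n-2}{\overbrace{E,\ldots,E}},X_{2}\right],\qquad\mathsf{f}_{1}\otimes_{2}\mathsf{f}_{2}:=\mathit{M}_{\operatorname*{Mor}}^{\left(n\otimes\right)}\left[\mathsf{f}_{1},\operatorname*{id}\nolimits_{E},\ldots,\operatorname*{id}\nolimits_{E},\mathsf{f}_{2}\right],
\end{equation}
and verify that $\otimes_{2}$, with object and morphism components $\mathit{M}_{\operatorname*{Ob}}^{\left(2\otimes\right)}$ and $\mathit{M}_{\operatorname*{Mor}}^{\left(2\otimes\right)}$, is a bifunctor: composition is inherited from the $n$-ary mediality law with $\operatorname*{id}_{E}$ in the inner slots, and the identity law is (\ref{idn}) restricted to those slots. (Any other placement of the padding $E$'s gives an isomorphic bifunctor through the unitors, so the construction is essentially canonical.)

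Second, I would manufacture a binary associator $\mathrm{A}^{\left(3\otimes\right)}$ for $\otimes_{2}$ from the $n$-ary data. Both bracketings of a triple $\otimes_{2}$-product unfold into $n$-ary expressions, e.g. $\left(X_{1}\otimes_{2}X_{2}\right)\otimes_{2}X_{3}=\mathit{M}_{\operatorname*{Ob}}^{\left(n\otimes\right)}\left[\mathit{M}_{\operatorname*{Ob}}^{\left(n\otimes\right)}\left[X_{1},E,\ldots,E,X_{2}\right],E,\ldots,E,X_{3}\right]$, and one moves from one to the other by a composite of the $\left(n-1\right)$ $n$-ary associators $\mathrm{A}_{\left(i\right)}^{\left(2n-1\right)\otimes}$ (sliding the inner bracket along the $E$-string) interleaved with unitors that collapse the extra $E$'s produced on the way; naturality of both families makes the composite a natural isomorphism. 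In the same manner $\mathrm{U}_{\left(1\right)}^{\left(n\otimes\right)}$ and $\mathrm{U}_{\left(n\right)}^{\left(n\otimes\right)}$ descend to binary unitors with unit object $E$. The recovery statement $\mathit{M}_{\operatorname*{Ob}}^{\left(n\otimes\right)}\left[X_{1},\ldots,X_{n}\right]\simeq\left(\cdots\left(X_{1}\otimes_{2}X_{2}\right)\otimes_{2}\cdots\right)\otimes_{2}X_{n}$ then follows by induction on $n$, each step peeling off the outermost factor with one $n$-ary associator and a short chain of unitors annihilating the freshly inserted $E$'s — precisely as a neutral polyad is absorbed in the $n$-ary group reduction.

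The substantive step, and the one I expect to be the main obstacle, is the \emph{coherence descent}: verifying that the $\mathrm{A}^{\left(3\otimes\right)}$ assembled above obeys the binary pentagon (\ref{diag1}) and the triangle (\ref{diag2}). The route is to expand the binary pentagon diagram into a large diagram each of whose edges is an $n$-ary associator $\mathrm{A}_{\left(i\right)}^{\left(2n-1\right)\otimes}$, an $n$-ary unitor, or an identity, and then appeal to $N$-ary coherence — the \textquotedblleft$\left(n^{2}+1\right)$-gon\textquotedblright\ axiom together with (\ref{idn}), which we are allowed to assume — and to the $n$-ary unitor/associator triangle, to conclude that the expanded diagram commutes; the binary pentagon is then one of its faces, and the binary triangle follows similarly with help of the unitor normalization $\mathrm{U}_{\left(i\right)\operatorname*{Ob}}^{\left(n\otimes\right)}\left(E\right)=E$ (the analog of (\ref{uu})). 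This is the categorical counterpart of the genuinely intricate bookkeeping in the Post / Gluskin--Hossz\'{u} reduction theorem, and essentially all the difficulty lies in controlling the many unit insertions while routing the diagram through the $n$-ary coherence polygon.

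Finally I would package the outcome as an equivalence rather than an equality: the $n$-ary non-strict monoidal category $\left(\mathcal{C},\mathit{M}^{\left(n\otimes\right)},\mathrm{A}^{\left(2n-1\right)\otimes},E,\mathrm{U}^{\left(n\otimes\right)}\right)$ of \textbf{Definition \ref{def-nmon}} is equivalent, as an $n$-ary monoidal category, to the arity-reduced category (\textbf{Definition \ref{def-magcat}}) generated by the binary monoidal data $\left(\mathcal{C},\otimes_{2},\mathrm{A}^{\left(3\otimes\right)},E,\mathrm{U}^{\left(2\otimes\right)}\right)$, the comparison $n$-ary functor being the identity on $\mathcal{C}$ and carrying $\mathit{M}_{\operatorname*{Ob}}^{\left(n\otimes\right)}\left[X_{1},\ldots,X_{n}\right]$ to its iterated binary form, with the unitor chains above supplying the coherence isomorphisms. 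Since $N$-ary coherence, on which both the construction of $\mathrm{A}^{\left(3\otimes\right)}$ and the descent lean, is itself only conjectural here, the whole reduction inherits that conditional status — which is presumably why the statement is posed as a conjecture.
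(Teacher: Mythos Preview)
The paper contains no proof of this statement: it is explicitly labeled a \textbf{Conjecture} and is left entirely open, motivated only by the one-sentence analogy with the D\"ornte--Post reduction theorem for $n$-ary groups (the paragraph immediately preceding the conjecture). So there is nothing to compare your argument against --- you have written a proof sketch for something the paper does not attempt to prove.

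That said, your outline is exactly the approach the paper's analogy invites: define $X_{1}\otimes_{2}X_{2}=\mathit{M}^{\left(n\otimes\right)}\left[X_{1},E,\ldots,E,X_{2}\right]$ and try to descend the $n$-ary coherence data to binary pentagon and triangle axioms. You have correctly identified the real difficulty --- the coherence descent --- and you are also right that it rests on the $N$-ary coherence conjecture (the \textquotedblleft$\left(n^{2}+1\right)$-gon\textquotedblright\ statement), which is itself unproven in the paper. Your closing remark that the reduction therefore inherits conjectural status is accurate and is presumably why the author did not pursue it. One small caution: the paper allows the unit neutral sequence $\mathfrak{E}_{\left(n-1\right)}=\left(E_{1},\ldots,E_{n-1}\right)$ to consist of distinct objects, not necessarily a single $E$, and does not assert uniqueness; your reduction assumes a single unit object, so strictly speaking you are treating a special case of the conjecture as stated.
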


Therefore, it would be worthwhile to introduce and study non-reducible
polyadic tensor categories which do not possess unit objects and unitors at
all. This can be done by \textquotedblleft categorification\textquotedblright%
\ of the \textit{querelement} concept \cite{dor3}. Recall that, for instance,
in a ternary group $\left\langle G\mid\mu_{3}\right\rangle $ for an element
$g\in G$ a querelement $\bar{g}$ is uniquely defined by $\mu_{3}\left[
g,g,\bar{g}\right]  =g$, which can be treated as a generalization of the
inverse element concept to the $n$-ary case. The mapping $g\rightarrow\bar{g}$
can be considered as an additional unary operation (\textit{queroperation}) in
the ternary (and $n$-ary) group, while viewing it as an abstract algebra
\cite{gle/gla} such that the notion of the identity is not used. The (binary)
category of $n$-ary groups and corresponding functors were considered in
\cite{mic1,mic84a,ian91}.

Let $\left(  \mathcal{C},\mathit{M}^{\left(  n\otimes\right)  },\mathrm{A}%
^{\left(  2n-1\right)  \otimes}\right)  $ be a polyadic ($n$-ary) non-strict
semigroupal category, where $\mathit{M}^{\left(  n\otimes\right)  }$ is the
$n$-ary tensor product, and $\mathrm{A}^{\left(  2n-1\right)  \otimes}$ is the
associator making the \textquotedblleft$\left(  n^{2}+1\right)  $%
-gon\textquotedblright\ diagram of $n$-ary coherence commutative. We propose a
\textquotedblleft categorification\textquotedblright\ analog of the
queroperation to be a covariant endofunctor of $\mathcal{C}$.

\begin{definition}
\label{def-q}A \textit{querfunctor} $\mathit{Q}:\mathcal{C}\rightarrow
\mathcal{C}$ is an endofunctor of $\mathcal{C}$ sending $\mathit{Q}%
_{\operatorname*{Ob}}\left(  X\right)  =\bar{X}$ and $\mathit{Q}%
_{\operatorname*{Mor}}\left(  \mathsf{f}\right)  =\mathsf{\bar{f}}$, where
$\bar{X}$ and $\mathsf{\bar{f}}$ are the \textit{querobject} and the
\textit{quermorphism} of $X$ and $\mathsf{f}$, respectively, such that the $i$
diagrams ($i=1,\ldots,n$) \begin{equation}
\xymatrix@R+5mm@C+7mm@L+2mm{
\left[\overset{n}{\overbrace {X,\ldots,X}}\right]
    \ar[drr]_{\mathit{P} r_{\operatorname*{Ob}}^{\left(  n\otimes \right)  }} 
    \ar[rrrr]^(.45){\overset{i-1}{\overbrace{\operatorname*{id}\nolimits_{X}
    \otimes,\ldots,\otimes\operatorname*{id}\nolimits_{X}}}
    \otimes{\mathit{Q}_{\operatorname*{Ob}}}\otimes\overset{n-i}{\overbrace
{\operatorname*{id}\nolimits_{X}\otimes,\ldots,\otimes\operatorname*{id}\nolimits_{X}}}}
      && && \left[
\overset{i-1}{\overbrace{X,\ldots,X}},\bar{X},\overset{n-i}{\overbrace
{X,\ldots,X}}\right]
            \ar[dll]^{\mathrm{Q}_{\left(  i\right)  \operatorname*{Ob}}^{\left(  n\otimes\right)  }} \\
      & & X } \label{diag6}
\end{equation} commute (and analogously for
morphisms), where $\mathrm{Q}_{\left(  i\right)  }^{\left(  n\otimes\right)
}$ are \textit{quertors}%
\begin{equation}
\mathrm{Q}_{\left(  i\right)  \operatorname*{Ob}}^{\left(  n\otimes\right)
}:\mathit{M}_{\operatorname*{Ob}}^{\left(  n\otimes\right)  }\left[
\overset{i-1}{\overbrace{X,\ldots,X}},\bar{X},\overset{n-i}{\overbrace
{X,\ldots,X}}\right]  \overset{\simeq}{\rightarrow}X,\ \ \ \forall
X\in\operatorname*{Ob}\mathcal{C},\ \ i=1,\ldots,n,
\end{equation}
and $\mathit{P}r^{\left(  n\otimes\right)  }:\mathcal{C}^{n\otimes}%
\rightarrow\mathcal{C}$ is the projection. The action on morphisms
$\mathrm{Q}_{\left(  i\right)  \operatorname*{Mor}}^{\left(  n\otimes\right)
}$ can be found using the diagonal arrow in the corresponding natural
transformation, as in (\ref{x-com}).
\end{definition}

\begin{example}
\label{exam-q}In the ternary case we have (for objects) the querfunctor
$\mathit{Q}_{\operatorname*{Ob}}\left(  X\right)  =\bar{X}$ and 3 quertor
isomorphisms%
\begin{align}
\mathrm{Q}_{\left(  1\right)  \operatorname*{Ob}}^{\left(  3\otimes\right)  }
&  :\mathit{M}_{\operatorname*{Ob}}^{\left(  3\otimes\right)  }\left[  \bar
{X},X,X\right]  \overset{\simeq}{\rightarrow}X,\label{qx1}\\
\mathrm{Q}_{\left(  2\right)  \operatorname*{Ob}}^{\left(  3\otimes\right)  }
&  :\mathit{M}_{\operatorname*{Ob}}^{\left(  3\otimes\right)  }\left[
X,\bar{X},X\right]  \overset{\simeq}{\rightarrow}X,\label{qx2}\\
\mathrm{Q}_{\left(  3\right)  \operatorname*{Ob}}^{\left(  3\otimes\right)  }
&  :\mathit{M}_{\operatorname*{Ob}}^{\left(  3\otimes\right)  }\left[
X,X,\bar{X}\right]  \overset{\simeq}{\rightarrow}X,\ \ \ \forall
X\in\operatorname*{Ob}\mathcal{C}. \label{qx3}%
\end{align}
The three quertors $\mathrm{Q}_{\left(  i\right)  \operatorname*{Ob}}^{\left(
3\otimes\right)  }$ and the querfunctor $\mathit{Q}$ are connected with two
ternary associators $\mathrm{A}_{1,\operatorname*{Ob}}^{\left(  5\otimes
\right)  },\mathrm{A}_{2,\operatorname*{Ob}}^{\left(  5\otimes\right)  }$
(\ref{a51})--(\ref{a52}) such that the following
diagram
\begin{equation}
\xymatrix@R+9mm@C+1mm@L+2mm{
&&\left[ X, X ,X\right]
\ar[dll]_{\mathit{D}iag^{\left(  3\otimes\right)  }
      \otimes\operatorname*{id} \otimes\operatorname*{id}}
\ar[drr]^{\operatorname*{id} \otimes\operatorname*{id}\otimes\mathit{D}iag^{\left(  3\otimes\right)  }}
 \ar[d]^(0.55){\operatorname*{id}\otimes\mathit{D}iag^{\left(  3\otimes\right)  }\otimes\operatorname*{id}}
\\\left[ \left[ X,X,X \right],X,X\right] 
    \ar[rr]^{\mathrm{A}_{1,\operatorname*{Ob}}^{\left(  5\otimes\right)  }}
\ar[d]_{\operatorname*{id} \otimes\operatorname*{id}\otimes\mathit{Q}_{\operatorname*{Ob}}
      \otimes\operatorname*{id} \otimes\operatorname*{id}}
&&\left[X, \left[ X,X,X \right],X\right]
      \ar[rr]^{\mathrm{A}_{2,\operatorname*{Ob}}^{\left(  5\otimes\right)  }}
      \ar[d]_{\operatorname*{id} \otimes\operatorname*{id}\otimes\mathit{Q}_{\operatorname*{Ob}}
      \otimes\operatorname*{id} \otimes\operatorname*{id}}
&&\left[X,X,\left[ X,X,X \right]\right]\ar[d]_{\operatorname*{id} \otimes\operatorname*{id}\otimes\mathit{Q}_{\operatorname*{Ob}}
      \otimes\operatorname*{id} \otimes\operatorname*{id}}
\\
    \left[ \left[ X,X,\bar{X} \right],X,X\right] 
    \ar[drr]_(.3){\mathrm{Q}_{\left(  3\right)  \operatorname*{Ob}}^{\left(  3\otimes\right)  }
    \otimes\operatorname*{id}\nolimits_{X}\otimes\operatorname*{id}\nolimits_{X}} 
    \ar[rr]^{\mathrm{A}_{1,\operatorname*{Ob}}^{\left(  5\otimes\right)  }}
      &&  \left[ X,\left[X,\bar{X}, X\right],X\right] 
      \ar[rr]^{\mathrm{A}_{2,\operatorname*{Ob}}^{\left(  5\otimes\right)  }}
      \ar[d]^(.4){\operatorname*{id}\nolimits_{X}\otimes
      \mathrm{Q}_{\left(  2\right)  \operatorname*{Ob}}^{\left(  3\otimes\right)  }\otimes\operatorname*{id}\nolimits_{X}}
      &&  \left[ X,X,\left[ \bar{X} ,X,X\right]\right] 
            \ar[dll]^(.3){\operatorname*{id}\nolimits_{X}\otimes\operatorname*{id}\nolimits_{X}\otimes
            \mathrm{Q}_{\left(  1\right)  \operatorname*{Ob}}^{\left(  3\otimes\right)  }} \\
      && \left[ X,X,X \right] }\label{diag7}
\end{equation} commutes, where $\mathit{D}iag^{\left(  n\otimes
\right)  }:\mathcal{C}\rightarrow\mathcal{C}^{n\otimes}$ is the diagonal.
\end{example}

\begin{definition}
\label{def-grcat}A\textit{ nonunital non-strict groupal category
}$\mathtt{GCat}_{n}$ is $\left(  \mathcal{C},\mathit{M}^{\left(
n\otimes\right)  },\mathrm{A}^{\left(  2n-1\right)  \otimes},\mathit{Q}%
,\mathrm{Q}^{\left(  n\otimes\right)  }\right)  $, i.e. a polyadic non-strict
semigroupal category $\mathtt{SGCat}_{n}$ equipped with the querfunctor
$\mathit{Q}$ and the quertors $\mathrm{Q}^{\left(  n\otimes\right)  }$
satisfying (\ref{diag6}).
\end{definition}

\begin{conjecture}
There exist polyadic nonunital non-strict groupal categories which are
arity-non\-reducible (see \textbf{Definition \ref{def-magcat}}), and so their
$n$-ary tensor product cannot be presented in the form of binary tensor
product iterations.
\end{conjecture}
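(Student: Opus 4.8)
The plan is to prove the conjecture by an explicit construction, carried out for every $n\geq3$: take an arity-nonreducible $n$-ary group as a seed and ``discretise'' it into a groupal category. First I would fix an \textbf{idempotent} $n$-ary group $\mathcal{G}=\left\langle G\mid\mu_{n}\right\rangle$ (so that $\mu_{n}\left[g,\ldots,g\right]=g$ for all $g$) which is \textbf{arity-nonreducible}, i.e.\ admits no binary operation $\star$ on $G$ with $\mu_{n}\left[g_{1},\ldots,g_{n}\right]=g_{1}\star\cdots\star g_{n}$; by the structure theory of $n$-ary groups \cite{dor3,pos,galmak1} this is equivalent to $\mathcal{G}$ possessing no neutral element. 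Such seeds exist: for odd $n$ one may take $G=\mathbb{Z}$ with the alternating sum $\mu_{n}\left[a_{1},\ldots,a_{n}\right]=a_{1}-a_{2}+a_{3}-\cdots+a_{n}$ (its querelement is $\bar{g}=g$, and inserting a fixed $e$ in any non-extreme slot fails to be neutral), and for arbitrary $n$ the affine $n$-ary group $\mu_{n}\left[a_{1},\ldots,a_{n}\right]=\sum_{i=1}^{n}r^{i-1}a_{i}$ on $\mathbb{Z}_{m}$, with $m$ a prime satisfying $m\equiv1\pmod{n-1}$ and $r$ a primitive $\left(n-1\right)$-th root of unity modulo $m$, so that idempotency and the absence of a neutral element are automatic (the smallest instance is $n=4$, $m=7$, $r=2$). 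Establishing the existence of these seeds uniformly in $n$ is a small number-theoretic lemma, not a deep one.

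Next I would take $\mathcal{C}$ to be the \textbf{discrete} category on the set $G$ (objects the elements of $G$, and only identity morphisms), and define the $n$-ary tensor product by $\mathit{M}_{\operatorname{Ob}}^{\left(n\otimes\right)}=\mu_{n}$ and $\mathit{M}_{\operatorname{Mor}}^{\left(n\otimes\right)}\left[\operatorname{id}_{g_{1}},\ldots,\operatorname{id}_{g_{n}}\right]=\operatorname{id}_{\mu_{n}\left[g_{1},\ldots,g_{n}\right]}$. Functoriality and the identity axiom~(\ref{idn}) are immediate, and the $n$-ary mediality condition on the $\mathsf{f}$-components holds automatically: in a discrete category every chain of composable morphisms collapses to a single identity, so both sides of the mediality relation reduce to the same identity morphism (this is exactly why no mediality hypothesis on $\mu_{n}$ itself is needed). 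Total $n$-ary associativity of $\mathit{M}_{\operatorname{Ob}}^{\left(n\otimes\right)}$ is the $n$-ary associativity of $\mu_{n}$, so $\left(\mathcal{C},\mathit{M}^{\left(n\otimes\right)}\right)$ is a polyadic strict (hence also non-strict, with identity associator $\mathrm{A}^{\left(2n-1\right)\otimes}$ trivially obeying the ``$\left(n^{2}+1\right)$-gon'' coherence) semigroupal category, hence in particular an $\mathtt{SGCat}_{n}$. I then define the \textbf{querfunctor} $\mathit{Q}$ of \textbf{Definition~\ref{def-q}} by $\mathit{Q}_{\operatorname{Ob}}\left(g\right)=\bar{g}$ (the unique querelement) and $\mathit{Q}_{\operatorname{Mor}}\left(\operatorname{id}_{g}\right)=\operatorname{id}_{\bar{g}}$; it is plainly a covariant endofunctor. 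D\"ornte's identities yield $\mu_{n}\left[\overbrace{g,\ldots,g}^{i-1},\bar{g},\overbrace{g,\ldots,g}^{n-i}\right]=g$ for every $i$, so the source of each quertor $\mathrm{Q}_{\left(i\right)\operatorname{Ob}}^{\left(n\otimes\right)}$ \emph{equals} $g$, the quertor is $\operatorname{id}_{g}$ (trivially an isomorphism), and, using idempotency, the diagram~(\ref{diag6}) collapses to identities and commutes. Hence $\left(\mathcal{C},\mathit{M}^{\left(n\otimes\right)},\mathrm{A}^{\left(2n-1\right)\otimes},\mathit{Q},\mathrm{Q}^{\left(n\otimes\right)}\right)$ is a nonunital non-strict groupal category $\mathtt{GCat}_{n}$ (\textbf{Definition~\ref{def-grcat}}); it is genuinely nonunital, since $\mathcal{G}$ has neither a neutral element (so $\mathcal{C}$ has no unit object) nor, as one checks directly for the chosen seeds, a unit neutral sequence.

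The real content is the last step, \textbf{arity-nonreducibility}. Suppose $\mathit{M}^{\left(n\otimes\right)}$ were an arity-reduced tensor product (\textbf{Definition~\ref{def-magcat}}), assembled by iterating some binary tensor product $\mathit{M}^{\prime\left(2\otimes\right)}$ on $\mathcal{C}$. Because $\mathcal{C}$ is discrete, all associators and unitors attached to $\mathit{M}^{\prime\left(2\otimes\right)}$ are identities, so $\mathit{M}^{\prime\left(2\otimes\right)}$ is strict and its object part is an associative binary operation $\star$ on $G$ with $\mu_{n}\left[g_{1},\ldots,g_{n}\right]=g_{1}\star g_{2}\star\cdots\star g_{n}$; thus $\left\langle G\mid\mu_{n}\right\rangle$ is a reducible $n$-ary group, which by \cite{dor3,pos,galmak1} forces the existence of a neutral element (the identity of $\star$, insertible in any slot of $\mu_{n}$), contradicting the choice of $\mathcal{G}$. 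Therefore $\mathit{M}^{\left(n\otimes\right)}$ is arity-nonreduced and $\mathcal{C}$ is an arity-nonreducible $\mathtt{GCat}_{n}$, proving the conjecture. I expect the main obstacle to be precisely this step: one must carefully match the paper's categorical notion of an ``arity-reduced tensor product'' (which a priori permits assembly from a \emph{non-strict} binary product carrying its own coherence data) with $n$-ary group reducibility, and it is working over a discrete base category that makes this identification clean, at the cost of a somewhat degenerate example. A natural follow-up, to be treated separately, is to upgrade the construction to a $\Bbbk$-linear or module-theoretic setting in order to exhibit a less degenerate arity-nonreducible groupal category.
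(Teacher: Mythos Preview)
The paper does not prove this statement: it is stated as a \emph{conjecture} and left open, so there is no ``paper's own proof'' to compare against. Your proposal therefore goes strictly beyond the paper by attempting an actual construction.

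Your approach---seeding a discrete category with an arity-nonreducible idempotent $n$-ary group---is sound in outline and the examples you give (alternating sum for odd $n$, affine $\mu_n[a_1,\ldots,a_n]=\sum r^{i-1}a_i$ on $\mathbb{Z}_m$) do work as claimed. Two points deserve tightening. First, the paper's diagram~(\ref{diag6}) is informally drawn: the top arrow is labelled with $\mathit{Q}_{\operatorname{Ob}}$ rather than a morphism, and the left ``projection'' $\mathit{P}r^{(n\otimes)}_{\operatorname{Ob}}$ is declared to be a functor $\mathcal{C}^{n\otimes}\to\mathcal{C}$ yet used as a morphism $[X,\ldots,X]\to X$ in $\mathcal{C}$. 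Your idempotency hypothesis is exactly what is needed to make that left arrow exist in a discrete category, but you should say explicitly that you are resolving an ambiguity in the paper's definition, not merely instantiating it. Second, in the nonreducibility step you write that $\mu_n=\star^{\,n-1}$ for associative $\star$ ``forces the existence of a neutral element (the identity of $\star$)''. This is true, but not quite immediate from the cited references in the form you need: the standard reducibility criterion assumes $\star$ is a \emph{group} operation, whereas a bifunctor on a discrete category gives only an associative magma. The missing (easy) step is that the D\"ornte-type identity $\mu_n[b,a,\ldots,a,\bar a]=b$, valid in any $n$-ary group, becomes $b\star a^{n-2}\star\bar a=b$, so $a^{n-2}\star\bar a$ is a right identity for $\star$; symmetrically one gets a left identity, hence a two-sided identity $e$, and then $e$ is the forbidden neutral element of $\mu_n$. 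With that gap filled your argument is complete, modulo the interpretive caveat you already flag about what ``arity-reduced'' is meant to allow.
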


\section{\textsc{Braided tensor categories}}

The next step in the investigation of binary tensor categories is
consideration of the tensor product \textquotedblleft
commutativity\textquotedblright\ property. The tensor product can be
\textquotedblleft commutative\textquotedblright\ such that for a tensor
category $\mathcal{C}$ there exists the equivalence $X\otimes Y=Y\otimes X$,
$\forall X,Y,\in\operatorname*{Ob}\mathcal{C}$, and such tensor categories are
called \textit{symmetric} \cite{maclane1}. By analogy with associativity, one
can introduce non-strict \textquotedblleft commutativity\textquotedblright,
which leads to the notion of a braided (binary) tensor category and the
corresponding coherence theorems \cite{joy/str1}. Various generalizations of
braiding were considered in \cite{gar/fra,dup/mar7,dup/mar2018c}, and their
higher versions are found, e.g., in \cite{kap/voe94,bat10,webs05}.

\subsection{Braided binary tensor categories}

Let $\left(  \mathcal{C},\mathit{M}^{\left(  2\otimes\right)  },\mathrm{A}%
^{\left(  3\otimes\right)  }\right)  $ be a non-strict semigroupal category
with the bifunctor $\mathit{M}^{\left(  2\otimes\right)  }$ and the associator
$\mathrm{A}^{\left(  3\otimes\right)  }$ (\ref{a1}) satisfying the pentagon
axiom (\ref{diag1}) \cite{yetter,boy2007}.

\begin{definition}
A (\textit{binary})\textit{ braiding }$\mathrm{B}^{\left(  2\otimes\right)
}=\left\{  \mathrm{B}_{\operatorname*{Ob}}^{\left(  2\otimes\right)
},\mathrm{B}_{\operatorname*{Mor}}^{\left(  2\otimes\right)  }\right\}  $ of a
semigroupal category $\mathtt{SGCat}_{2}$ is a natural transformation of the
bifunctor $\mathit{M}^{\left(  2\otimes\right)  }$ (bifunctorial isomorphism)
such that%
\begin{equation}
\mathrm{B}_{\operatorname*{Ob}}^{\left(  2\otimes\right)  }:\mathit{M}%
_{\operatorname*{Ob}}^{\left(  2\otimes\right)  }\left[  X_{1},X_{2}\right]
\overset{\simeq}{\rightarrow}\mathit{M}_{\operatorname*{Ob}}^{\left(
2\otimes\right)  }\left[  X_{2},X_{1}\right]  ,\ \ \ \forall X_{i}%
\in\operatorname*{Ob}\mathcal{C},i=1,2, \label{b1}%
\end{equation}
and the action on morphisms $\mathrm{B}_{\operatorname*{Mor}}^{\left(
2\otimes\right)  }$ may be interpreted as a diagonal, similarly to
(\ref{x-com}).
\end{definition}

\begin{definition}
A non-strict semigroupal category endowed with a binary braiding is called a
(\textit{binary})\textit{ braided semigroupal category} $\mathtt{bSGCat}_{2}$
$\left(  \mathcal{C},\mathit{M}^{\left(  2\otimes\right)  },\mathrm{A}%
^{\left(  3\otimes\right)  },\mathrm{B}^{\left(  2\otimes\right)  }\right)  $.
\end{definition}

The braiding $\mathrm{B}^{\left(  2\otimes\right)  }$ is connected with the
associator $\mathrm{A}^{\left(  3\otimes\right)  }$ by the \textit{hexagon
identity}
\begin{equation}
\xymatrix@R+3mm@C-5mm@L+1mm{
      &&  \left[\left[X_1,X_2 \right] , X_3 \right] 
      \ar[drr]^(.7){\mathrm{A}_{\operatorname*{Ob}1,2,3}^{\left(  3\otimes\right) }}
      \ar[dll]_(.7){\mathrm{B}_{\operatorname*{Ob}1,2}^{\left( 2\otimes\right) }
      \otimes\operatorname*{id}\nolimits_{X_{3}}}
      \\
      \left[\left[X_2,X_1 \right] , X_3 \right] 
      \ar[d]_{\mathrm{A}_{\operatorname*{Ob}2,1,3}^{\left(  3\otimes\right) }}&&&&            
     \left[X_1,\left[X_2 , X_3\right]  \right] 
      \ar[d]^{\mathrm{B}_{\left(  3\right)  \operatorname*{Ob}1,23}^{\left(  2\otimes\right)  }}
      \\\left[X_2,\left[ X_1, X_3  \right] \right] 
           \ar[drr]_(.3){\operatorname*{id}\nolimits_{X_{2}}\otimes
            \mathrm{B}_{\operatorname*{Ob}1,3}^{\left( 2\otimes\right) }}
      &&&&
      \left[\left[X_2,X_3 \right] , X_1 \right] 
            \ar[dll]^(.4){\mathrm{A}_{\operatorname*{Ob}2,3,1}^{\left(  3\otimes\right) }}
      \\
      &&
     \left[ X_2 , \left[ X_3,X_1\right] \right] 
                  } \label{diag9}
\end{equation} 
for objects, and similarly for the inverse associator.

\begin{definition}
A \textit{symmetric braided semigroupal category} $\mathtt{sbSGCat}_{2}$ has
the \textquotedblleft invertible\textquotedblright\ braiding%
\begin{align}
\mathrm{B}_{\operatorname*{Ob}X_{1},X_{2}}^{\left(  2\otimes\right)  }%
\circ\mathrm{B}_{\operatorname*{Ob}X_{2},X_{1}}^{\left(  2\otimes\right)  }
&  =\operatorname*{id}\nolimits_{X_{1}\otimes X_{2}}\ \ \ \ \ \text{or}%
\label{bb1}\\
\mathrm{B}_{\operatorname*{Ob}X_{2},X_{1}}^{\left(  2\otimes\right)  }  &
=\mathrm{B}_{\operatorname*{Ob}X_{1},X_{2}}^{\left(  2\otimes\right)
,-1},\ \ \ \forall X_{i}\in\operatorname*{Ob}\mathcal{C} \label{bb2}%
\end{align}

\end{definition}

A von Neumann regular generalization \cite{neu} (weakening) of (\ref{bb1})
leads to

\begin{definition}
A (\textit{von Neumann}) \textit{regular braided semigroupal category} is
defined by a braiding which satisfies \cite{dup/mar5,dup/mar2018c}%
\begin{equation}
\mathrm{B}_{\operatorname*{Ob}X_{1},X_{2}}^{\left(  2\otimes\right)  }%
\circ\mathrm{B}_{\operatorname*{Ob}X_{1},X_{2}}^{\ast\left(  2\otimes\right)
}\circ\mathrm{B}_{\operatorname*{Ob}X_{1},X_{2}}^{\left(  2\otimes\right)
}=\mathrm{B}_{\operatorname*{Ob}X_{1},X_{2}}^{\left(  2\otimes\right)  },
\end{equation}
where $\mathrm{B}_{\operatorname*{Ob}X_{1},X_{2}}^{\ast\left(  2\otimes
\right)  }$ is a generalized inverse \cite{pen1,nashed} of $\mathrm{B}%
_{\operatorname*{Ob}X_{1},X_{2}}^{\left(  2\otimes\right)  }$, and such that
$\mathrm{B}_{\operatorname*{Ob}X_{1},X_{2}}^{\ast\left(  2\otimes\right)
}\neq\mathrm{B}_{\operatorname*{Ob}X_{1},X_{2}}^{\left(  2\otimes\right)
,-1}$ (cf. (\ref{bb2})).
\end{definition}

\begin{proposition}
\label{prop-b-bin}If the (binary) braided semigroupal category is strict (the
associator becomes the equivalence (\ref{as1})--(\ref{as2}), and we can omit
internal brackets), then the diagram
\begin{equation}
\xymatrix@R+3mm@C+1mm@L+1mm{
      &&  \left[X_1,X_2  , X_3 \right] 
      \ar[drr]^(.7){\operatorname*{id}\nolimits_{X_{1}}\otimes
            \mathrm{B}_{\operatorname*{Ob}2,3}^{\left( 2\otimes\right) }}
      \ar[dll]_(.7){\mathrm{B}_{\operatorname*{Ob}1,2}^{\left( 2\otimes\right) }
      \otimes\operatorname*{id}\nolimits_{X_{3}}}
      \ar[ddll]^(.4){\mathrm{B}_{\operatorname*{Ob}1,23}^{\left( 2\otimes\right) }}
      \\
      \left[X_2,X_1  , X_3 \right] 
      \ar[d]_{\operatorname*{id}\nolimits_{X_{2}}\otimes
            \mathrm{B}_{\operatorname*{Ob}1,3}^{\left( 2\otimes\right) }}&&&&            
     \left[X_1,X_3 , X_2  \right] 
      \ar[d]^{\mathrm{B}_{\operatorname*{Ob}1,3}^{\left( 2\otimes\right) }
      \otimes\operatorname*{id}\nolimits_{X_{2}}}
      \ar[ddll]_(.4){\mathrm{B}_{\operatorname*{Ob}1,32}^{\left( 2\otimes\right) }}
      \\\left[X_2, X_3, X_1   \right] 
           \ar[drr]_(.3){\mathrm{B}_{\operatorname*{Ob}2,3}^{\left( 2\otimes\right) }
      \otimes\operatorname*{id}\nolimits_{X_{1}}}
      &&&&
      \left[X_3,X_1  , X_2 \right] 
            \ar[dll]^(.4){\operatorname*{id}\nolimits_{X_{3}}\otimes
            \mathrm{B}_{\operatorname*{Ob}1,2}^{\left( 2\otimes\right) }}
      \\
      &&
     \left[ X_3 ,  X_2,X_1 \right] 
                  } \label{diag11}
\end{equation} 
commutes
\cite{sta63,stasheff}.
\end{proposition}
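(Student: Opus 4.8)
The plan is to cut the hexagon (\ref{diag11}) along its two drawn diagonals into three commuting cells: two of them will be degenerate copies of the hexagon identity (\ref{diag9}) and the third will be a naturality square of the braiding. For brevity write $B_{ij}$ for $\mathrm{B}_{\operatorname*{Ob}X_{i},X_{j}}^{\left(2\otimes\right)}$ and $B_{i,jk}$ for $\mathrm{B}_{\operatorname*{Ob}X_{i},X_{j}\otimes X_{k}}^{\left(2\otimes\right)}$. Since by hypothesis the category is strict, the associator is the equivalence (\ref{as1})--(\ref{as2}), so throughout all internal brackets are erased and every $\mathrm{A}^{\left(3\otimes\right)}$-arrow is an identity; thus only (\ref{b1}), the functoriality/naturality of $\mathrm{B}^{\left(2\otimes\right)}$, and the hexagon (\ref{diag9}) are used.

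First I would specialise (\ref{diag9}). Replacing the three associators occurring in (\ref{diag9}) by identities collapses that hexagon to the triangle identity
\begin{equation}
\left(\operatorname*{id}\nolimits_{X_{j}}\otimes B_{ik}\right)\circ\left(B_{ij}\otimes\operatorname*{id}\nolimits_{X_{k}}\right)=B_{i,jk},
\end{equation}
valid for every ordered triple $\left(X_{i},X_{j},X_{k}\right)$ of objects of $\mathcal{C}$. Instantiating at $\left(X_{1},X_{2},X_{3}\right)$ gives commutativity of the upper-left triangle of (\ref{diag11}), the one with vertices $\left[X_{1},X_{2},X_{3}\right]$, $\left[X_{2},X_{1},X_{3}\right]$, $\left[X_{2},X_{3},X_{1}\right]$ and hypotenuse the diagonal labelled $B_{1,23}$; instantiating at $\left(X_{1},X_{3},X_{2}\right)$ gives commutativity of the lower-right triangle, the one with vertices $\left[X_{1},X_{3},X_{2}\right]$, $\left[X_{3},X_{1},X_{2}\right]$, $\left[X_{3},X_{2},X_{1}\right]$ and hypotenuse the diagonal labelled $B_{1,32}$.

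Next I would identify the remaining inner cell: the quadrilateral with vertices $\left[X_{1},X_{2},X_{3}\right]$, $\left[X_{1},X_{3},X_{2}\right]$, $\left[X_{3},X_{2},X_{1}\right]$, $\left[X_{2},X_{3},X_{1}\right]$ and sides $\operatorname*{id}\nolimits_{X_{1}}\otimes B_{23}$, $B_{1,32}$, $B_{23}\otimes\operatorname*{id}\nolimits_{X_{1}}$, $B_{1,23}$. This is precisely the naturality square of the bifunctorial isomorphism $\mathrm{B}^{\left(2\otimes\right)}$ (in the sense of (\ref{b1}) and (\ref{x-com})) evaluated on the morphism $\left(\operatorname*{id}\nolimits_{X_{1}},B_{23}\right):\left(X_{1},X_{2}\otimes X_{3}\right)\rightarrow\left(X_{1},X_{3}\otimes X_{2}\right)$ of $\mathcal{C}\times\mathcal{C}$, so it commutes by naturality of $\mathrm{B}^{\left(2\otimes\right)}$ alone, i.e.
\begin{equation}
B_{1,32}\circ\left(\operatorname*{id}\nolimits_{X_{1}}\otimes B_{23}\right)=\left(B_{23}\otimes\operatorname*{id}\nolimits_{X_{1}}\right)\circ B_{1,23}.
\end{equation}
Pasting the three cells then finishes: by the upper-left triangle the left-hand boundary path of (\ref{diag11}) equals $\left(B_{23}\otimes\operatorname*{id}\nolimits_{X_{1}}\right)\circ B_{1,23}$, by the lower-right triangle the right-hand boundary path equals $B_{1,32}\circ\left(\operatorname*{id}\nolimits_{X_{1}}\otimes B_{23}\right)$, and these two composites agree by the naturality square; hence (\ref{diag11}) commutes. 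The $\mathsf{f}$-component version is obtained verbatim on replacing objects by morphisms and reading $\mathrm{B}_{\operatorname*{Mor}}^{\left(2\otimes\right)}$ off the diagonals of the same naturality squares.

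The hard part is not any computation but the bookkeeping of identifications. I expect the only genuine care to be required in checking that strictness really degenerates (\ref{diag9}) to the triangle identity above with no residual associator and no orientation mismatch — in particular that the diagonals $B_{1,23}$ and $B_{1,32}$ drawn in (\ref{diag11}) are exactly the $B_{i,jk}$ produced by that degenerate hexagon — and in recognising the central quadrilateral as an instance of the naturality of $\mathrm{B}^{\left(2\otimes\right)}$, which requires nothing beyond its defining property (\ref{b1}). Once these matches are in place the commutativity is immediate, and the identity so obtained is exactly the categorical form of the Yang--Baxter relation recorded in \cite{sta63,stasheff}.
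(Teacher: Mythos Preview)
Your proof is correct and follows essentially the same approach as the paper: the paper's argument is simply that ``the triangles commute due to the hexagon identity and the internal rectangle commutes because the binary braiding is a natural transformation (bifunctorial isomorphism),'' which is precisely your decomposition into two degenerate hexagons and one naturality square. You have merely made explicit the bookkeeping that the paper leaves to the reader.
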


\begin{proof}
The triangles commute due to the hexagon identity (\ref{diag9}) and the
internal rectangle commutes, because the binary braiding $\mathrm{B}^{\left(
2\otimes\right)  }$ is a natural transformation (bifunctorial isomorphism).
\end{proof}

Omitting indices (\ref{diag11}) becomes the \textit{Yang-Baxter equation} in
terms of tensor products \cite{dri89} (or the \textit{binary braid group}
relation---for their difference see \cite{str95})%
\begin{equation}
\left(  {{\mathrm{B}_{\operatorname*{Ob}}^{\left(  2\otimes\right)  }%
\otimes\operatorname*{id}}}\right)  \circ\left(  \operatorname*{id}%
\otimes\mathrm{B}_{\operatorname*{Ob}}^{\left(  2\otimes\right)  }\right)
\circ\left(  {{\mathrm{B}_{\operatorname*{Ob}}^{\left(  2\otimes\right)
}\otimes\operatorname*{id}}}\right)  =\left(  \operatorname*{id}%
\otimes\mathrm{B}_{\operatorname*{Ob}}^{\left(  2\otimes\right)  }\right)
\circ\left(  {{\mathrm{B}_{\operatorname*{Ob}}^{\left(  2\otimes\right)
}\otimes\operatorname*{id}}}\right)  \circ\left(  \operatorname*{id}%
\otimes\mathrm{B}_{\operatorname*{Ob}}^{\left(  2\otimes\right)  }\right)  .
\label{yb2}%
\end{equation}

If the braided semigroupal category $\mathtt{bSGCat}_{2}$ contains a unit
object, then we have

\begin{definition}
A (\textit{binary}) \textit{braided monoidal category} $\mathtt{MonCat}_{2}$
$\left(  \mathcal{C},\mathit{M}^{\left(  2\otimes\right)  },\mathrm{A}%
^{\left(  3\otimes\right)  },E,\mathrm{U}^{\left(  2\otimes\right)
},\mathrm{B}^{\left(  2\otimes\right)  }\right)  $ is $\mathtt{bSGCat}_{2}$
together with a unit object $E\in\operatorname*{Ob}\mathcal{C}$ satisfying the
triangle axiom (\ref{diag2}) and a unitor $\mathrm{U}^{\left(  2\otimes
\right)  }$ (\ref{lr1})--(\ref{lr2}) the compatibility condition with the
braiding $\mathrm{B}^{\left(  2\otimes\right)  }$ such that the diagram (for objects)\begin{equation}
\xymatrix@R+5mm@C+10mm{
    \left[ X, E \right] 
    \ar[dr]_{\mathrm{U}_{\left(  1\right)  \operatorname*{Ob}}^{\left(  2\otimes\right)  }} 
    \ar[rr]^{\mathrm{B}_{\operatorname*{Ob}}^{\left(  2\otimes\right)  }}
      &&  \left[ E, X\right] 
            \ar[dl]^{\mathrm{U}_{\left(  2\right)  \operatorname*{Ob}}^{\left(  2\otimes\right)  }} \\
      &  X }
      \label{diag10}
\end{equation} commutes.
\end{definition}

For more details on binary braided monoidal categories, see
\cite{fre/yet,joy/str1} and for review, see, e.g.,
\cite{cha/pre,eti/gel/nik/ost,bul/cae/pen/pys}.

\subsection{Braided polyadic tensor categories}

Higher braidings for binary tensor categories were considered (from an
$n$-category viewpoint) in \cite{man/sch,kap/voe94}. We will discuss them for
polyadic categories, defined above in \textbf{Section \ref{sec-ntensor}}. The
difference will be clearer if a polyadic category is not arity-reduced (see
\textbf{Definition \ref{def-magcat}}) and for non-unital groupal categories
(\textbf{Subsection }\ref{subsec-groupal}).

Let $\left(  \mathcal{C},\mathit{M}^{\left(  n\otimes\right)  },\mathrm{A}%
^{\left(  2n-1\right)  \otimes}\right)  $ be a polyadic non-strict semigroupal
category, where $\mathit{M}^{\left(  n\otimes\right)  }$ is a (not arity
reduced) $n$-ary tensor product ($n$-ary functor) and $\mathrm{A}^{\left(
2n-1\right)  \otimes}$ is an associator, i.e. $n-1$ different $\left(
2n-1\right)  $-ary natural transformations (see \textbf{Definition
\ref{def-semicat}}). Now the braiding becomes an $n$-ary natural
transformation, which leads to any of $n$ permutations from the symmetry
(permutation) group $S_{n}$, rather than one possibility only, as for the
binary braiding (\ref{b1}). Note that in the consideration of higher braidings
\cite{man/sch,kap/voe94} one (\textquotedblleft order
reversing\textquotedblright) element of $S_{n}$ was used $\sigma_{n}^{\left(
rev\right)  }\equiv\left(
\begin{array}
[c]{cccc}%
1 & 2 & \ldots & n\\
n & n-1 & \ldots & 1
\end{array}
\right)  \in S_{n}$. Thus, we arrive at the most general

\begin{definition}
\label{def-n-braid}An $n$-\textit{ary braiding} $\mathrm{B}^{n\otimes
}=\left\{  \mathrm{B}_{\operatorname*{Ob}}^{\left(  n\otimes\right)
},\mathrm{B}_{\operatorname*{Mor}}^{\left(  n\otimes\right)  }\right\}  $ of a
polyadic non-strict semigroupal category is an $n$-ary natural (or
infra-natural) transformation%
\begin{equation}
\mathrm{B}_{\operatorname*{Ob}}^{\left(  n\otimes\right)  }:\mathit{M}%
_{\operatorname*{Ob}}^{\left(  n\otimes\right)  }\left[  \mathfrak{X}\right]
\overset{\simeq}{\rightarrow}\mathit{M}_{\operatorname*{Ob}}^{\left(
n\otimes\right)  }\left[  \sigma_{n}\circ\mathfrak{X}\right]  , \label{bmm1}%
\end{equation}
where $\mathfrak{X}$ is an $X$-polyad (see \textbf{Definition \ref{def-assoc}%
}) of the necessary length (which is $n$ here), and $\sigma_{n}\in S_{n}$ are
permutations that may satisfy some consistency conditions. The action on
morphisms $\mathrm{B}_{\operatorname*{Mor}}^{\left(  n\otimes\right)  }$ may
be found from the corresponding diagonal of the natural transformation square
(cf. (\ref{x-com})).
\end{definition}

The binary non-mixed (standard) braiding (\ref{b1}) has $\sigma_{2}=\sigma
_{2}^{\left(  rev\right)  }=\left(
\begin{array}
[c]{cc}%
1 & 2\\
2 & 1
\end{array}
\right)  \in S_{2}$.

\begin{definition}
A polyadic (non-strict) semigroupal category endowed with the $n$-ary braiding
$\left(  \mathcal{C},\mathit{M}^{\left(  n\otimes\right)  },\mathrm{A}%
^{\left(  2n-1\right)  \otimes},\mathrm{B}^{\left(  n\otimes\right)  }\right)
$ is called a \textit{braided semigroupal polyadic category} $\mathtt{bSGCat}%
_{n}$.
\end{definition}

The $n$-ary braiding $\mathrm{B}^{\left(  n\otimes\right)  }$ is connected
with the associator $\mathrm{A}^{\left(  2n-1\right)  \otimes}$ by a polyadic
analog of the hexagon identity (\ref{diag9}).

\begin{example}
In the case $n=3$, the braided non-strict semigroupal ternary category
$\mathtt{bSGCat}_{3}$ contains two associators $\mathrm{A}_{1}^{\left(
5\otimes\right)  }$ and $\mathrm{A}_{2}^{\left(  5\otimes\right)  }$ (see
\textit{Example }\ref{exam-deca}) satisfying the decagon axiom (\ref{diag4}).
Let us take for the ternary braiding $\mathrm{B}^{\left(  3\otimes\right)  }$
its \textquotedblleft order reversing\textquotedblright\ version%
\begin{equation}
\mathrm{B}_{\operatorname*{Ob}}^{\left(  3\otimes\right)  }:\mathit{M}%
_{\operatorname*{Ob}}^{\left(  2\otimes\right)  }\left[  X_{1},X_{2}%
,X_{3}\right]  \overset{\simeq}{\rightarrow}\mathit{M}_{\operatorname*{Ob}%
}^{\left(  2\otimes\right)  }\left[  X_{3},X_{2},X_{1}\right]  ,\ \ \ \forall
X_{i}\in\operatorname*{Ob}\mathcal{C},\ \ i=1,2,3. \label{br1}%
\end{equation}

Then the ternary analog of the hexagon identity is the \textit{decagon
identity} such that the diagram
\begin{equation}
\xymatrix@R+3mm@C-12mm@L+2mm{
      &&  \left[\left[X_1,X_2,X_3 \right] , X_4,X_5 \right] 
      \ar[drr]^(.7){\mathrm{A}_{1,\operatorname*{Ob}1,2,3,4,5}^{\left(  5\otimes\right) }}
      \ar[dll]_(.7){\mathrm{B}_{\operatorname*{Ob}1,2,3}^{\left( 3\otimes\right) }
      \otimes\operatorname*{id}\nolimits_{X_{4}} \otimes\operatorname*{id}\nolimits_{X_{5}} }
      \\
      \left[\left[ X_3,X_2,X_1 \right],X_4 , X_5 \right] 
      \ar[d]_{\mathrm{A}_{1,\operatorname*{Ob}3,2,1,4,5}^{\left(  5\otimes\right) }}
      &&&&            
     \left[X_1,\left[X_2 , X_3,X_4\right],X_5  \right] 
      \ar[d]^{\mathrm{A}_{2,\operatorname*{Ob}1,2,3,4,5}^{\left(  5\otimes\right) }}
      \\ \left[X_3,\left[X_2 , X_1,X_4\right],X_5  \right]
      \ar[d]_{\operatorname*{id}\nolimits_{X_{3}}\otimes\mathrm{B}_{\operatorname*{Ob}2,1,4}^{\left( 3\otimes\right) }
      \otimes\operatorname*{id}\nolimits_{X_{5}} }
       &&&&
      \left[X_1,X_2 ,\left[ X_3,X_4,X_5\right]  \right]
      \ar[d]^{\mathrm{B}_{\operatorname*{Ob}1,2,345}^{\left( 3\otimes\right) }}
       \\  \left[X_3,\left[X_4 , X_1,X_2\right],X_5  \right]
      \ar[d]_{\mathrm{A}_{2,\operatorname*{Ob}3,4,1,2,5}^{\left(  5\otimes\right) }}
       &&&&
      \left[\left[ X_3,X_4,X_5\right],X_2, X_1  \right]
      \ar[d]^{\mathrm{A}_{1,\operatorname*{Ob}3,4,5,2,1}^{\left(  5\otimes\right) }}
      \\\left[X_3,X_4,\left[  X_1,X_2,X_5\right]  \right]
           \ar[drr]_(.3){\operatorname*{id}\nolimits_{X_{3}} 
           \otimes\operatorname*{id}\nolimits_{X_{4}}\otimes
           \mathrm{B}_{\operatorname*{Ob}1,2,5}^{\left( 3\otimes\right) }}
      &&&&
      \left[X_3,\left[X_4,X_5 ,X_2\right] , X_1 \right] 
            \ar[dll]^(.4){\mathrm{A}_{2,\operatorname*{Ob}{3,4,5,2,1}}^{\left(  5\otimes\right) }}
      \\
      &&
       \left[X_3,X_4,\left[ X_5 ,X_2 , X_1\right] \right] 
                  } \label{diag12}
\end{equation} 
commutes.
\end{example}

\begin{conjecture}
[\textsf{Braided} $n$-\textsf{ary coherence}]If the $n$-ary associator
$\mathrm{A}^{\left(  2n-1\right)  \otimes}$ satisfies such $n$-\textit{ary
coherence conditions} that the isomorphism (\ref{mobn}) takes place, and the
$n$-ary braiding $\mathrm{B}^{\left(  n\otimes\right)  }$ satisfies the
polyadic analog of the hexagon identity, then any diagram containing
$\mathrm{A}^{\left(  2n-1\right)  \otimes}$ and $\mathrm{B}^{\left(
n\otimes\right)  }$ commutes.
\end{conjecture}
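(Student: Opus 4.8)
The plan is to follow the classical coherence arguments of Mac Lane \cite{macl63} and Joyal--Street \cite{joy/str1}, transported to the polyadic setting. First I would construct the \emph{free braided polyadic semigroupal category} $\mathtt{F}^{\left(  n\right)  }\left(  S\right)  $ on a discrete category $S$: its objects are the formal $n$-ary parenthesizations of $S$-words of admissible length $k\left(  n-1\right)  +1$, $k\in\mathbb{N}$, and its morphisms are freely generated by formal associators $\mathrm{A}^{\left(  2n-1\right)  \otimes}$ and formal braidings $\mathrm{B}^{\left(  n\otimes\right)  }$ (and the identities (\ref{idn})), subject only to the $n$-ary naturality squares (the diagonals of (\ref{x-com}) applied to $\mathit{M}^{\left(  n\otimes\right)  }$), the $\left(  n^{2}+1\right)  $-gon relations for $\mathrm{A}^{\left(  2n-1\right)  \otimes}$, and the polyadic hexagon relations for $\mathrm{B}^{\left(  n\otimes\right)  }$. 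Any braided polyadic semigroupal category $\left(  \mathcal{C},\mathit{M}^{\left(  n\otimes\right)  },\mathrm{A}^{\left(  2n-1\right)  \otimes},\mathrm{B}^{\left(  n\otimes\right)  }\right)  $ then admits a unique braided $n$-ary functor out of $\mathtt{F}^{\left(  n\right)  }\left(  \operatorname*{Ob}\mathcal{C}\right)  $ by the universal property, so it is enough to prove that in $\mathtt{F}^{\left(  n\right)  }$ any two parallel structural morphisms which induce the same underlying $n$-ary braid coincide; the image of every such diagram then commutes. As in the binary braided case \cite{joy/str1}, ``commutes'' must be read with this proviso, since $\mathrm{B}^{\left(  n\otimes\right)  }$ need not be involutive.

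The first step is the purely associative problem, i.e. the $N$-ary coherence conjecture stated above: every object of $\mathtt{F}^{\left(  n\right)  }$ is linked to a fixed normal form (say the left-nested bracketing) by a composite of $\mathrm{A}^{\left(  2n-1\right)  \otimes}$'s, and any two such composites agree. I would prove this by a confluence/termination argument on the rewriting system whose elementary moves are the $n-1$ associators $\mathrm{A}_{i,\operatorname*{Ob}}^{\left(  2n-1\right)  \otimes}$, $i=1,\ldots,n-1$: local confluence of overlapping rewrites is precisely the commutativity of the $\left(  n^{2}+1\right)  $-gon (the decagon (\ref{diag4}) for $n=3$), and termination follows from a strictly decreasing weight on $n$-ary bracketings, so by Newman's lemma and the $n$-ary analogue of Mac Lane's and Stasheff's reasoning \cite{macl63,stasheff} the associator part of $\mathtt{F}^{\left(  n\right)  }$ is a preorder with a unique arrow onto each normal form. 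Quotienting by the associators yields a \emph{strict} free braided $n$-ary semigroupal category whose objects are plain $S$-words.

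The second step adds the braiding on the strictified model. The polyadic hexagon identity (the decagon (\ref{diag12}) for $n=3$) allows one to rewrite $\mathrm{B}^{\left(  n\otimes\right)  }$ acting on a nested tensor product as a composite of $\mathrm{B}^{\left(  n\otimes\right)  }$'s acting on the constituent blocks, conjugated by associators; together with the naturality of $\mathrm{B}^{\left(  n\otimes\right)  }$ this reduces every structural endomorphism of a word $X_{1}\otimes\cdots\otimes X_{N}$ to a canonical product of ``elementary'' braidings, each realizing one of the admissible permutations $\sigma_{n}\in S_{n}$ on a block of $n$ consecutive letters. The hom-sets of the strict model are thereby identified with a polyadic ``braid-like'' monoid $\mathcal{B}_{N}^{\left(  n\right)  }$ presented by these elementary braidings modulo the hexagon-induced relations, and the commutativity claim becomes the equality of two such words in $\mathcal{B}_{N}^{\left(  n\right)  }$, which holds because the diagram is built only from $\mathrm{A}$ and $\mathrm{B}$ and those are exactly the imposed relations. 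Transporting the conclusion along the universal functor proves the statement.

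The hard part will be in the second step, and it is the reason for stating the result as a conjecture. Once $\mathrm{B}^{\left(  n\otimes\right)  }$ is permitted to realize an \emph{arbitrary} family of permutations $\sigma_{n}\in S_{n}$ (not merely the order-reversing $\sigma_{n}^{\left(  rev\right)  }$ used in \cite{man/sch,kap/voe94}), the polyadic hexagon alone need not generate all relations among the different $\sigma_{n}$-braidings: one must ensure that overlapping blocks of $n$ consecutive letters interact coherently. For $n=2$ this is vacuous, since $S_{2}$ is generated by one transposition and the hexagon (\ref{diag9}) already yields the Yang--Baxter relation (\ref{yb2}); for $n\geq3$ it demands an extra ``braiding compatibility'' axiom that is the polyadic analogue of $\sigma_{i}\sigma_{i+1}\sigma_{i}=\sigma_{i+1}\sigma_{i}\sigma_{i+1}$. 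Isolating the minimal such axiom, checking its independence from (or derivability from) the hexagon and the $\left(  n^{2}+1\right)  $-gon, and then proving that the resulting presentation of $\mathcal{B}_{N}^{\left(  n\right)  }$ is complete (so that $\mathcal{B}_{N}^{\left(  n\right)  }$ has a solvable word problem and embeds into a genuine $n$-ary braid group) is the combinatorial core of the problem. A secondary difficulty is that the associative coherence invoked in the first step is itself conjectural here, so a fully rigorous proof would have to settle both coherence problems simultaneously.
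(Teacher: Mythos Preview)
The statement you are addressing is a \emph{conjecture} in the paper, not a theorem; the paper offers no proof, no sketch, and no further discussion beyond stating it. There is therefore nothing in the paper to compare your argument against.

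What you have written is not a proof but a research plan, and you label it honestly as such. The plan is reasonable and follows the expected template: build the free braided $n$-ary semigroupal category, reduce associative coherence to a confluent rewriting system governed by the $\left(n^{2}+1\right)$-gon, strictify, and then analyze the residual braid-like monoid $\mathcal{B}_{N}^{\left(n\right)}$. You also correctly isolate the two genuine obstructions: (i) the purely associative $n$-ary coherence is itself only conjectural in this paper, so your first step already rests on an unproven statement; (ii) for $n\geq 3$ the polyadic hexagon alone is unlikely to force all the relations needed among overlapping $n$-blocks, so some additional axiom analogous to the braid relation must be identified and shown sufficient. Both points are exactly why the author leaves this as a conjecture. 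Your proposal is thus an accurate diagnosis of what a proof would require, but it does not close either gap, and should not be read as establishing the statement.
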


\begin{proposition}
If the braided semigroupal ternary category $\mathtt{bSGCat}_{3}$ is strict
(the associators becomes equivalences, and we can omit internal brackets),
then the diagram containing only the ternary braidings $\mathrm{B}^{\left(
3\otimes\right)  }$
\begin{equation}
\xymatrix@R+3mm@C-7mm@L+2mm{
      &&  \left[X_1,X_2,X_3 , X_4,X_5 \right] 
      \ar[drr]^(.7){\operatorname*{id}\nolimits_{X_{1}} 
           \otimes\operatorname*{id}\nolimits_{X_{2}}\otimes
           \mathrm{B}_{\operatorname*{Ob}3,4,5}^{\left( 3\otimes\right) }}
      \ar[dll]_(.7){\mathrm{B}_{\operatorname*{Ob}1,2,3}^{\left( 3\otimes\right) }
      \otimes\operatorname*{id}\nolimits_{X_{4}} \otimes\operatorname*{id}\nolimits_{X_{5}} }
      \ar[dddll]^(.5){\mathrm{B}_{\operatorname*{Ob}1,2,345}^{\left( 3\otimes\right) }}
      \\ \left[X_3,X_2 , X_1,X_4,X_5  \right]
      \ar[d]_{\operatorname*{id}\nolimits_{X_{3}} 
           \otimes\operatorname*{id}\nolimits_{X_{2}}\otimes
           \mathrm{B}_{\operatorname*{Ob}1,4,5}^{\left( 3\otimes\right) } }
       &&&&
      \left[X_1,X_2 ,X_5,X_4,X_3  \right]
      \ar[d]^{\operatorname*{id}\nolimits_{X_{1}}\otimes\mathrm{B}_{\operatorname*{Ob}2,5,4}^{\left( 3\otimes\right) }
      \otimes\operatorname*{id}\nolimits_{X_{3}} }
      \ar[dddll]_(.5){\mathrm{B}_{\operatorname*{Ob}1,2,543}^{\left( 3\otimes\right) }}
       \\  \left[X_3,X_2 , X_5,X_4,X_1  \right] \;\;\quad\;\
      \ar[d]_{\operatorname*{id}\nolimits_{X_{3}}\otimes\mathrm{B}_{\operatorname*{Ob}2,5,4}^{\left( 3\otimes\right) }
      \otimes\operatorname*{id}\nolimits_{X_{1}} }
       &&&&
      \;\;\quad\;\;\left[X_1,X_4,X_5,X_2, X_3  \right]
      \ar[d]^{\mathrm{B}_{\operatorname*{Ob}1,4,5}^{\left( 3\otimes\right) }
      \otimes\operatorname*{id}\nolimits_{X_{2}} \otimes\operatorname*{id}\nolimits_{X_{3}} }
      \\\left[X_3,X_4,  X_5,X_2,X_1  \right]
           \ar[drr]_(.3){\mathrm{B}_{\operatorname*{Ob}3,5,4}^{\left( 3\otimes\right) }
      \otimes\operatorname*{id}\nolimits_{X_{2}} \otimes\operatorname*{id}\nolimits_{X_{1}} }
      &&&&
      \left[X_5,X_4,X_1 ,X_2 , X_3 \right] 
            \ar[dll]^(.4){\operatorname*{id}\nolimits_{X_{5}} 
           \otimes\operatorname*{id}\nolimits_{X_{4}}\otimes
           \mathrm{B}_{\operatorname*{Ob}1,2,3}^{\left( 3\otimes\right) }}
      \\
      &&
       \left[X_5,X_4, X_3 ,X_2 , X_1 \right] 
                  } \label{diag13}
\end{equation} 
commutes (cf. the binary braiding
(\ref{diag11})).
\end{proposition}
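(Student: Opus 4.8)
The plan is to reproduce, one arity level up, the argument used in the proof of \textbf{Proposition \ref{prop-b-bin}}: there the Yang--Baxter diagram (\ref{diag11}) was tiled by two instances of the hexagon identity (\ref{diag9}) together with one naturality square of the binary braiding $\mathrm{B}^{\left(  2\otimes\right)  }$. Here the hexagon identity is to be replaced by the decagon identity (\ref{diag12}) and bifunctoriality by the $n$-ary functoriality of $\mathrm{B}^{\left(  3\otimes\right)  }$, i.e.\ the naturality of the $3$-ary transformation (\ref{bmm1}) for $n=3$ (whose $\mathsf{f}$-component is read off the diagonal of a naturality square of type (\ref{x-com})), together with the identity law (\ref{idn}).

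First I would strictify. Since $\mathtt{bSGCat}_{3}$ is strict, both associators $\mathrm{A}_{1}^{\left(  5\otimes\right)  }$ and $\mathrm{A}_{2}^{\left(  5\otimes\right)  }$ are identities, all internal brackets may be dropped, and the decagon identity (\ref{diag12}) collapses to a relation involving only the ternary braidings --- call it the \emph{strict decagon}. For the order-reversing choice $\sigma_{3}=\sigma_{3}^{\left(  rev\right)  }$ of (\ref{br1}) it asserts $\left(  \operatorname{id}\otimes\operatorname{id}\otimes\mathrm{B}_{X_{1},X_{2},X_{5}}^{\left(  3\otimes\right)  }\right)  \circ\left(  \operatorname{id}\otimes\mathrm{B}_{X_{2},X_{1},X_{4}}^{\left(  3\otimes\right)  }\otimes\operatorname{id}\right)  \circ\left(  \mathrm{B}_{X_{1},X_{2},X_{3}}^{\left(  3\otimes\right)  }\otimes\operatorname{id}\otimes\operatorname{id}\right)  =\mathrm{B}_{X_{1},X_{2},X_{3}\otimes X_{4}\otimes X_{5}}^{\left(  3\otimes\right)  }$, and there are two further strict decagons of the same shape obtained by expanding instead the first or the middle tensor slot into a block $X_{i}\otimes X_{j}\otimes X_{k}$ (the ternary analog of the two hexagon variants for $\mathrm{B}_{X,Y_{1}\otimes Y_{2}}$ and $\mathrm{B}_{X_{1}\otimes X_{2},Y}$). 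The long diagonal morphisms $\mathrm{B}_{1,2,345}^{\left(  3\otimes\right)  }$ and $\mathrm{B}_{1,2,543}^{\left(  3\otimes\right)  }$ appearing in (\ref{diag13}) are exactly the ``$\mathrm{B}^{\left(  3\otimes\right)  }$ applied to a blocked triple'' occurring on the right-hand side of these strict decagons --- the $n=3$ counterpart of the diagonals $\mathrm{B}_{1,23}^{\left(  2\otimes\right)  }$, $\mathrm{B}_{1,32}^{\left(  2\otimes\right)  }$ in (\ref{diag11}).

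Then I would decompose the pentagon-shaped diagram (\ref{diag13}): its outer boundary consists of the two length-four composites of ternary braidings, and its interior splits into two outer cells whose boundaries are precisely the strict decagons just described (playing the role of the two triangles in (\ref{diag11})) and an inner cell that commutes by the naturality of $\mathrm{B}^{\left(  3\otimes\right)  }$, namely the instance of the square (\ref{bmm1}) evaluated on the morphism triple $\left(  \operatorname{id}_{X_{1}},\operatorname{id}_{X_{2}},\mathrm{B}_{X_{3},X_{4},X_{5}}^{\left(  3\otimes\right)  }\right)  $ (and its order-reversed partner), which closes because $\mathrm{B}^{\left(  3\otimes\right)  }$ is a natural transformation of the ternary functor $\mathit{M}^{\left(  3\otimes\right)  }$. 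The cells are then glued along their common edges using functoriality of $\mathit{M}^{\left(  3\otimes\right)  }$ (the identity law (\ref{idn}) and the $n$-ary composition/mediality of $\mathsf{f}$-components displayed after (\ref{mf})), so the whole of (\ref{diag13}) commutes.

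The step I expect to be the main obstacle is the purely combinatorial bookkeeping of this tiling: one must check that the boundary paths of the chosen strict decagons match \emph{exactly} the arrows drawn in (\ref{diag13}) (and, with it, the precise number and shape of the intermediate cells), and that the permutations $\sigma_{3}\in S_{3}$ carried by each braiding compose consistently along both routes --- equivalently, that the two sides of (\ref{diag13}) realize one and the same reordering $X_{1}X_{2}X_{3}X_{4}X_{5}\mapsto X_{5}X_{4}X_{3}X_{2}X_{1}$, factored in the two prescribed ways through block reversals in $S_{5}$. This is the ternary analog of checking that the two composites of the Yang--Baxter relation (\ref{yb2}) agree; once it is verified the proposition follows, and, as in the binary case, deleting the object indices in (\ref{diag13}) yields the $n=3$ analog of the braid-group/Yang--Baxter relation. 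A secondary caveat is that a general $n$-ary braiding may realize an arbitrary $\sigma_{n}\in S_{n}$, so the statement is really about the specific braiding (\ref{br1}); for a different $\sigma_{3}$ one uses the correspondingly permuted decagon identity and runs the same tiling argument mutatis mutandis.
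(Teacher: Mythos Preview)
Your proposal is correct and follows essentially the same approach as the paper. The paper's own proof is the single sentence ``This is analogous to (\ref{diag11}),'' which in light of the proof of \textbf{Proposition~\ref{prop-b-bin}} means exactly what you spell out: the outer cells of (\ref{diag13}) commute by the (strictified) decagon identity (\ref{diag12}) playing the role of the hexagon identity, and the inner cell commutes because the ternary braiding $\mathrm{B}^{\left(3\otimes\right)}$ is a natural transformation of the ternary functor $\mathit{M}^{\left(3\otimes\right)}$.
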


\begin{proof}
This is analogous to (\ref{diag11}).
\end{proof}

There follows from (\ref{diag13}), omitting indices, the \textit{ternary braid
group relation} in terms of tensor products (cf. the tetrahedron equation
\cite{baz/str,kap/voe94,bae/neu})%
\begin{align}
&  \left(  {{\mathrm{B}^{\left(  3\otimes\right)  }\otimes\operatorname*{id}%
\otimes\operatorname*{id}}}\right)  \circ\left(  {{{\operatorname*{id}}%
\otimes\mathrm{B}^{\left(  3\otimes\right)  }\otimes\operatorname*{id}}%
}\right)  \circ\left(  {{\operatorname*{id}\otimes\operatorname*{id}%
\otimes\mathrm{B}^{\left(  3\otimes\right)  }}}\right)  \circ\left(
{{\mathrm{B}^{\left(  3\otimes\right)  }\otimes\operatorname*{id}%
\otimes\operatorname*{id}}}\right) \nonumber\\
&  =\left(  {{\operatorname*{id}\otimes\operatorname*{id}\otimes
\mathrm{B}^{\left(  3\otimes\right)  }}}\right)  \circ\left(  {{\mathrm{B}%
^{\left(  3\otimes\right)  }\otimes\operatorname*{id}\otimes\operatorname*{id}%
}}\right)  \circ\left(  {{{\operatorname*{id}}\otimes\mathrm{B}^{\left(
3\otimes\right)  }\otimes\operatorname*{id}}}\right)  \circ\left(
{{\operatorname*{id}\otimes\operatorname*{id}\otimes\mathrm{B}^{\left(
3\otimes\right)  }}}\right)  ,
\end{align}
which was obtained in \cite{dup2018d} using another approach: by the
associative quiver technique from \cite{dup2018a}. For instance, the
$4$\textit{-ary braid group} relation for $4$-ary braiding ${{\mathrm{B}%
^{\left(  4\otimes\right)  }}}$ has the form%
\begin{align}
&  \left(  {{\mathrm{B}^{\left(  4\otimes\right)  }\otimes\operatorname*{id}%
\otimes\operatorname*{id}\otimes\operatorname*{id}}}\right)  \circ\left(
{{\operatorname*{id}{\otimes\mathrm{B}^{\left(  4\otimes\right)  }}%
\otimes{\operatorname*{id}}\otimes\operatorname*{id}}}\right)  \circ\left(
{{\operatorname*{id}\otimes{\operatorname*{id}}\otimes\mathrm{B}^{\left(
4\otimes\right)  }\otimes\operatorname*{id}}}\right)  \circ\left(
{{\operatorname*{id}\otimes\operatorname*{id}\otimes\operatorname*{id}%
\otimes\mathrm{B}^{\left(  4\otimes\right)  }}}\right) \nonumber\\
&  \circ\left(  {{\mathrm{B}^{\left(  4\otimes\right)  }\otimes
\operatorname*{id}\otimes\operatorname*{id}\otimes\operatorname*{id}}}\right)
=\left(  {{\operatorname*{id}\otimes\operatorname*{id}\otimes
\operatorname*{id}\otimes\mathrm{B}^{\left(  4\otimes\right)  }}}\right) \\
&  \circ\left(  {{\mathrm{B}^{\left(  4\otimes\right)  }\otimes
\operatorname*{id}\otimes\operatorname*{id}\otimes\operatorname*{id}}}\right)
\circ\left(  {{\operatorname*{id}\otimes{\operatorname*{id}}\otimes
\mathrm{B}^{\left(  4\otimes\right)  }\otimes\operatorname*{id}}}\right)
\circ\left(  {{\operatorname*{id}{\otimes\mathrm{B}^{\left(  4\otimes\right)
}}\otimes{\operatorname*{id}}\otimes\operatorname*{id}}}\right)  \circ\left(
{{\operatorname*{id}\otimes\operatorname*{id}\otimes\operatorname*{id}%
\otimes\mathrm{B}^{\left(  4\otimes\right)  }}}\right)  .\nonumber
\end{align}

For the non-mixed \textquotedblleft order reversing\textquotedblright\ $n$-ary
braiding (see \textbf{Definition} \ref{def-n-braid}) we have \cite{dup2018d}

\begin{proposition}
The $n$-ary braid equation contains $\left(  n+1\right)  $ multipliers, and
each one acts on $\left(  2n-1\right)  $ tensor products as%
\begin{align}
&  \left(  {{\mathrm{B}^{\left(  n\otimes\right)  }\otimes}}\overset
{n-1}{\overbrace{{{\operatorname*{id}\otimes\ldots\otimes\operatorname*{id}}}%
}}\right)  \circ\left(  {{{\operatorname*{id}\otimes}\mathrm{B}^{\left(
n\otimes\right)  }\otimes}}\overset{n-2}{\overbrace{{{\operatorname*{id}%
\otimes\ldots\otimes\operatorname*{id}}}}}\right)  \circ\left(
{{{\operatorname*{id}\otimes\operatorname*{id}\otimes}\mathrm{B}^{\left(
n\otimes\right)  }\otimes}}\overset{n-3}{\overbrace{{{\operatorname*{id}%
\otimes\ldots\otimes\operatorname*{id}}}}}\right)  \circ\ldots\nonumber\\
&  \circ\left(  \overset{n-2}{\overbrace{{{\operatorname*{id}\otimes
\ldots\otimes\operatorname*{id}}}}}{{{\otimes}\mathrm{B}^{\left(
n\otimes\right)  }\otimes{\operatorname*{id}}}}\right)  \circ\left(
\overset{n-1}{\overbrace{{{\operatorname*{id}\otimes\ldots\otimes
\operatorname*{id}}}}}{{\otimes\mathrm{B}^{\left(  n\otimes\right)  }}%
}\right)  \circ\left(  {{\mathrm{B}^{\left(  n\otimes\right)  }\otimes}%
}\overset{n-1}{\overbrace{{{\operatorname*{id}\otimes\ldots\otimes
\operatorname*{id}}}}}\right) \nonumber\\
&  =\left(  \overset{n-1}{\overbrace{{{\operatorname*{id}\otimes\ldots
\otimes\operatorname*{id}}}}}{{\otimes\mathrm{B}^{\left(  n\otimes\right)  }}%
}\right)  \circ\left(  {{\mathrm{B}^{\left(  n\otimes\right)  }\otimes}%
}\overset{n-1}{\overbrace{{{\operatorname*{id}\otimes\ldots\otimes
\operatorname*{id}}}}}\right)  \circ\left(  {{{\operatorname*{id}\otimes
}\mathrm{B}^{\left(  n\otimes\right)  }\otimes}}\overset{n-2}{\overbrace
{{{\operatorname*{id}\otimes\ldots\otimes\operatorname*{id}}}}}\right)
\circ\ldots\nonumber\\
&  \circ\left(  \overset{n-3}{\overbrace{{{\operatorname*{id}\otimes
\ldots\otimes\operatorname*{id}}}}}{{{\otimes}\mathrm{B}^{\left(
n\otimes\right)  }\otimes{\operatorname*{id}\otimes\operatorname*{id}}}%
}\right)  \circ\left(  \overset{n-2}{\overbrace{{{\operatorname*{id}%
\otimes\ldots\otimes\operatorname*{id}}}}}{{{\otimes}\mathrm{B}^{\left(
n\otimes\right)  }\otimes{\operatorname*{id}}}}\right)  \circ\left(
\overset{n-1}{\overbrace{{{\operatorname*{id}\otimes\ldots\otimes
\operatorname*{id}}}}}{{\otimes\mathrm{B}^{\left(  n\otimes\right)  }}%
}\right)  .
\end{align}

\end{proposition}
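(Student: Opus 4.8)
The plan is to carry the argument of Proposition~\ref{prop-b-bin} up one arity. First I would exploit strictness: once the two five-ary associators $\mathrm{A}_{1}^{\left(  5\otimes\right)  }$ and $\mathrm{A}_{2}^{\left(  5\otimes\right)  }$ are equivalences, every associator arrow in the decagon identity (\ref{diag12}) becomes an identity, and that diagram collapses to an identity \textit{among the ternary braidings alone}, expressing a threefold composite of ``adjacent'' $\mathrm{B}^{\left(  3\otimes\right)  }$'s as the single braiding $\mathrm{B}_{\operatorname*{Ob}1,2,345}^{\left(  3\otimes\right)  }$ (and, for the opposite bracketing of the outermost triple, as $\mathrm{B}_{\operatorname*{Ob}1,2,543}^{\left(  3\otimes\right)  }$). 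These two collapsed decagon identities together with the naturality of $\mathrm{B}^{\left(  3\otimes\right)  }$ are the only ingredients I expect to need, just as (\ref{diag9}) and bifunctoriality were the only ingredients in the binary case.

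Next I would cut the diagram (\ref{diag13}) along its two long diagonal arrows $\mathrm{B}_{\operatorname*{Ob}1,2,345}^{\left(  3\otimes\right)  }$ and $\mathrm{B}_{\operatorname*{Ob}1,2,543}^{\left(  3\otimes\right)  }$ into three cells, exactly as the chords $\mathrm{B}_{\operatorname*{Ob}1,23}^{\left(  2\otimes\right)  }$ and $\mathrm{B}_{\operatorname*{Ob}1,32}^{\left(  2\otimes\right)  }$ cut (\ref{diag11}). The \textit{inner} cell, bounded by the two diagonals and the remaining two edges of (\ref{diag13}), is precisely the naturality square of the functorial isomorphism $\mathit{M}_{\operatorname*{Ob}}^{\left(  3\otimes\right)  }\left[  A,B,C\right]  \overset{\simeq}{\rightarrow}\mathit{M}_{\operatorname*{Ob}}^{\left(  3\otimes\right)  }\left[  C,B,A\right]  $ evaluated at $\operatorname*{id}\nolimits_{X_{1}}\otimes\operatorname*{id}\nolimits_{X_{2}}\otimes\mathrm{B}_{\operatorname*{Ob}3,4,5}^{\left(  3\otimes\right)  }$, namely $\mathrm{B}_{\operatorname*{Ob}1,2,543}^{\left(  3\otimes\right)  }\circ\left(  \operatorname*{id}\nolimits_{X_{1}}\otimes\operatorname*{id}\nolimits_{X_{2}}\otimes\mathrm{B}_{\operatorname*{Ob}3,4,5}^{\left(  3\otimes\right)  }\right)  =\left(  \mathrm{B}_{\operatorname*{Ob}3,4,5}^{\left(  3\otimes\right)  }\otimes\operatorname*{id}\nolimits_{X_{2}}\otimes\operatorname*{id}\nolimits_{X_{1}}\right)  \circ\mathrm{B}_{\operatorname*{Ob}1,2,345}^{\left(  3\otimes\right)  }$; it commutes because $\mathrm{B}^{\left(  3\otimes\right)  }$ is a natural transformation. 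The two \textit{outer} cells --- the ``top-left'' and ``bottom-right'' quadrilaterals running down the two sides of (\ref{diag13}) --- I would identify with instances of the two collapsed decagon identities produced in the first step, the ternary counterparts of the two ``hexagon triangles'' of the binary proof. Pasting the three commuting cells along their shared diagonals shows that (\ref{diag13}) commutes; erasing the object indices from its two boundary composites then yields the ternary braid relation in terms of tensor products, exactly as erasing the indices from (\ref{diag11}) yielded the Yang--Baxter equation (\ref{yb2}).

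The step I expect to be the main obstacle is the identification of the two outer cells with the collapsed decagon. In the binary case a ``hexagon triangle'' has a single intermediate vertex, so this match is immediate; here each outer quadrilateral carries \textit{two} intermediate vertices, and along the boundary of (\ref{diag13}) the two ternary braidings are applied in an order which need not coincide verbatim with the order in which they appear along the decagon path of (\ref{diag12}). Reconciling the two will need, besides the decagon identity itself, one or two additional naturality squares for $\mathrm{B}^{\left(  3\otimes\right)  }$; this is routine diagram chasing but is where the index bookkeeping accumulates, and it may alternatively be subsumed under the Braided $n$-ary coherence conjecture, of which (\ref{diag13}) is a particular instance. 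The same scheme gives the general-$n$ statement: the decagon is replaced by the collapsed ``$\left(  n^{2}+1\right)  $-gon'' and the diagram of the proposition by the consequence diagram whose boundary, with indices erased, is the $n$-ary braid equation.
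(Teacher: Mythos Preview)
Your proposal is correct and follows exactly the route the paper indicates: the paper gives no proof for this general-$n$ proposition beyond the citation to an earlier work \cite{dup2018d}, and for the ternary antecedent (diagram (\ref{diag13})) the entire proof reads ``This is analogous to (\ref{diag11})''. What you have written is precisely that analogy spelled out --- collapsed $n$-ary hexagon identities for the outer cells, a naturality square for the inner cell, then pasting --- so there is nothing to compare; you have simply supplied the details the paper omits.
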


\begin{remark}
If a polyadic category is arity-nonreducible, then the higher $n$-ary braid
relations cannot be \textquotedblleft iterated\textquotedblright, i.e.
obtained from the lower $n$ ones.
\end{remark}

Consider a polyadic monoidal category $\mathtt{MonCat}_{n}$ with one unit
object $E$ (see \textbf{Definition} \ref{def-nmon}). Then the $n$-ary braiding
${{\mathrm{B}^{\left(  n\otimes\right)  }}}$ satisfies the triangle identity
connecting it with the unitors $\mathrm{U}^{\left(  n\otimes\right)  }$.

\begin{example}
In the case of the ternary monoidal category $\mathtt{MonCat}_{3}$ (see
\textit{Example} \ref{exam-mon3}) the \textquotedblleft order
reversing\textquotedblright\ braiding ${{\mathrm{B}^{\left(  3\otimes\right)
}}}$ (\ref{br1}) satisfies an additional triangle identity analogous to
(\ref{diag10}) such that the diagram\begin{equation}
\xymatrix@R+5mm@C+10mm{
    \left[ X, E ,E\right] 
    \ar[dr]_{\mathrm{U}_{\left(  1\right)  \operatorname*{Ob}}^{\left(  3\otimes\right)  }} 
    \ar[rr]^{\mathrm{B}_{\operatorname*{Ob}}^{\left(  3\otimes\right)  }}
      &&  \left[ E,E, X\right] 
            \ar[dl]^{\mathrm{U}_{\left(  3\right)  \operatorname*{Ob}}^{\left(  3\otimes\right)  }} \\
      &  X }
      \label{diag14}
\end{equation} commutes.
\end{example}

For the polyadic non-unital groupal category $\mathtt{GCat}_{n}$ (see
\textbf{Definition} \ref{def-grcat}) the $n$-ary braiding ${{\mathrm{B}%
^{\left(  n\otimes\right)  }}}$ should be consistent with the quertors
$\mathrm{U}^{\left(  n\otimes\right)  }$ and the querfunctor $\mathit{Q}$ (see
\textbf{Definition} \ref{def-q}).

\begin{definition}
A \textit{braided polyadic groupal category} $\mathtt{bGCat}_{n}$ is a
polyadic groupal category $\mathtt{GCat}_{n}$ endowed with the $n$-ary
braiding $\left(  \mathcal{C},\mathit{M}^{\left(  n\otimes\right)
},\mathrm{A}^{\left(  2n-1\right)  \otimes},\mathit{Q},\mathrm{Q}^{\left(
n\otimes\right)  },{{\mathrm{B}^{\left(  n\otimes\right)  }}}\right)  $.
\end{definition}

\begin{example}
In the ternary groupal category $\mathtt{GCat}_{3}$ (see \textit{Example}
\ref{exam-q}) the \textquotedblleft order reversing\textquotedblright%
\ braiding ${{\mathrm{B}^{\left(  3\otimes\right)  }}}$ (\ref{br1}) satisfies
the additional identity of consistency with the querfunctor $\mathit{Q}$ and
quertor $\mathrm{Q}^{\left(  3\otimes\right)  }$ such that the diagram\begin{equation}
\xymatrix@R+5mm@C+10mm@L+2mm{
&\left[ X, X ,X\right]
\ar[dl]_{\mathit{Q}_{\operatorname*{Ob}}
      \otimes\operatorname*{id} \otimes\operatorname*{id}}
\ar[dr]^{\operatorname*{id} \otimes\operatorname*{id}\otimes\mathit{Q}_{\operatorname*{Ob}}}
    \\
    \left[ \bar{X}, X ,X\right] 
    \ar[dr]_{\mathrm{Q}_{\left(  1\right)  \operatorname*{Ob}}^{\left(  3\otimes\right)  }} 
    \ar[rr]^{\mathrm{B}_{\operatorname*{Ob}}^{\left(  3\otimes\right)  }}
      &&  \left[ X,X, \bar{X}\right] 
            \ar[dl]^{\mathrm{Q}_{\left(  3\right)  \operatorname*{Ob}}^{\left(  3\otimes\right)  }} \\
      &  X }
      \label{diag15}
\end{equation} commutes.
\end{example}

The above diagrams ensure that some version of coherence can also be proven
for braided polyadic categories.

\section{\textsc{Medialed polyadic tensor categories}}

Here we consider a medial approach to braiding inspired by the first part of
our paper. As opposed to binary braiding which is defined by one unique
permutation (\ref{b1}), the $n$-ary braiding can be defined by the enormous
number of possible allowed permutations (\ref{bmm1}). Therefore, in most cases
only one permutation, that is the \textquotedblleft order
reversing\textquotedblright, is usually (and artificially) used (see, e.g.,
\cite{man/sch}) ignoring other possible cases. On the other side, for $n$-ary
structures it is natural to use the mediality property (\ref{am}) which is
\textsf{unique} in the $n$-ary case and for binary groups reduces to
commutativity. So we introduce a \textit{medialing} instead of braiding for
the tensor product in categories, and (by analogy with braided categories) we
call them \textit{medialed categories}.

Let $\left(  \mathcal{C},\mathit{M}^{\left(  n\otimes\right)  },\mathrm{A}%
^{\left(  2n-1\right)  \otimes}\right)  $ be a polyadic non-strict semigroupal
category $\mathtt{SGCat}_{n}$ (see \textbf{Definition \ref{def-semicat}}).

\begin{definition}
\label{def-med}An $n$\textit{-ary medialing} $\mathrm{M}^{\left(  n^{2}%
\otimes\right)  }$ (or \textquotedblleft medial braiding\textquotedblright) is
a \textit{mediality constraint} which is a natural (or infra-natural)
transformation of two composed $n$-ary tensor product functors $\mathit{M}%
^{\left(  n\otimes\right)  }$ (or functorial $n^{2}$-isomorphism)%
\begin{equation}
\mathrm{M}_{\operatorname*{Ob}}^{\left(  n^{2}\otimes\right)  }:\mathit{M}%
_{\operatorname*{Ob}}^{\left(  n\otimes\right)  }\left[
\begin{array}
[c]{c}%
\mathit{M}_{\operatorname*{Ob}}^{\left(  n\otimes\right)  }\left[
X_{11},X_{12},\ldots,X_{1n}\right]  ,\\
\mathit{M}_{\operatorname*{Ob}}^{\left(  n\otimes\right)  }\left[
X_{21},X_{22},\ldots,X_{2n}\right]  ,\\
\vdots\\
\mathit{M}_{\operatorname*{Ob}}^{\left(  n\otimes\right)  }\left[
X_{n1},X_{n2},\ldots,X_{nn}\right]
\end{array}
\right]  \overset{\simeq}{\rightarrow}\mathit{M}_{\operatorname*{Ob}}^{\left(
n\otimes\right)  }\left[
\begin{array}
[c]{c}%
\mathit{M}_{\operatorname*{Ob}}^{\left(  n\otimes\right)  }\left[
X_{11},X_{21},\ldots,X_{n1}\right]  ,\\
\mathit{M}_{\operatorname*{Ob}}^{\left(  n\otimes\right)  }\left[
X_{12},X_{22},\ldots,X_{n2}\right]  ,\\
\vdots\\
\mathit{M}_{\operatorname*{Ob}}^{\left(  n\otimes\right)  }\left[
X_{1n},X_{2n},\ldots,X_{nn}\right]
\end{array}
\right]  ,\label{mmx1}%
\end{equation}
where the action on morphisms $\mathit{M}_{\operatorname*{Mor}}^{\left(
n\otimes\right)  }$ can be viewed as the corresponding diagonal in the natural
transformation diagram as in (\ref{x-com}).
\end{definition}

\begin{remark}
\label{rem-advmed}The advantage of $n$-ary medialing is its
\textit{uniqueness}, because it does not contain a huge number of possible
permutations $\sigma_{n}\in S_{n}$ as does the $n$-ary braiding (\ref{bmm1}).
\end{remark}

\begin{example}
In the binary case $n=2$ we have (using the standard notation $\mathit{M}%
^{\left(  2\otimes\right)  }\longrightarrow\otimes$)%
\begin{equation}
\mathrm{M}_{\operatorname*{Ob}}^{\left(  4\otimes\right)  }:\left(
X_{1}\otimes X_{2}\right)  \otimes\left(  X_{3}\otimes X_{4}\right)
\overset{\simeq}{\rightarrow}\left(  X_{1}\otimes X_{3}\right)  \otimes\left(
X_{2}\otimes X_{4}\right)  ,\ \ \forall X_{i}\in\operatorname*{Ob}%
\mathcal{C},\ i=1,\ldots,4,\label{mx1}%
\end{equation}
which is called a \textit{binary medialing} by analogy with binary braiding
(\ref{b1}).
\end{example}

In the compact matrix notation (see \textbf{Definition} \ref{def-matr})
instead of (\ref{mmx1}) we have (symbolically)%
\begin{equation}
\mathrm{M}_{\operatorname*{Ob}}^{\left(  n^{2}\otimes\right)  }:\left(
\mathit{M}_{\operatorname*{Ob}}^{\left(  n\otimes\right)  }\right)
^{2}\left[  \mathbf{\hat{X}}_{\left(  n^{2}\right)  }\right]  \overset{\simeq
}{\rightarrow}\left(  \mathit{M}_{\operatorname*{Ob}}^{\left(  n\otimes
\right)  }\right)  ^{2}\left[  \mathbf{\hat{X}}_{\left(  n^{2}\right)  }%
^{T}\right]  ,
\end{equation}
where the matrix polyads of objects is (cf. (\ref{mna}))%
\begin{equation}
\mathbf{\hat{X}}_{\left(  n^{2}\right)  }=\left(  X_{ij}\right)  \in\left(
\operatorname*{Ob}\mathcal{C}\right)  ^{\otimes n^{2}},\ \ \ \ \ X_{ij}%
\in\operatorname*{Ob}\mathcal{C},
\end{equation}
and $\left(  \ \right)  ^{T}$ is matrix transposition.

\begin{definition}
A \textit{medialed polyadic semigroupal category} $\left(  \mathcal{C}%
,\mathit{M}^{\left(  n\otimes\right)  },\mathrm{A}^{\left(  2n-1\right)
\otimes},\mathrm{M}^{\left(  n^{2}\otimes\right)  }\right)  $ $\mathtt{mSGCat}%
_{n}$ is a polyadic non-strict semigroupal category $\mathtt{SGCat}_{n}$ (see
\textbf{Definition} \ref{def-nonstrscat}) endowed with the $n$-ary medialing
$\mathrm{M}^{\left(  n^{2}\otimes\right)  }$ satisfying the $n$\textit{-ary
medial coherence condition} (a medial analog of the hexagon identity
(\ref{diag9})).
\end{definition}

\begin{definition}
A \textit{medialed polyadic monoidal category} $\left(  \mathcal{C}%
,\mathit{M}^{\left(  n\otimes\right)  },\mathrm{A}^{\left(  2n-1\right)
\otimes},E,\mathrm{U}^{\left(  n\otimes\right)  },\mathrm{M}^{\left(
n^{2}\otimes\right)  }\right)  $ $\mathtt{mMonCat}_{n}$ is a medialed polyadic
semigroupal category $\mathtt{mSGCat}_{n}$ with the unit object $E\in
\operatorname*{Ob}\mathcal{C}$ and the unitor $\mathrm{U}^{\left(
n\otimes\right)  }$ satisfying some compatibility condition.
\end{definition}

Let us consider the polyadic nonunital groupal category $\mathtt{GCat}_{n}$
(see \textbf{Definition} \ref{def-grcat}), then the $n$-ary medialing
$\mathrm{M}^{\left(  n^{2}\otimes\right)  }$ should be consistent with the
quertors $\mathrm{U}^{\left(  n\otimes\right)  }$ and the querfunctor
$\mathit{Q}$ (see \textbf{Definition} \ref{def-q} and also the consistency
condition for the ternary braiding (\ref{diag15})) .

\begin{definition}
A \textit{braided polyadic groupal category} $\left(  \mathcal{C}%
,\mathit{M}^{\left(  n\otimes\right)  },\mathrm{A}^{\left(  2n-1\right)
\otimes},\mathit{Q},\mathrm{Q}^{\left(  n\otimes\right)  },\mathrm{M}%
{{^{\left(  n^{2}\otimes\right)  }}}\right)  $ $\mathtt{mGCat}_{n}$ is a
polyadic groupal category $\mathtt{GCat}_{n}$ endowed with the $n$-ary
medialing $\mathrm{M}^{\left(  n^{2}\otimes\right)  }$.
\end{definition}

\subsection{Medialed binary and ternary categories}

Due to the complexity of the relevant polyadic diagrams, it is not possible to
draw them in a general case for arbitrary arity $n$. Therefore, it would be
worthwhile to consider first the binary case, and then some of the diagrams
for the ternary case.

\begin{example}
Let $\left(  \mathcal{C},\mathit{M}^{\left(  2\otimes\right)  },\mathrm{A}%
^{\left(  3\otimes\right)  },\mathrm{M}^{\left(  4\otimes\right)  }\right)  $
be a binary medialed semigroupal category $\mathtt{mSGCat}_{2}$, and the
binary medialing be in (\ref{mx1}). Then the medial analog of the hexagon
identity (\ref{diag9}) is given by the binary medial coherence condition such
that the diagram
\begin{equation}
\xymatrix@R+10mm@C-1mm@L+1mm{
 \left[ \left[ \left[ \left[X_1,X_2\right],X_3 \right], X_4 \right],X_5\right] 
    \ar[d]^{\mathrm{A}_{\operatorname*{Ob}12,3,4}^{\left(  3\otimes\right) }\otimes
      \operatorname*{id}\nolimits_{X_{5}}}
\ar[rrrr]^{\mathrm{A}_{\operatorname*{Ob}1,2,3}^{\left(  3\otimes\right) }\otimes
      \operatorname*{id}\nolimits_{X_{4}}\otimes\operatorname*{id}\nolimits_{X_{5}}}
&&&& \left[ \left[ \left[ X_1,\left[ X_2,X_3\right] \right], X_4 \right],X_5\right] 
   \ar[d]_{\mathrm{A}_{\operatorname*{Ob}123,4,5}^{\left(  3\otimes\right) }}
\\
 \left[ \left[ \left[X_1,X_2\right], \left[X_3, X_4 \right] \right],X_5\right]
          \ar[d]^{\mathrm{M}_{\operatorname*{Ob}1,2,3,4}^{\left(  4\otimes\right) }\otimes
      \operatorname*{id}\nolimits_{X_{5}} }
        &&&& 
        \left[ \left[  X_1,\left[ X_2,X_3\right] \right],\left[ X_4 ,X_5\right]\right]
      \ar[d]_{\mathrm{M}_{\operatorname*{Ob}1,23,4,5}^{\left(  4\otimes\right) }}
             \\
   \left[ \left[ \left[X_1,X_3\right], \left[X_2, X_4 \right] \right],X_5\right]         
      \ar[d]^{\mathrm{A}_{\operatorname*{Ob}13,24,5}^{\left(  3\otimes\right) }}
           &&&&   
      \left[ \left[  X_1,X_4  \right],\left[\left[ X_2,X_3\right] ,X_5\right]\right]
      \ar[d]_{\mathrm{A}_{\operatorname*{Ob}1,4,235}^{\left(  3\otimes\right) }}
      \\
    \left[ \left[ X_1,X_3\right], \left[\left[X_2, X_4 \right] ,X_5\right] \right]
         \ar[d]^{\mathrm{M}_{\operatorname*{Ob}1,3,24,5}^{\left(  4\otimes\right) }}    
        &&&& 
          \left[  X_1,\left[ X_4  ,\left[\left[ X_2,X_3\right] ,X_5\right]\right]\right]
          \ar[d]_{\operatorname*{id}\nolimits_{X_{1}}\otimes\operatorname*{id}\nolimits_{X_{4}}
         \otimes \mathrm{A}_{\operatorname*{Ob}2,3,5}^{\left(  3\otimes\right) }}
          \\
      \left[ \left[ X_1,\left[X_2, X_4 \right]\right], \left[ X_3 ,X_5\right] \right]
             \ar[d]^{\mathrm{A}_{\operatorname*{Ob}1,2,4}^{\left(  3\otimes\right) -1}\otimes
      \operatorname*{id}\nolimits_{X_{3}}\otimes\operatorname*{id}\nolimits_{X_{5}}}
        &&&& 
          \left[X_1, \left[X_4,\left[ X_2 ,\left[   X_3 ,X_5\right] \right]\right]\right]
          \\
       \left[ \left[\left[ X_1,X_2\right], X_4 \right], \left[ X_3 ,X_5\right] \right]
           \ar[d]^{\mathrm{A}_{\operatorname*{Ob}124,3,5}^{\left(  3\otimes\right)-1 }}   
        &&&& 
          \left[ \left[X_1,X_4\right],\left[  X_2,\left[   X_3 ,X_5 \right]\right]\right] 
          \ar[u]^{\mathrm{A}_{\operatorname*{Ob}1,4,235}^{\left(  3\otimes\right) }}
             \\
     \left[ \left[\left[\left[ X_1,X_2\right], X_4 \right],  X_3\right] ,X_5 \right]
          \ar[d]^{\mathrm{A}_{\operatorname*{Ob}12,4,3}^{\left(  3\otimes\right) }\otimes
      \operatorname*{id}\nolimits_{X_{5}}}   
        &&&& 
        \left[ \left[X_1,X_4\right],\left[\left[  X_2 ,  X_3\right] ,X_5 \right]\right]    
          \ar[u]^{\operatorname*{id}\nolimits_{X_{1}}\otimes\operatorname*{id}\nolimits_{X_{4}}
         \otimes \mathrm{A}_{\operatorname*{Ob}2,3,5}^{\left(  3\otimes\right) }}
             \\
    \left[ \left[\left[X_1,X_2\right],\left[  X_4 ,  X_3\right]\right] ,X_5 \right]
          \ar[rrrr]^(0.55){\mathrm{M}_{\operatorname*{Ob}1,2,4,3}^{\left(  4\otimes\right) }\otimes
      \operatorname*{id}\nolimits_{X_{5}} }   
      &&&&
      \left[ \left[\left[X_1,X_4\right],\left[  X_2 ,  X_3\right]\right] ,X_5 \right]
       \ar[u]^{\mathrm{A}_{\operatorname*{Ob}14,23,5}^{\left(  3\otimes\right) }}
       }
       \label{diag16}
\end{equation} commutes.
\end{example}

If a medialed semigroupal category $\mathtt{mSGCat}_{2}$ contains a unit
object and the unitor, then we have

\begin{definition}
A \textit{medialed monoidal category} $\mathtt{mMonCat}_{2}$ $\left(
\mathcal{C},\mathit{M}^{\left(  2\otimes\right)  },\mathrm{A}^{\left(
3\otimes\right)  },E,\mathrm{U}^{\left(  2\otimes\right)  },\mathrm{M}%
^{\left(  4\otimes\right)  }\right)  $ is a (\textit{binary}) medialed
semigroupal category $\mathtt{mSGCat}_{2}$ together with a unit object
$E\in\operatorname*{Ob}\mathcal{C}$ and a unitor $\mathrm{U}^{\left(
2\otimes\right)  }$ (\ref{lr1})--(\ref{lr2}) satisfying the triangle axiom
(\ref{diag2}).
\end{definition}

For $\mathtt{mMonCat}_{2}$ the compatibility condition of the medialing
$\mathrm{M}^{\left(  4\otimes\right)  }$ with $E$ and $\mathrm{U}^{\left(
2\otimes\right)  }$ is given by the commutative diagram\begin{equation}
\xymatrix@R+12mm@C+6mm@L+1mm{
\left[ \left[ X_1, E \right], \left[ X, X_2 \right] \right]
\ar[rr]^{\mathrm{M}_{\operatorname*{Ob}}^{\left(  4\otimes\right)  }}
\ar[d]_{\mathrm{A}_{\operatorname*{Ob} X_1, E,X X_2}^{\left(  3\otimes\right)  }}
&&\left[ \left[ X_1, X \right], \left[ E, X_2 \right] \right]
\ar[d]^{\mathrm{A}_{\operatorname*{Ob} X_1, X, E X_2}^{\left(  3\otimes\right)  }}
\\
\left[  X_1,\left[ E , \left[ X, X_2 \right] \right]\right]
\ar[d]_{\operatorname*{id}\nolimits_{X_1}\otimes\mathrm{A}_{\operatorname*{Ob}}^{\left(  3\otimes\right) -1  }}
&& \left[  X_1,\left[ X , \left[ E, X_2 \right] \right]\right]
\ar[d]^{\operatorname*{id}\nolimits_{X_1}\otimes\mathrm{A}_{\operatorname*{Ob}}^{\left(  3\otimes\right) -1  }}
\\
\left[  X_1,\left[\left[ E ,  X\right] , X_2 \right] \right]
    \ar[dr]_{ \operatorname*{id}\nolimits_{X_1}\otimes\mathrm{U}_{\left(  2\right)  \operatorname*{Ob}}^{\left(  2\otimes\right)  }
    \otimes \operatorname*{id}\nolimits_{X_2}} 
      &&  \left[  X_1,\left[\left[ X ,  E\right] , X_2 \right] \right]
            \ar[dl]^{ \operatorname*{id}\nolimits_{X_1}\otimes\mathrm{U}_{\left(  1\right)  \operatorname*{Ob}}^{\left(  2\otimes\right)  }
    \otimes \operatorname*{id}\nolimits_{X_2}} \\
      & \left[  X_1,\left[ X, X_2 \right] \right]
      }
      \label{diag18}
\end{equation} which
is an analog of the triangle diagram for braiding (\ref{diag10}).

\begin{example}
In the ternary nonunital groupal category $\mathtt{GCat}_{3}$ (see
\textit{Example} \ref{exam-q}) the medialing $\mathrm{M}{{^{\left(
9\otimes\right)  }}}$ satisfies the additional identity of consistency with
the querfunctor $\mathit{Q}$ and quertor $\mathrm{Q}^{\left(  3\otimes\right)
}$ such that the diagram
\begin{equation}
\xymatrix@R+4mm@C-15mm@L+4.5mm{
&& \left[ \left[ X,X,X \right], \left[ X,X,X \right], \left[ X,X,X \right]\right] 
    \ar[dll]_(.7){\operatorname*{id} \otimes\mathit{Q}_{\operatorname*{Ob}}\otimes\mathit{Q}_{\operatorname*{Ob}}
      \otimes\operatorname*{id} \otimes\operatorname*{id}\otimes\mathit{Q}_{\operatorname*{Ob}}
      \otimes\operatorname*{id}\otimes\operatorname*{id}\otimes\operatorname*{id}}
\ar[drr]^(.7){\operatorname*{id} \otimes\operatorname*{id}\otimes\operatorname*{id}
      \otimes\mathit{Q}_{\operatorname*{Ob}} \otimes\operatorname*{id}\otimes\operatorname*{id}
      \otimes\mathit{Q}_{\operatorname*{Ob}}\otimes\mathit{Q}_{\operatorname*{Ob}}\otimes\operatorname*{id}}
\\
 \left[ \left[ X,\bar{X},\bar{X} \right], \left[ X,X,\bar{X}\right], \left[ X,X,X \right]\right] 
      \ar[rrrr]^{\mathrm{M}_{\operatorname*{Ob}}^{\left(  9\otimes\right)  }}
      \ar[d]^(.4){\operatorname*{id}\otimes\operatorname*{id}\otimes\operatorname*{id}
      \otimes\mathrm{Q}_{\left(  3\right)  \operatorname*{Ob}}^{\left(  3\otimes\right)  }
      \otimes\operatorname*{id}\otimes\operatorname*{id}\otimes\operatorname*{id}}
      &&&&  \left[ \left[ X,X,X \right], \left[ \bar{X},X,X \right], \left[ \bar{X},\bar{X},X \right]\right]
      \ar[d]_(.6){\operatorname*{id}\otimes\operatorname*{id}\otimes\operatorname*{id}
      \otimes\mathrm{Q}_{\left(  1\right)  \operatorname*{Ob}}^{\left(  3\otimes\right)  }
      \otimes\operatorname*{id}\otimes\operatorname*{id}\otimes\operatorname*{id}}
             \\
      \left[ \left[ X,\bar{X},\bar{X} \right],X, \left[ X,X,X \right]\right] 
      \ar[d]^{\mathrm{A}_{1,\operatorname*{Ob}X,\bar{X},\bar{X},X,X X X}^{\left(  5\otimes\right)  }}
      &&&&   
      \left[ \left[ X,X,X \right], X  \left[ \bar{X},\bar{X},X \right]\right]
      \ar[d]_{\mathrm{A}_{2,\operatorname*{Ob}X X X,X,\bar{X},\bar{X},X}^{\left(  5\otimes\right)-1  }
      }
      \\
         \left[  X,\left[ \bar{X},\bar{X} ,X\right] , \left[ X,X,X \right]\right]  
         \ar[d]^{\mathrm{A}_{2,\operatorname*{Ob}X,\bar{X},\bar{X},X,X X X}^{\left(  5\otimes\right)  }}    
        &&&& 
          \left[\left[ X,X,X \right], \left[ X, \bar{X},\bar{X}\right] ,X \right]
          \ar[d]_{\mathrm{A}_{1,\operatorname*{Ob}X X X,X,\bar{X},\bar{X},X}^{\left(  5\otimes\right)-1 }}
          \\
          \left[  X, \bar{X},\left[\bar{X} ,X , \left[ X,X,X \right]\right] \right]  
             \ar[d]^{\operatorname*{id}\otimes\operatorname*{id}\otimes
             \mathrm{A}_{2,\operatorname*{Ob}}^{\left(  5\otimes\right)-1  }}
        &&&& 
          \left[\left[\left[  X,X,X \right]  X,\bar{X},\right] ,\bar{X} ,X \right]
          \ar[d]_{\mathrm{A}_{1,\operatorname*{Ob}}^{\left(  5\otimes\right)  }
          \otimes\operatorname*{id}\otimes\operatorname*{id}}
           \\
          \left[  X, \bar{X},\left[\bar{X} ,\left[ X , X,X \right] ,X\right] \right] 
           \ar[d]^{\operatorname*{id}\otimes\operatorname*{id}\otimes
             \mathrm{A}_{1,\operatorname*{Ob}}^{\left(  5\otimes\right)-1  }}   
        &&&& 
          \left[\left[  X,\left[X,X  , X\right],\bar{X}\right] ,\bar{X} ,X \right]
          \ar[d]_{\mathrm{A}_{2,\operatorname*{Ob}}^{\left(  5\otimes\right)  }
          \otimes\operatorname*{id}\otimes\operatorname*{id}}
             \\
          \left[  X, \bar{X},\left[\left[\bar{X} , X , X\right],X  ,X\right] \right]  
          \ar[d]^{\operatorname*{id}\otimes\operatorname*{id}
      \otimes\mathrm{Q}_{\left(  1\right)  \operatorname*{Ob}}^{\left(  3\otimes\right)  }
      \otimes\operatorname*{id}\otimes\operatorname*{id}}   
        &&&& 
          \left[\left[  X,X,\left[X  , X,\bar{X}\right]\right] ,\bar{X} ,X \right]
          \ar[d]_{\operatorname*{id}\otimes\operatorname*{id}
      \otimes\mathrm{Q}_{\left(  3\right)  \operatorname*{Ob}}^{\left(  3\otimes\right)  }
      \otimes\operatorname*{id}\otimes\operatorname*{id}}
             \\
          \left[  X, \bar{X},\left[ X , X  ,X\right] \right]  
          \ar[d]^{\mathrm{A}_{2,\operatorname*{Ob}}^{\left(  5\otimes\right)-1  }}   
        &&&& 
          \left[\left[  X,X, X\right] ,\bar{X} ,X \right]
           \ar[d]_{\mathrm{A}_{1,\operatorname*{Ob}}^{\left(  5\otimes\right)  }}
             \\
          \left[  X, \left[\bar{X}, X , X \right]  ,X \right]     
          \ar[drr]_(.4){\operatorname*{id}
      \otimes\mathrm{Q}_{\left(  1\right)  \operatorname*{Ob}}^{\left(  3\otimes\right)  }\otimes\operatorname*{id}}
        &&&& 
           \left[  X,\left[ X, X ,\bar{X}\right] ,X \right]
          \ar[dll]^(.4){\operatorname*{id}
      \otimes\mathrm{Q}_{\left(  3\right)  \operatorname*{Ob}}^{\left(  3\otimes\right)  }\otimes\operatorname*{id}}
             \\
      && \left[ X,X,X \right] }\label{diag19}
\end{equation} commutes. An analog of the hexagon
identity in $\mathtt{GCat}_{3}$ can be expressed by a diagram which is similar
to (\ref{diag16}).
\end{example}

\newpage

\section{\textsc{Conclusions}}

Commutativity in polyadic algebraic structures is defined non-uniquely, if
consider permutations and their combinations. We proposed a canonical way out:
to substitute the commutativity property by mediality. Following this
\textquotedblleft commutativity-to-mediality\textquotedblright\ ansatz we
first investigated mediality for graded linear $n$-ary algebras and arrived at
the concept of almost mediality, which is an analog of almost commutativity.
We constructed \textquotedblleft deforming\textquotedblright\ medial brackets,
which could be treated as a medial analog of Lie brackets. We then proved
Toyoda's theorem for almost medial $n$-ary algebras. Inspired by the above as
examples, we proposed generalizing tensor and braided categories in a similar
way. We defined polyadic tensor categories with an additional $n$-ary tensor
multiplication for which a polyadic analog of the pentagon axiom was given.
Instead of braiding we introduced $n$-ary \textquotedblleft
medialing\textquotedblright\ which satisfies a medial analog of the hexagon
identity, and constructed the \textquotedblleft medialed\textquotedblright%
\ polyadic version of tensor categories. More details and examples will be
presented in a forthcoming paper.

\bigskip

\noindent \textbf{Acknowledgements}. The author would like to express his deep thankfulness
to Andrew James Bruce, Grigorij Kurinnoj, Mike Hewitt, Richard Kerner, Maurice Kibler,
Dimitry Leites, Yuri Manin, Thomas Nordahl, Valentin Ovsienko, Norbert Poncin,
Vladimir Tkach, Raimund Vogl, Alexander Voronov, and Wend Werner for numerous
fruitful discussions and valuable support.


\end{document}